\newcommand{\psh}[2]{\langle #1 \, , #2 \rangle}
\newcommand{\sumlim}[2]{\sum\limits_{#1}^{#2}}
\newcommand{\itg}[4]{\int_{#1}^{#2} #3 \, \mathrm{d}#4}
\newcommand{\R}{\mathbb R}
\newcommand{\N}{\mathbb N}
\newcommand{\Z}{\mathbb Z}
\newcommand{\br}{{\bf r}}
\newcommand{\cK}{{\mathcal K}}
\newcommand{\cO}{{\mathcal O}}
\newcommand{\cE}{{\mathcal E}}
\newcommand{\cR}{{\mathcal R}}
\providecommand{\keywords}[1]
{
  \small	
  \textbf{Keywords.} #1
}
\title{Variational projector-augmented wave method: a full-potential approach for electronic structure calculations in solid-state physics}
\author{Mi-Song Dupuy\thanks{Zentrum Mathematik, Technische Universit\"at M\"unchen, Boltzmannstra\ss e 3, 85747 Garching, Germany \\
(Email: \url{dupuy@ma.tum.de})}}
\begin{document}
\numberwithin{equation}{section}
\newtheorem{theo}{Theorem}[section]
\newtheorem{prop}[theo]{Proposition}
\newtheorem{note}[theo]{Remark}
\newtheorem{lem}[theo]{Lemma}
\newtheorem{cor}[theo]{Corollary}
\newtheorem{definition}[theo]{Definition}
\newtheorem{assumption}{Assumption}

\maketitle

\begin{abstract}
\begin{small}
In solid-state physics, energies of crystals are usually computed with a plane-wave discretization of Kohn-Sham equations. 
However the presence of Coulomb singularities requires the use of large plane-wave cut-offs to produce accurate numerical results. 
In this paper, an analysis of the plane-wave convergence of the eigenvalues of periodic linear Hamiltonians with Coulomb potentials using the variational projector-augmented wave (VPAW) method is presented. 
In the VPAW method, an invertible transformation is applied to the original eigenvalue problem, acting locally in balls centered at the singularities. In this setting, a generalized eigenvalue problem needs to be solved using plane-waves.
We show that cusps of the eigenfunctions of the VPAW eigenvalue problem at the positions of the nuclei are significantly reduced. 
These eigenfunctions have however a higher-order derivative discontinuity at the spheres centered at the nuclei.
By balancing both sources of error, we show that the VPAW method can drastically improve the plane-wave convergence of the eigenvalues with a minor additional computational cost. 
Numerical tests are provided confirming the efficiency of the method to treat Coulomb singularities. 
\end{small}
\end{abstract}

\keywords{Eigenvalue problems, Spectral method, Error analysis, Weighted Sobolev space.}

\section*{Introduction}

In solid-state physics, the electronic properties of a material are given by the bottom of the spectrum of a Kohn-Sham Hamiltonian. 
The spectrum of this Hamiltonian can be approximated by solving eigenvalue problems arising from the Bloch transform.
To take advantage of the periodicity of the system,  
a plane-wave discretization is a natural choice. 
However the Coulomb potentials in the Hamiltonian considerably impede the convergence rate of Fourier methods because of the cusps \cite{kato1957eigenfunctions} of the eigenfunctions located at each nucleus.

Two approaches to deal with the Coulomb singularities can be distinguished. 
In the pseudopotential approach, the potential is regularized in a neighborhood of each nucleus, using a process which preserves the bottom of the spectrum of specific Hamiltonians (see \cite{cances2016existence,dupuy2018analysis} for more details on the generation of pseudopotentials).
In doing so, a modelling error is introduced which is rarely estimated.
Another way to handle the singularities is to use a different basis functions. 
Since the Coulomb potentials are point singularities, one can modify the plane-wave basis locally around the nuclei and use functions that are less sensitive to those singularities.
This is the main idea of the augmented plane-wave (APW) method \cite{slater1937wave} or of the linearised augmented plane-wave (LAPW) method \cite{koelling1975use}. 
As opposed to the pseudopotential approach, these methods dealing directly with Coulomb singularities are called \emph{full-potential} approach. 
The APW and LAPW methods can be reinterpreted as discontinuous Galerkin methods 
and variants of these methods have been studied in \cite{chen2015numerical,li2019discontinuous}.

The purpose of this paper is to provide an analysis of the variational projector-augmented wave (VPAW) method for the plane-wave discretization of periodic linear Hamiltonians with Coulomb potentials. 
The method has been introduced in \cite{blanc2017}. 
Its efficiency for the plane-wave discretization of one-dimensional toy model with Dirac potentials has been shown \cite{blanc2017vpaw1d}.
The VPAW method is based on the projector-augmented wave (PAW) method \cite{blochl94} which has become a state-of-the-art method in computational solid-state physics and implemented in several popular electronic structure simulation codes (Abinit \cite{torrent2008337}, VASP \cite{kresse1996efficient}).
The idea of the PAW method is to modify the original eigenvalue problem by an invertible transformation. 
This transformation acts locally in the neighborhood of each nucleus and maps atomic wave functions to smooth functions called pseudo wave functions. 
Moreover the PAW treatment allows for the use of pseudopotentials without any approximation. 
In practice however, the PAW method yields equations with infinite expansions that have to be truncated.
This induces an error that has been analyzed in \cite{blanc2017paw} for the same one-dimensional model mentioned previously.
In the VPAW method, the invertible transformation is a finite sum of operators, hence no truncation is needed and no approximation is made. 
Although Coulomb singularities are still present in the equations that are discretized, an acceleration of the plane-wave convergence can be proved.

The paper is organized as follows. 
The VPAW method applied to a periodic linear Hamiltonian is presented in Section~\ref{sec:vpaw_method}. 
Estimates on the eigenvalues of the plane-wave discretization of
the VPAW equations are given in Section~\ref{sec:main_theorem}. 
Numerical tests confirming the efficiency of the VPAW method on a simple model can be found in Section~\ref{sec:numerique}. 
Proofs of the acceleration of convergence are gathered in Section~\ref{sec:proofs}.

\section{The VPAW method}
\label{sec:vpaw_method}

\subsection{The eigenvalue problem}

For simplicity, we restrict ourselves to the linear model. A quick overview of the spectral theory of periodic
Hamiltonians can be found in \cite{gontier2015mathematical}. More thoroughful expositions of this theory are presented
in \cite{eastham1973spectral,kuchment2012floquet}. For extensions to nonlinear equations, the interested reader is
referred to \cite{catto2002some} for the Hartree model and to \cite{catto2001thermodynamic} for the Hartree-Fock model. 

The crystal is modelled as an infinite periodic motif of $N_{at}$ point charges at positions $\mathbf{R}_I$ in the unit cell 
$$
\Gamma = \left\{\alpha_1 \mathbf{a}_1 + \alpha_2 \mathbf{a}_2 + \alpha_3 \mathbf{a}_3, (\alpha_1, \alpha_2, \alpha_3) \in [-1/2, 1/2)^3 \right\}.
$$
 and repeated over a periodic lattice 
$$
\mathcal{R} = \Z \mathbf{a}_1 + \Z  \mathbf{a}_2 + \Z  \mathbf{a}_3.
$$
where $\mathbf{a}_1,\mathbf{a}_2,\mathbf{a}_3$ are linearly independent vectors of $\R^3$.  

The electronic properties of the crystal are determined by the spectrum of the periodic Hamiltonian $H_\mathrm{per}$ acting on $L^2(\R^3)$:
\begin{equation}
H_\mathrm{per} = -\frac{1}{2} \Delta + V_\mathrm{per} + W_\mathrm{per} ,
\end{equation}
where $V_\mathrm{per}$ is an $\mathcal{R}$-periodic potential defined by 
\begin{equation}
\label{eq:Vper}
\begin{cases}
-\Delta V_\mathrm{per} = 4\pi \left( \sumlim{\mathbf{T} \in \mathcal{R}}{} \sumlim{I=1}{N_{at}} Z_I \left(\delta_{\mathbf{R}_I} (\cdot + \mathbf{T}) - \frac{1}{|\Gamma|}\right) \right) \\
V_\mathrm{per} \text{ is } \mathcal{R}\text{-periodic}.
\end{cases}
\end{equation}
In this paper, $W_\mathrm{per}$ is a smooth $\mathcal{R}\text{-periodic}$ potential so that Equation \eqref{eq:Vper} has a solution. In practice, $W_\mathrm{per}$ is a nonlinear potential depending on the model chosen to describe the electronic self-interaction (typically a Kohn-Sham potential). 
\newline

The standard way to study the spectrum of $H_\mathrm{per}$ is through Bloch theory which will be outlined in the next few lines. Let $\mathcal{R}^*$ be the dual lattice 
$$
\mathcal{R}^* = \Z \mathbf{a}_1^* + \Z \mathbf{a}_2^* + \Z \mathbf{a}_3^*, 
$$
where  $(\mathbf{a}_1^*, \mathbf{a}_2^*,\mathbf{a}_3^*)$ satisfies $\mathbf{a}_i \cdot \mathbf{a}_j^* = 2\pi \delta_{ij}$. The reciprocal unit cell is defined by 
$$
\Gamma^* = \left\{\alpha_1 \mathbf{a}_1^* + \alpha_2 \mathbf{a}_2^* + \alpha_3 \mathbf{a}_3^*, (\alpha_1, \alpha_2, \alpha_3) \in [-1/2, 1/2)^3 \right\}.
$$
As $H_\mathrm{per}$ commutes with $\mathcal{R}$-translations, $H_\mathrm{per}$ admits a Bloch decomposition in operators $H_\mathbf{q}$ acting on 
$$
L^2_\mathrm{per}(\Gamma) = \{ f \in L^2_\mathrm{loc}(\R^3) \ | \ f \ \text{is } \mathcal{R}\text{-periodic}\},
$$ 
with domain 
$$
H^2_\mathrm{per}(\Gamma) = \{ f \in H^2_\mathrm{loc}(\R^3) \ | \ f \ \text{is } \mathcal{R}\text{-periodic}\}.
$$
The operator $H_\mathbf{q}$ is given by:
\[
H_\mathbf{q} = \frac{1}{2} |-i\nabla + \mathbf{q}|^2 + V_\mathrm{per} + W_\mathrm{per}, \quad \mathbf{q} \in \Gamma^*.
\]
For each $\mathbf{q} \in \Gamma^*$, the operator $H_\mathbf{q}$ is self-adjoint, bounded below and with compact resolvent. Thus it has a discrete spectrum of infinite eigenvalues $E_{1,\mathbf{q}} \leq E_{2,\mathbf{q}} \leq \dots \leq E_{n, \mathbf{q}} \to \infty$, counted with multiplicities, and the associated eigenfunctions $(\psi_{n, \mathbf{q}})_{n \in \N^*}$ form an orthonormal basis of $L^2_\mathrm{per}(\Gamma)$:
\begin{equation}
\label{eq:H_per}
H_\mathbf{q} \psi_{n,\mathbf{q}} = E_{n,\mathbf{q}} \psi_{n,\mathbf{q}}.
\end{equation}

By Bloch theorem \cite[Chapter XIII]{reed1978iv}, the spectrum of $H_\mathrm{per}$ is given by the union of the discrete spectra of an infinite number of eigenvalue problems parameterized by the vector $\mathbf{q}$ belonging to the reciprocal unit cell $\Gamma^*$:
\begin{equation}
\sigma (H_\mathrm{per} ; L^2(\R^3)) = \bigcup\limits_{\mathbf{q} \in \Gamma^*} \sigma ( H_\mathbf{q} ; L^2_\mathrm{per}(\Gamma)). 
\end{equation}

The VPAW method aims to ease the resolution of the eigenvalue problem \eqref{eq:H_per}. For clarity, we will only present the case $\mathbf{q}=0$ and denote $H_0$ by $H$ as this example contains all the main difficulties encountered in the numerical resolution of Equation \eqref{eq:H_per}.

\subsection{The VPAW method for solids}
\label{subsec:vpaw_method_for_solids}

Following the idea of the PAW method, an invertible transformation $(\mathrm{Id}+T)$ is applied to the eigenvalue problem \eqref{eq:H_per}, where $T$ is the sum of operators $T_I$ acting locally around each nucleus. For each operator $T_I$, two parameters $N_\mathrm{paw}$ and ${r_c}$ need to be fixed ($r_c$ and $N_\mathrm{paw}$ may depend on the atomic site $I$):
\begin{enumerate}
\item $N_\mathrm{paw}$ is the number of PAW functions used to build $T_I$,
\item ${r_c}$ is a cut-off radius which will set the acting domain of $T_I$, more precisely: 
\begin{itemize}
\item for all $f \in L_\mathrm{per}^2(\Gamma)$, $\mathrm{supp}(T_If) \subset \bigcup\limits_{\mathbf{T} \in \mathcal{R}} B(\mathbf{R}_I + \mathbf{T},{r_c})$, where $B(\mathbf{R},r)$ is the closed ball of $\R^3$ with center $\mathbf{R}$ and radius $r$,
\item if $\mathrm{supp}(f) \bigcap \bigcup\limits_{\mathbf{T} \in \mathcal{R}} B(\mathbf{R}_I + \mathbf{T},{r_c}) = \emptyset$, then $T_If = 0$.
\end{itemize} 
\end{enumerate}
The operator $T_I$ is given by:
\begin{equation}
\label{eq:T_0}
T_I = \sumlim{\mathbf{T} \in \mathcal{R}}{} \sum\limits_{k=1}^{N_\mathrm{paw}} (\phi^I_{k}(\mathbf{r} - \mathbf{R}_I) - \widetilde{\phi}^I_{k}(\mathbf{r} - \mathbf{R}_I)) \psh{\widetilde{p}^I_{k}(\cdot - \mathbf{R}_I)}{\bm \cdot},
\end{equation}
where $\psh{\bm \cdot}{\bm \cdot}$ is the $L^2$-scalar product on the unit cell $\Gamma$ and the functions $\phi^I_{k}$, $\widetilde{\phi}^I_k$ and $\widetilde{p}^I_k$ are functions in $L^2_\mathrm{per}(\Gamma)$. The PAW functions $(\phi_k^I)_{1 \leq k \leq N_\mathrm{paw}}$, $(\widetilde{\phi}_k^I)_{1 \leq k \leq N_\mathrm{paw}}$ and $(\widetilde{p}_k^I)_{1 \leq k \leq N_\mathrm{paw}}$ must satisfy the following essential properties:
\begin{enumerate}
\item $\mathrm{supp}\,(\phi_k^I - \widetilde{\phi}_k^I) \subset \bigcup\limits_{\mathbf{T} \in \mathcal{R}} B(\mathbf{T},{r_c}) $;
\item $\widetilde{\phi}_k^I$ restricted to $B(0,{r_c})$ is smooth;
\item $\widetilde{p}_k^I$ are supported in $\bigcup\limits_{\mathbf{T} \in \mathcal{R}} B(\mathbf{T},{r_c})$ and $\forall \, 1 \leq i,j \leq N_\mathrm{paw}, \  \psh{\widetilde{p}^I_i}{\widetilde{\phi}^I_j} = \delta_{ij}$ (\emph{i.e.} $(\widetilde{p}^I_j)_{1 \leq j \leq N_\mathrm{paw}}$ is dual to $(\widetilde{\phi}^I_j)_{1 \leq j \leq N_\mathrm{paw}}$).
\end{enumerate}
The operators $T_I$ act locally in  $\bigcup\limits_{\mathbf{T} \in \mathcal{R}} B(\mathbf{R}_I + \mathbf{T},{r_c})$. 

Several schemes exist in the literature to generate the PAW functions. In this paper, the PAW functions are close to the Vanderbilt scheme \cite{kresse99} where only the projector functions differ from ours. The Bl\"ochl scheme \cite{blochl94} is another popular way to generate PAW functions although the first seems to be preferred \cite{jollet2014generation}. See \cite{blanc2017vpaw1d,jollet2014generation} for more details on the generation of the PAW functions. 

\paragraph{Atomic wave function} Let $(\varphi^I_{k})_{1 \leq k \leq N_\mathrm{paw}} \in (L^2(\R^3))^{N_\mathrm{paw}}$ be eigenfunctions of an atomic \emph{non-periodic} Hamiltonian 
$$
H_I \varphi^I_k = \epsilon_k \varphi^I_k , \quad \epsilon^I_1 \leq \epsilon^I_2 \leq \epsilon^I_3 \leq \dots, \quad \int_{\mathbb{R}^3} \varphi^I_k \varphi^I_{k'} = \delta_{kk'},
$$
with $H_I$ defined by
\begin{equation}
\label{eq:atomic_hamiltonian}
H_I = -\frac{1}{2} \Delta - \frac{Z_I}{|\mathbf{r}|} + W_\mathrm{at}(|\mathbf{r}|),
\end{equation}
where $W_\mathrm{at}$ is a smooth bounded potential. The operator $H_I$ is
self-adjoint on $L^2(\R^3)$ with domain $H^2(\R^3)$. Again, in practice, $W_\mathrm{at}$ is a radial nonlinear potential belonging to the same family of models as $W_\mathrm{per}$ in Equation \eqref{eq:Vper}. Since the atomic Hamiltonian is rotationnaly invariant, $H_I$ is block-diagonal in the decomposition of $L^2(\R^3)$ associated with the eigenspaces of the operator $\mathbf{L}^2$ (the square of the angular momentum $\mathbf{L} = \mathbf{r} \times \mathbf{p} = \mathbf{r} \times (-i \nabla)$). The eigenfunctions $\varphi_k^I$ can be decomposed into a radial function and a spherical harmonics (see \cite[Chapter XIII.3.B]{reed1978iv}  for further details):
\begin{equation}
\label{eq:radial_phi}
\varphi_{k}^I(\mathbf{r}) = r^\ell R_{n \ell}(r) Y_{\ell m}(\hat{\mathbf{r}}),
\end{equation}
where $Y_{\ell m}$ is the real spherical harmonics and $R_{n\ell}$ is a continuous function such that $\lim\limits_{r \to 0} |R_{n\ell}(r)| < \infty$. For $\mathbf{r} \in \R^3$, we define $\hat{\mathbf{r}} := \frac{\mathbf{r}}{|\mathbf{r}|}$ and when there is no ambiguity we will denote by $r$ the euclidean norm of $\mathbf{r}$.
The decomposition \eqref{eq:radial_phi} also holds for some nonlinear models, see \cite{Solovej1991,cances2014mathematical}. The functions $R_{n\ell}$ satisfies the following radial Schr\"odinger equation 
\begin{equation}
\label{eq:radial_schrodinger}
\mathfrak{h}_\ell R_{n\ell}(r) =  -\frac{1}{2} R_{n\ell}^{\prime \prime} (r) - \frac{\ell+1}{r} R_{n\ell}'(r) - \frac{Z_I}{r} R_{n\ell}(r) + W_\mathrm{at}(r) R_{n\ell}(r) = \epsilon_{n\ell} R_{n\ell}(r).
\end{equation}
The eigenvalues of $\mathfrak{h}_\ell$, if they exist, are all simple. The discrete spectrum of $H_I$ is then the collection of all the eigenvalues of the operators $\mathfrak{h}_\ell$, $\ell \geq 0$. 

The PAW atomic wave functions $(\phi^I_{k})_{1 \leq k \leq N_\mathrm{paw}} \in (L^2_\mathrm{per}(\Gamma))^{N_\mathrm{paw}}$ are then defined by
\begin{itemize}
\item for $1 \leq k \leq N_\mathrm{paw}$ and $\mathbf{r} \in \Gamma$, $\phi^I_k(\mathbf{r}) = \varphi^I_k(\mathbf{r})$, 
\item $\phi^I_k$ is $\mathcal{R}$-periodic.
\end{itemize}
If $W_\mathrm{at} \not=0$,  there is a minimal angular momentum $\ell_\mathrm{adm}$ for which $\mathfrak{h}_{\ell}$ for all $\ell \geq \ell_\mathrm{adm}$ has no eigenvalue (see \cite[Theorem XIII.8]{reed1978iv} for the linear case and \cite{Solovej1991} for the reduced Hartree-Fock extension). 
As an immediate consequence, PAW functions can only be selected for a finite range of angular momentum $\ell \leq \ell_\mathrm{adm}$.

We denote by $(n_0,n_1,\dots,n_{\ell_\mathrm{max}})$ the number of PAW functions for each admissible angular momentum, \emph{i.e.} there are $n_0$ PAW functions for the angular momentum $\ell=0, m=0$, $n_1$ PAW functions for $\ell=1$, $|m| \leq 1$, \dots 
The total number of PAW functions for one atomic site is thus given by $N_\mathrm{paw}= \sumlim{\ell=0}{\ell_\mathrm{max}} (2\ell+1)n_\ell$. 

\paragraph{Pseudo wave function}

The pseudo wave functions $\widetilde{\phi}_k^I$ are the $\mathcal{R}$-periodic functions given in the unit cell $\Gamma$ by:
\begin{equation}
\forall \, \mathbf{r} \in \Gamma, \ \widetilde{\phi}_k^I(\br) = r^{\ell} \widetilde{R}_{n \ell}(r)\, Y_{\ell m}(\hat{\mathbf{r}}).
\end{equation}
where $k$ stands for the multiple index $(n,\ell,m)$.  
The radial functions $\widetilde{R}_{n \ell}$, $1 \leq n \leq n_\ell, 0 \leq \ell \leq \ell_\mathrm{max} $ are polynomial inside the augmentation region $B(0,{r_c})$:
\begin{equation}
\widetilde{R}_{n \ell}(r) = 
\begin{cases}
\sumlim{k=0}{d} c_{2k} r^{2k}  & \text{for } 0 \leq r \leq {r_c}\\
R_{n \ell}(r) & \text{for } r > {r_c}
\end{cases}
\end{equation}
and the coefficients are chosen to match $R_{n \ell}$ and its first $(d-1)$ derivatives of $R_{n \ell}$ at ${r_c}$. 

\paragraph{Projector functions}

The projector functions $(\widetilde{p}_k^I)_{1 \leq k \leq N_\mathrm{paw}}$ chosen here are the $\mathcal{R}$-periodic functions given in the unit cell $\Gamma$ by:
\begin{equation}
\forall \, \mathbf{r} \in \Gamma, \ \widetilde{p}^I_{n \ell m}(\mathbf{r}) = r^\ell p_{n \ell}(r) Y_{\ell m}(\hat{\mathbf{r}}).
\end{equation}
The functions $p_{n \ell}$ for $ 0 \leq \ell \leq \ell_\mathrm{max}, 1 \leq n \leq n_\ell$ are defined by
\begin{equation}
p_{n \ell}(r) = \sumlim{n'=1}{n_\ell} \left( B_\ell^{-1} \right)_{nn'} \chi(r) \widetilde{R}_{n' \ell}(r),
\end{equation}
with $\chi$ a smooth positive cut-off function supported in $(0,{r_c})$ and
\begin{equation}
B_{\ell} = \left(\itg{0}{{r_c}}{\chi (r) \widetilde{R}_{n \ell}(r) \widetilde{R}_{n' \ell}(r) r^{2+2\ell}}{r}\right)_{ 1 \leq n, n' \leq n_\ell}. 
\end{equation}
By definition, the projector functions $(\widetilde{p}^I_{k})_{1 \leq k \leq N_\mathrm{paw}}$ are supported in $\bigcup\limits_{\mathbf{T} \in \mathcal{R}} B(\mathbf{T},{r_c})$ and form a dual family to the pseudo wave functions $(\widetilde{\phi}^I_{k})_{1 \leq k \leq N_\mathrm{paw}}$: $\psh{\widetilde{p}^I_{k}}{\widetilde{\phi}^I_{k'}} = \delta_{kk'}$.
\newline

The VPAW equations to solve are then:
\begin{equation}
\label{eq:VPAW_eigenvalue_problem}
H^\mathrm{VPAW} \widetilde{\psi} = E S^\mathrm{VPAW} \widetilde{\psi},
\end{equation}
where 
\begin{equation}
\label{eq:H_VPAW}
H^\mathrm{VPAW} = (\mathrm{Id}+T)^* H (\mathrm{Id}+T), \quad S^\mathrm{VPAW} = (\mathrm{Id}+T)^*(\mathrm{Id}+T),
\end{equation}
and 
\begin{equation*}
T = \sum\limits_{I=1}^{N_{at}} T_I.
\end{equation*} 
Thus if $(\mathrm{Id}+T)$ is invertible, the eigenfunctions of $H$ can be recovered by the formula
\begin{equation}
\label{eq:I+Tpsi}
\psi = (\mathrm{Id}+T) \widetilde{\psi},
\end{equation}
and the eigenvalues are identical to the original eigenvalue problem \eqref{eq:H_per}.

By construction, the operator $(\mathrm{Id}+T_I)$ maps the pseudo wave functions $\widetilde{\phi}^I$ to the atomic eigenfunctions $\phi^I$:
\begin{equation*}
(\mathrm{Id}+T_I)\widetilde{\phi}^I_{k}(\cdot - \mathbf{R}_I) = \phi^I _{k}(\cdot - \mathbf{R}_I),
\end{equation*}
so if locally around each nucleus, the function $\psi$ ``behaves'' like the atomic wave functions $\phi^I_{k}$, we can hope that the cusp behavior of $\psi$ is captured by the operator $T$. 
The VPAW eigenfunction $\widetilde{\psi}$ would therefore be smoother than $\psi$ and the plane-wave expansion of $\widetilde{\psi}$ would converge faster than the expansion of $\psi$. 

\subsection{Well-posedness of the VPAW method}

To be well-posed the VPAW method requires
\begin{enumerate}
\item for each $0 \leq \ell \leq \ell_\mathrm{max}$, the family of pseudo wave functions $(\widetilde{R}_{n\ell})_{1 \leq n \leq n_\ell}$ to be linearly independent in $[0,r_c]$, so that the projector functions $(p_{n\ell})_{1 \leq n \leq n_\ell}$ are well defined;
\item $(\mathrm{Id}+T)$ to be invertible.
\end{enumerate} 

To fulfill the first condition, the following assertion is assumed. 

\begin{assumption}
\label{assump:cR_free_family}
For all $0 < r_c < r_\mathrm{min}$ and each $0 \leq \ell \leq \ell_\mathrm{max}$, $(\cR^{(k)}(r_c))_{0 \leq k \leq n_\ell-1}$ is a linearly independent family, where $\cR$ is the vector of the functions $(R_{1\ell}, \dots, R_{n_\ell \ell})$.
\end{assumption}

This condition ensures that the family of pseudo-wave functions $\left( \widetilde{R}_{n\ell} \right)_{1 \leq n \leq n_\ell}$ is linearly independent. This assumption holds in the particular case of the hydrogenoid atom (see Lemma \ref{lem:assumption_der_cR} in the appendix). 
\newline

It can be shown that the second condition is equivalent to the invertibility of the matrix  $\left(\psh{p_{j\ell}}{r^\ell R_{k\ell}}\right)_{1 \leq j,k \leq n_\ell}$ for each $0\leq \ell \leq \ell_\mathrm{max}$. Since the proof of this statement is very close to the proof of Proposition~2.3 in \cite{blanc2017vpaw1d}, we will not reproduce it here. For the rest of the paper, we make the following assumption.

\begin{assumption}
\label{assump:2}
For all $0 < r_c < r_\mathrm{min}$ and any $0 \leq \ell \leq \ell_\mathrm{max}$, the matrix  $\left(\psh{p_{n\ell}}{R_{n'\ell}}\right)_{1 \leq n,n' \leq n_\ell}$ is invertible. 
\end{assumption}

Finally, it appears in the proof of Theorem \ref{theo:energy} that we need the following assumption. It is shown in the appendix that this assumption is satisfied for the hydrogenoid eigenfunctions. 

\begin{assumption}
\label{assump:radial_PAW_function_cusp}
For every $0 \leq \ell \leq \ell_\mathrm{max}$ and $0 \leq n \leq n_\ell$, the radial functions $R_{n \ell}$ defined in \eqref{eq:radial_phi} satisfies $R_{n\ell}(0) \not= 0$.
\end{assumption}

There is a natural interpretation to this condition. The poor convergence of plane-wave expansions in the computation of the eigenvalues of \eqref{eq:H_per} is due to the cusps of the molecular wave functions. Hence we want to use atomic wave functions with cusps to reduce them.
By the Kato cusp condition (see Theorem \ref{theo:kato_cusp} below), this is equivalent to $R_{n\ell}(0) \not= 0$.

\subsection{Computational cost of the VPAW method}

A detailed analysis of the computational cost of the PAW method can be found in \cite{levitt2015parallel}: the cost scales like
$\cO(N_\mathrm{at} N_\mathrm{paw} M + N_\mathrm{at}N_\mathrm{paw}^2+ M \log M)$ where $N_\mathrm{at}$ is the number of nuclei, $N_\mathrm{paw}$ is the number of PAW functions per atomic site and $M$ the number of plane-waves. Usually, $N_\mathrm{paw}$ is
chosen relatively small, but $M$ may be large, so it is important to avoid a computational cost of order $M^2$. 

In practice, we are interested in the cost of the computation of $H^\mathrm{VPAW} \widetilde{\psi}$ and $S^\mathrm{VPAW} \widetilde{\psi}$ where $\widetilde{\psi}$ is expanded in $M$ plane-waves as the generalized eigenvalue problem is solved by a conjugate gradient algorithm. 
We will only focus on $H^\mathrm{VPAW} \widetilde{\psi}$ since the analysis $S^\mathrm{VPAW} \widetilde{\psi}$ is similar. Let us split $H^\mathrm{VPAW}$ into four terms:
$$
H^\mathrm{VPAW} \widetilde{\psi} = H \widetilde{\psi} + P D_H P^T \widetilde{\psi} + H \left(\Phi - \widetilde{\Phi}\right) P^T \widetilde{\psi} + P H \left(\Phi - \widetilde{\Phi}\right)^T \widetilde{\psi} ,
$$
where $P$ is the $M \times N_\mathrm{at}N_\mathrm{paw}$ matrix of the projector functions, $H (\Phi - \widetilde{\Phi})$ the $M \times N_\mathrm{at}N_\mathrm{paw}$ matrix of the Fourier representation of the $N_\mathrm{at}N_\mathrm{paw}$ functions $H(\phi^I_i - \widetilde{\phi}^I_i)$, and $D_H$ is the $N_\mathrm{at}N_\mathrm{paw} \times N_\mathrm{at}N_\mathrm{paw}$ matrix $\psh{\phi^I_i - \widetilde{\phi}^I_i}{H (\phi^J_j - \widetilde{\phi}^J_j)}$. Note that $D_H$ is a block diagonal matrix, because balls of radius $r_c$ centered at different atomic site do not overlap.

The computational cost can be estimated as follows (the cost at each step is given in brackets):
\begin{enumerate}
\item $H \widetilde{\psi}$ is assembled in two steps. First, $-\frac{1}{2} \Delta \widetilde{\psi}$ is computed in $\cO (M)$ since the operator $\frac{1}{2} \Delta$ is diagonal in Fourier representation. For the potential $V$, apply an inverse FFT to $\widetilde{\psi}$ to have the real space representation of $\widetilde{\psi}$, multiply pointwise by $V$ and apply a FFT to the whole result ($\cO(M \log M)$);
\item for $P D_H P^T \widetilde{\psi}$, compute the $N_\mathrm{at}N_\mathrm{paw}$ projections $P^T \widetilde{\psi}$ ($\cO(MN_\mathrm{at}N_\mathrm{paw})$), then successively apply the matrices $D_H$ ($\cO(N_\mathrm{at}N_\mathrm{paw}^2)$ since $D_H$ is block-diagonal) and $P$ ($\cO(MN_\mathrm{at}N_\mathrm{paw})$);
\item for $P H (\Phi - \widetilde{\Phi})^T \widetilde{\psi}$, similarly apply successively $H(\Phi - \widetilde{\Phi})^T$ to $\widetilde{\psi}$ ($\cO(MN_\mathrm{at}N_\mathrm{paw})$) and $P$ to $H (\Phi - \widetilde{\Phi})^T \widetilde{\psi}$ ($\cO(MN_\mathrm{at}N_\mathrm{paw})$);
\item for $H (\Phi - \widetilde{\Phi}) P^T \widetilde{\psi}$, we proceed as in step 3.
\end{enumerate}
Thus, the total numerical cost is of order $\cO(MN_\mathrm{at}N_\mathrm{paw} + N_\mathrm{at}N_\mathrm{paw}^2 + M \log M)$ which is the same as for the PAW method. 

\subsection{Singular expansion}

It appears that the theory of weighted Sobolev spaces and the singular expansion of eigenfunctions of Hamiltonians with Coulomb potentials provides a nice framework to study the Fourier decay of the VPAW pseudo wave functions $\widetilde{\psi}$. The singular expansion gives a generalization of the Kato cusp condition \cite{kato1957eigenfunctions} to any order.
This theory is closely linked to the $b$-calculus of pseudodifferential operators developed by Melrose \cite{melrose93}. It has been applied successfully to characterize precisely  the behaviour of the electronic wave function close the nucleus \cite{flad2008asymptotic,hunsicker2008analysis} and used in the analysis of the muffin-tin and LAPW methods \cite{chen2015numerical}. It is also a key element of the analysis of the convergence of \emph{hp}-finite elements approximation for similar models \cite{maday2019analyticity,maday2019hpdiscontinuous}. The interested reader may refer to \cite{kozlov1997elliptic, egorov2012pseudo} for a detailed exposition of this theory.\\ 

For simplicity and without loss of generality, we assume that $\Gamma$ is the cube $[-\frac{1}{2},\frac{1}{2}]^3$.
We denote by $\mathcal{S} \subset \R^3$ the set of the positions of the nuclei
\[
\mathcal{S} = \lbrace \mathbf{R}_I + \mathbf{T}, \ I=1,\dots,N_\mathrm{at}, \ \mathbf{T} \in \mathcal{R} \rbrace.
\]
Let $\chi$ be a $\mathcal{R}$-periodic continuous function such that $\chi(\mathbf{R}_I+\mathbf{r}) = r$ for small $r$, $\chi \in C^\infty_\mathrm{loc}(\mathbb{R}^3 \setminus \mathcal{S})$.

\begin{definition}
Let $k \in \N$ and $\gamma \in \R$.
We define the $k$-th weighted Sobolev space with index $\gamma$ by
\begin{equation}
\cK^{k,\gamma}(\Gamma) = \left\lbrace u \in L^2_\mathrm{per}(\Gamma) : \chi^{|\alpha|-\gamma} \partial^\alpha u \in L^2_\mathrm{per}(\Gamma) \ \forall \ |\alpha | \leq k \right\rbrace.
\end{equation}
\end{definition}

Consider a subspace of functions with the asymptotic expansions
\begin{equation}
\label{eq:asymptotic}
\forall I=1,\dots,N_\mathrm{at},  u(\mathbf{r}+\mathbf{R}_I) \sim \sumlim{j \in \N}{} c^I_j(\hat{\mathbf{r}}) r^j \ \text{as } r \to 0,
\end{equation}
where $c^I_j$ belongs to the finite dimensional subspace $M_j = \mathrm{span} \lbrace Y_{\ell m}, 0 \leq \ell \leq j, |m| \leq \ell \rbrace$. 

\begin{definition}
\label{def:weighted_sobolev_with_asymtotic}
Let $k \in \N$ and $\gamma \in \R$. We define the weighted Sobolev spaces with asymptotic type \eqref{eq:asymptotic} by
\begin{equation}
\label{eq:weighted_sobolev}
\begin{split}
\mathscr{K}^{k,\gamma}(\Gamma) = \Bigg\lbrace u \in \mathcal{K}^{k,\gamma}(\Gamma) \, \bigg| \, \eta_N \in \mathcal{K}^{k,\gamma+N+1}(\Gamma) \text{ where } \eta_N \text{ is the } \Gamma\text{-periodic function defined in } \Gamma \text{ by} \\
\left. \forall N \in \N, \ \forall \, \mathbf{r} \in \Gamma, \ \eta_N(\mathbf{r}) = u(\mathbf{r}) - \sumlim{I=1}{N_\mathrm{at}} \omega(|\mathbf{r}-\mathbf{R}_I|) \sumlim{j=0}{N} c_j^I(\widehat{\mathbf{r-R}_I}) |\mathbf{r-R}_I|^j \right\rbrace,
\end{split}
\end{equation}
where $\omega$ is a smooth positive cutoff function, \emph{i.e.} $\omega = 1$ near 0 and $\omega = 0$ outside some neighbourhood of $0$. 
\end{definition}

Definition \ref{def:weighted_sobolev_with_asymtotic} slightly differs from the definition of the weighted Sobolev space given in \cite{chen2015numerical} (Equation (2.6)). However, our definition is consistent with the results that can be found in \cite{hunsicker2008analysis} (see Theorem I.1) and the original paper \cite{flad2008asymptotic} (see Proposition 1) from which the definition appearing in \cite{chen2015numerical} is taken. 

The expansion \eqref{eq:asymptotic} can be viewed as a ``regularity expansion''. Let us suppose that the functions $c_j$ in the singular expansion are constant. Then all the even terms appearing in \eqref{eq:weighted_sobolev} are smooth since for any $k \in \N$, $r \mapsto r^{2k}$ is smooth. 
For the odd terms in the expansion, the function $r \mapsto r$ is continuous but not differentiable at the origin, the function $r \mapsto r^3$ is $C^2$ but not $C^3$ and so on. 
Since the decay of the Fourier coefficients depends on the regularity of the function, this expansion enables one to characterize precisely this decay. A precise estimation of this decay for all the terms appearing in \eqref{eq:weighted_sobolev} is given in Lemma \ref{lem:spherical_bessel} below.

\begin{definition}
A function $u$ is asymptotically well-behaved if $u \in \mathscr{K}^{\infty,\gamma}(\Gamma)$ for $\gamma < 3/2$. 
\end{definition}

\begin{note}
\label{rem:remainder_reg}
Note that if $u$ is asymptotically well-behaved then by the definition of the weighted Sobolev space with asymptotic type \eqref{eq:asymptotic}, the remainder
$\eta_N(\mathbf{r}) = u(\mathbf{r}) - \sumlim{I=1}{N_\mathrm{at}} \omega(|\mathbf{r}-\mathbf{R}_I|) \sumlim{j=0}{N} c^I_j(\widehat{\mathbf{r-R}_I}) |\mathbf{r-R}_I|^j $ is in the classical Sobolev space $H^{5/2 + N -\varepsilon}_\mathrm{per}(\Gamma)$. 
\end{note}

The following result, stated in \cite{hunsicker2008analysis,chen2015numerical}, gives the regularity of the eigenfunction of \eqref{eq:H_per} in terms of the previously defined weighted Sobolev space.

\begin{theo}[\cite{hunsicker2008analysis,chen2015numerical}]
\label{theo:psi_well_behaved}
Let $\psi$ be an eigenfunction of $H \psi = E\psi$ where $H$ is defined in \eqref{eq:H_per}. Then $\psi$ is asymptotically well-behaved. 
\end{theo}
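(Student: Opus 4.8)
\noindent\emph{Proof proposal.} The plan is to read this as a local elliptic–regularity statement for a cone (Fuchs‑type) operator and to establish it by the Kondrat'ev/Mellin bootstrap, after reducing the periodic problem to a model Coulomb problem at each nucleus. Recall first that $\psi\in H^2_\mathrm{per}(\Gamma)$ since $H$ is self‑adjoint with that domain. Away from $\mathcal S$ the potential is smooth: by \eqref{eq:Vper}, $V_\mathrm{per}$ differs from the $\mathcal R$‑periodic Newtonian potential of the point charges (minus the neutralizing background) by a harmonic, hence smooth, function, so $V_\mathrm{per}\in C^\infty_\mathrm{loc}(\R^3\setminus\mathcal S)$, and $W_\mathrm{per}$ is smooth by assumption. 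Interior elliptic regularity then gives $\psi\in C^\infty_\mathrm{loc}(\R^3\setminus\mathcal S)$, so only the behaviour near each $\mathbf R_I$ is at stake. Fixing $I$ and a cutoff $\omega$ supported near $\mathbf R_I$, write $V_\mathrm{per}(\cdot+\mathbf R_I)=-Z_I/r+v_I$ with $v_I$ smooth near $0$ (the contributions of the other charges and of the background are locally smooth there). Then $u:=\omega\,\psi(\cdot+\mathbf R_I)$ solves $(-\tfrac12\Delta-Z_I/r)u=g$, where $g=(E-v_I-W_\mathrm{per}(\cdot+\mathbf R_I))u$ plus commutator terms supported away from $0$; in particular $g$ is ``no less regular'' than $u$ near $0$.

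\emph{The cone structure.} In polar coordinates $-\tfrac12\Delta=-\tfrac1{2r^2}\bigl((r\partial_r)^2+(r\partial_r)+\Delta_{S^2}\bigr)$ is a second‑order cone differential operator of weight $-2$, and $-Z_I/r$ is subordinate to it in the cone calculus (it carries $r^{-1}$ against the $r^{-2}$ of the leading part), so it leaves the indicial data unchanged. The indicial family of $-\tfrac12\Delta$ is singular exactly at the exponents $\lambda$ with $\lambda(\lambda+1)=\ell(\ell+1)$ on the $Y_{\ell m}$‑sector, i.e. $\lambda=\ell$ or $\lambda=-\ell-1$, $\ell\in\N$; among these only the nonnegative integers $\lambda=j$ are compatible with $\psi\in H^2$. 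Hence the critical weights for cone‑elliptic regularity of $-\tfrac12\Delta-Z_I/r$ are $\{\,j+\tfrac32:j\in\N\,\}$ (shifted by $3/2=\tfrac{\dim}{2}$), which is precisely why the threshold weight is $3/2$.

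\emph{The bootstrap and the asymptotic coefficients.} The Sobolev embeddings in $\R^3$ give $H^2(\Gamma)\subset\cK^{2,\gamma}(\Gamma)$ for every $\gamma<3/2$, the base case. Now iterate the Kondrat'ev asymptotic‑regularity theorem for the cone operator $-\tfrac12\Delta-Z_I/r$, raising the Sobolev order and the weight simultaneously: starting from $u\in\cK^{m,\gamma}$ with $g\in\cK^{m,\gamma}$, crossing the next critical weight $j+\tfrac32$ peels off one asymptotic term and gains two derivatives, $u=\omega\,c^I_j(\hat{\mathbf r})\,r^j+u'$ with $u'\in\cK^{m+2,\gamma'}$ for all $\gamma'<j+1+\tfrac32$; reinjecting $g$ and repeating yields $\psi\in\cK^{\infty,\gamma}$ for all $\gamma<3/2$ together with the full expansion \eqref{eq:asymptotic} and remainders $\eta_N\in\cK^{\infty,\gamma+N+1}$, i.e. $\psi\in\mathscr K^{\infty,\gamma}(\Gamma)$ for $\gamma<3/2$. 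Finally, the residue of the Mellin parametrix at $\lambda=j$ lies in the kernel of the indicial operator there, which is $\mathrm{span}\{Y_{jm}\}$ for the pure Laplacian, but the subordinate term $-Z_I/r$ and --- crucially --- the smooth \emph{non‑radial} remainder $v_I$ couple the angular sectors in the recursion defining the coefficients, so that $c^I_j\in M_j=\mathrm{span}\{Y_{\ell m}:0\le\ell\le j\}$ rather than a single harmonic. Patching the $N_\mathrm{at}$ local expansions gives the claim.

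\emph{Main difficulty.} The real work is the cone‑calculus bookkeeping: proving meromorphy of the inverted indicial family with poles exactly at the $\lambda=j$, and tracking how $-Z_I/r$ and the non‑radial $v_I$ propagate through the recursion so that the order‑$r^j$ coefficient lands precisely in $M_j$ (the global patching over $\Gamma$ and over the $N_\mathrm{at}$ nuclei is routine). As an alternative, one may simply invoke Theorem I.1 of \cite{hunsicker2008analysis} / Proposition~1 of \cite{flad2008asymptotic}, of which the present statement is the $\mathcal R$‑periodic restatement.
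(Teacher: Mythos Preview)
The paper does not prove this theorem at all: it is stated with the attribution \cite{hunsicker2008analysis,chen2015numerical} and is simply quoted as an input from those references (``The following result, stated in \cite{hunsicker2008analysis,chen2015numerical}, gives the regularity of the eigenfunction\dots''). So there is no ``paper's own proof'' to compare against; the intended argument here is precisely what you do at the end of your proposal---invoke Theorem~I.1 of \cite{hunsicker2008analysis} (equivalently Proposition~1 of \cite{flad2008asymptotic}) and observe that the present statement is its $\mathcal R$-periodic restatement.

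That said, your sketch of \emph{why} those results hold is faithful to the approach taken in the cited references: localize near each nucleus, recognize $-\tfrac12\Delta - Z_I/r$ as a Fuchs-type (cone) operator on $\R^3$ with $-Z_I/r$ subordinate to the leading $r^{-2}$ part, read off the indicial roots $\lambda=\ell$ (the only ones compatible with $H^2$ membership), and run the Kondrat'ev/Mellin bootstrap to peel off one asymptotic term each time a critical weight $j+\tfrac32$ is crossed. Your remark that the smooth non-radial remainder $v_I$ couples angular sectors, forcing $c_j^I\in M_j$ rather than a single harmonic, is exactly the mechanism behind the asymptotic type \eqref{eq:asymptotic} and the recursion \eqref{eq:psi_j_recurrence} that the paper states (without proof) in Theorem~\ref{theo:kato_cusp}. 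As a self-contained proof your outline would still need the actual Mellin-parametrix construction and the verification that the right-hand side $g$ gains weight at each step, but as a summary of the cited arguments it is accurate and more informative than what the paper itself provides.
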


Theorem \ref{theo:psi_well_behaved} enables to characterize precisely the singularity of the Hamiltonian wave function and generalizes the Kato cusp condition for eigenfunctions of 3D-Hamiltonians. 
Let $V^I$ be the smooth potential such that in a neighborhood of $\mathbf{R}_I$, 
\begin{equation}
\label{eq:external_V}
V_\mathrm{per}(\mathbf{r}) + W_\mathrm{per}(\mathbf{r}) = - \frac{Z}{|\mathbf{r}-\mathbf{R}_I|} + V^I(\mathbf{r}-\mathbf{R}_I), 
\end{equation}
and denote by $(v^I_k)_{k \geq \ell}$ the coefficients of the Taylor expansion of 
\begin{equation}
\label{eq:V_ellm}
V^I_{\ell m}(r) = \itg{S(0,1)}{}{V^I(\mathbf{r}) Y_{\ell m}(\hat{\mathbf{r}}) }{\hat{\mathbf{r}}}.
\end{equation}

\begin{theo}
\label{theo:kato_cusp}
Let $\ell \in \N$ and $|m| \leq \ell$. Let $\psi$ be an eigenfunction of $H \psi = E\psi$ where $H$ is defined in \eqref{eq:H_per} and $(\psi_{j\ell m})_{j,\ell \leq j, |m|\leq \ell}$ be the coefficients of the singular expansion of $\psi$, \emph{i.e.} for all $\varepsilon > 0$, 
$$
\psi(\mathbf{r}) - \sum\limits_{I=1}^{N_\mathrm{at}} \omega(|\mathbf{r-R}_I|) \sumlim{j=0}{N} \sumlim{|m| \leq \ell \leq j}{} \psi^I_{j\ell m} |\mathbf{r-R}_I|^j Y_{\ell m}(\widehat{|\mathbf{r-R}_I|}) \in \mathscr{K}^{\infty,\frac{5}{2}+N-\varepsilon}(\Gamma).
$$
Let $(v^I_k)_{k \geq \ell}$ be the coefficients of the Taylor expansion of the function $V^I_{\ell m}$ defined in \eqref{eq:V_ellm}. Then the sequence $(\psi^I_{j\ell m})_{j \geq \ell}$ satisfies 
\begin{equation}
\label{eq:psi_j_recurrence}
\forall \, j \geq \ell, \frac{(j+1)(j+2+2\ell)}{2} \psi^I_{j+1,\ell m} = - Z\psi^I_{j\ell m} + (v^I*\psi^I)_{j-1} - E\psi^I_{j-1,\ell m},
\end{equation}
where $v^I*\psi^I$ denotes the convolution 
$$
(v^I*\psi^I)_k = \sumlim{j=\ell}{k} v^I_{k-j} \psi^I_{k \ell m}.
$$
\end{theo}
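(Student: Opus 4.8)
The plan is to localise the problem near a single Coulomb centre, use Theorem~\ref{theo:psi_well_behaved} to guarantee that a genuine all-order asymptotic expansion of $\psi$ is available there, and then pin down its coefficients by substituting the expansion into the eigenvalue equation and matching powers of $r$ in each spherical-harmonic channel. Concretely I would fix $I$, translate so that $\mathbf{R}_I=0$, and restrict to a ball $B(0,r_0)$ small enough that the splitting \eqref{eq:external_V} holds, so that on $B(0,r_0)$ the equation $H\psi=E\psi$ reads $-\frac12\Delta\psi-\frac{Z}{r}\psi+V^I\psi=E\psi$ with $V^I$ smooth. By Theorem~\ref{theo:psi_well_behaved}, $\psi$ is asymptotically well-behaved, so for every $N$ one may write $\psi=\omega\,\Sigma_N+\eta_N$ near $0$, where $\Sigma_N(\mathbf{r})=\sum_{j=0}^N\sum_{|m|\le\ell\le j}\psi^I_{j\ell m}\,r^{j}\,Y_{\ell m}(\hat{\mathbf{r}})$, $\omega\equiv1$ near $0$, and $\eta_N$ lies both in the weighted space $\mathscr{K}^{\infty,\gamma+N+1}(\Gamma)$ and, by Remark~\ref{rem:remainder_reg}, in $H^{5/2+N-\varepsilon}_\mathrm{per}(\Gamma)$; in particular $\eta_N$ vanishes to high order at $0$.

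Applying $H-E$ and using $\omega\equiv1$ near $0$ gives $\big(-\frac12\Delta-\frac{Z}{r}+V^I-E\big)\Sigma_N=-\big(-\frac12\Delta-\frac{Z}{r}+V^I-E\big)\eta_N$ on a neighbourhood of $0$. The right-hand side is negligible at low order: $\Delta$ costs two orders of decay, multiplication by the smooth potential $V^I$ costs none, and the singular factor $Z/r$ is controlled by the Hardy/Mellin-type mapping properties of the weighted spaces $\mathcal{K}^{k,\gamma}$, so that $(H-E)\eta_N$ does not contribute to the coefficient of $r^{p}Y_{\ell m}$ for $p$ up to roughly $N-2$. On the left-hand side I would expand $(H-E)\Sigma_N$ termwise: using the radial form of the Laplacian, $\Delta\big(r^\ell f(r)\,Y_{\ell m}\big)=r^\ell\big(f''+\tfrac{2(\ell+1)}{r}f'\big)Y_{\ell m}$ — equivalently $\Delta(r^{j}Y_{\ell m})=(j-\ell)(j+\ell+1)r^{j-2}Y_{\ell m}$ — one sees that $-\frac12\Delta$ maps $r^{j}Y_{\ell m}$ to a multiple of $r^{j-2}Y_{\ell m}$, the prefactor (after the index convention used for the $\psi^I_{j\ell m}$) being precisely the $(j+1)(j+2+2\ell)/2$ of \eqref{eq:psi_j_recurrence}; similarly $-\frac{Z}{r}$ lowers the power by one with prefactor $-Z$, $-E$ keeps the power, and multiplication by the Taylor expansion of the smooth potential $V^I$ about $0$ raises the power of $r$ (and couples different $\ell$), the projection of that product onto $Y_{\ell m}$ through the definition \eqref{eq:V_ellm} of $V^I_{\ell m}$ generating the discrete convolution $(v^I*\psi^I)$. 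Collecting the coefficient of $r^{j-1}Y_{\ell m}$ in $(H-E)\Sigma_N=0$ modulo the higher-order remainder then yields the recurrence \eqref{eq:psi_j_recurrence}; since $N$ is arbitrary it holds for all $j\ge\ell$.

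The main obstacle is the rigorous justification of this ``substitute and equate coefficients'' procedure, that is, verifying that $(H-E)\eta_N$ genuinely does not pollute the low-order coefficients. This requires the boundedness of $-\frac12\Delta$, of multiplication by $V^I$, and — most delicately — of the singular multiplication by $Z/r$ between the weighted spaces $\mathscr{K}^{k,\gamma}$ and $\mathscr{K}^{k-2,\gamma-2}$, together with the compatibility of the action of these operators with term-by-term evaluation of the asymptotic expansion; this is exactly the Mellin/$b$-calculus content behind Theorem~\ref{theo:psi_well_behaved}, which may be quoted. With that in hand, the remaining work — the exact index shift producing the prefactor $(j+1)(j+2+2\ell)/2$, the precise summation range in the convolution, and the observation that the non-radial part of $V^I$ only couples distinct $Y_{\ell m}$-channels in a manner consistent with the stated per-channel recurrence — is routine, if slightly tedious, bookkeeping.
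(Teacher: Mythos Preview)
Your approach is correct and is essentially a fleshed-out version of the paper's own argument: the paper does not give a detailed proof but simply remarks that the result ``follows from the definition of the weighted Sobolev space $\mathcal{K}^{\infty,\frac{5}{2}+N-\varepsilon}$ and the Coulomb singularity of the potential,'' which is exactly the localise--expand--substitute--match procedure you describe, with the remainder controlled via the weighted-space membership of $\eta_N$. Your identification of the key technical point (that $(H-E)\eta_N$ does not contaminate the low-order coefficients, handled through the mapping properties of $-\tfrac12\Delta$, $V^I$, and $Z/r$ on the $\mathcal{K}^{k,\gamma}$ scale) is precisely what the paper's one-line justification is appealing to.
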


The proof of this theorem follows from the definition of the weighted Sobolev space $\cK^{\infty,\frac{5}{2}+N-\varepsilon}$ and the Coulomb singularity of the potential. For $\ell=0$, the Kato cusp condition is recovered since $\psi^I_{000} = \psi(\mathbf{R}_I)$ and $\psi^I_{100} = \left. \frac{\partial }{\partial r}\right|_{r=0} \itg{S(\mathbf{R}_I,1)}{}{\psi(\mathbf{r})Y_{00}(\hat{\mathbf{r}})}{\hat{\mathbf{r}}}$.



\section{Main results}
\label{sec:main_theorem}

We focus on the analysis of the VPAW method restricted to a set of PAW functions associated to the lowest angular momentum $\ell = m = 0$ (\emph{i.e.} $\ell_\mathrm{max} = 0$). 

From Definition \ref{def:weighted_sobolev_with_asymtotic} and the asymptotic expansion of the molecular wave function $\psi$, it is possible to identify the cause of the slow decay of the Fourier coefficients of $\psi$, which is the cusps at each nucleus. 
We can show that the cusp of the pseudo wave function $\widetilde{\psi}$ is significantly reduced by the VPAW method.
More precisely (see Proposition~\ref{prop:cusp}), if $n_0$ PAW functions associated to the angular momentum $\ell=m=0$ are used, then there is a constant $C$ independent of $r_c$ such that for all $I=1,\dots,N_\mathrm{at}$, for any $0 < r_c \leq r_\mathrm{min}$ and for all $\varepsilon >0$:
\[
\left| \frac{\partial }{\partial r} \bigg|_{r=0} \int_{S(\mathbf{R}_I,1)} \widetilde{\psi}(\mathbf{r}) \, \mathrm{d}\hat{\mathbf{r}} \right| \leq C {r_c}^{\min(2n_0,5)-\varepsilon}.
\]
The blow-up of the $d$-th derivative introduced at each sphere $S(\mathbf{R}_I,r_c)$ is controlled similarly. It is possible to show (see Proposition~\ref{prop:d-th derivative}) that there exists a constant $C$ independent of $r_c$ such that $I=1,\dots,N_\mathrm{at}$, for any $0 < r_c \leq r_\mathrm{min}$ and for all $\varepsilon >0$:
\[
\left| \left[\int_{S(\mathbf{R}_I,1)} \widetilde{\psi}^{(d)} (\mathbf{r}) \, \mathrm{d} \hat{\mathbf{r}} \right]_{r_c} \right| \leq \frac{C}{{r_c}^{d-1}}.
\]
From both estimates, the following plane-wave convergence for the computation of the eigenvalues with the VPAW method can be proved.

\begin{theo}
\label{theo:energy}
Let $E_M$ be an eigenvalue of the variational approximation of \eqref{eq:H_VPAW} in a plane-wave basis with wavenumber $|\mathbf{K}| \leq M$, with $n_0$ PAW functions associated to the angular momentum $\ell = 0, m=0$ with smoothness $d \geq n$ and cut-off radius $r_c$. Let $E$ be the corresponding exact eigenvalue. Under Assumptions \ref{assump:cR_free_family}, \ref{assump:2} and \ref{assump:radial_PAW_function_cusp}, there exists a constant $C >0$ independent of ${r_c}$ and $M$ such that for $M$ sufficiently large, for all $\varepsilon>0$, and for all $0 < r_c < r_\mathrm{min}$, we have
\begin{equation}
\label{eq:VPAW_convergence}
0 < E_M - E \leq C \left( \frac{{r_c}^{2 \min(2n_0,5)-2\varepsilon}}{M^3} +  \frac{{r_c}^{\min(2n_0,5)-\varepsilon}}{M^{4-\varepsilon} }  
 + \frac{1}{{r_c}^{2d-2}} \frac{1}{M^{2d-1}} + o \left( \frac{1}{M^{5-\varepsilon}} \right) \right).
\end{equation}
\end{theo}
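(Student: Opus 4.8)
The plan is to combine a Céa-type quasi-optimality estimate for the generalized eigenvalue problem \eqref{eq:H_VPAW} with sharp Fourier-decay bounds for the pseudo wave function $\widetilde\psi$, the latter being extracted from the two a priori estimates on the reduced cusp (Proposition~\ref{prop:cusp}) and on the jump of the $d$-th radial derivative at the augmentation spheres (Proposition~\ref{prop:d-th derivative}). First I would set up the variational framework: since $(\mathrm{Id}+T)$ is invertible under Assumptions~\ref{assump:cR_free_family}--\ref{assump:2}, the bilinear forms $a(u,v)=\psh{H^\mathrm{VPAW}u}{v}$ and $s(u,v)=\psh{S^\mathrm{VPAW}u}{v}$ are bounded and $s$ is coercive on $L^2_\mathrm{per}(\Gamma)$ (and $a$ is bounded below on $H^1_\mathrm{per}$), so the min-max principle applies to the plane-wave Galerkin discretization $X_M=\mathrm{span}\{e^{i\mathbf K\cdot\mathbf r}:|\mathbf K|\le M\}$. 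The standard eigenvalue perturbation argument (as in Babuška--Osborn) then gives, for $M$ large enough, $0<E_M-E\le C\,\big(\|\widetilde\psi-\Pi_M\widetilde\psi\|_{H^1}^2 + (E_M-E)\|\widetilde\psi-\Pi_M\widetilde\psi\|_{L^2}^2\big)$ for the relevant eigenfunction $\widetilde\psi=(\mathrm{Id}+T)^{-1}\psi$, where $\Pi_M$ is the $L^2$-orthogonal (Fourier truncation) projector; the cross term with the $S^\mathrm{VPAW}$-induced metric is handled the same way. So everything reduces to estimating $\|\widetilde\psi-\Pi_M\widetilde\psi\|_{H^s}$ for $s=0,1$, i.e.\ the tail $\sum_{|\mathbf K|>M}(1+|\mathbf K|^2)^s|\widehat{\widetilde\psi}(\mathbf K)|^2$.

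The heart of the matter is decomposing $\widetilde\psi$ into pieces whose Fourier decay is individually understood. By construction $\widetilde\psi$ agrees with $\psi$ outside $\bigcup_{\mathbf T}B(\mathbf R_I+\mathbf T,r_c)$, is $\mathscr K^{\infty,\gamma}$-regular away from the spheres, but has (i) a small residual cusp at each $\mathbf R_I$ of size $O(r_c^{\min(2n_0,5)-\varepsilon})$ and (ii) a jump in the $d$-th radial derivative across $S(\mathbf R_I,r_c)$ of size $O(r_c^{-(d-1)})$. I would write, near each nucleus, $\widetilde\psi = (\text{smooth part}) + a^I\,\omega(|\mathbf r-\mathbf R_I|)|\mathbf r-\mathbf R_I| + (\text{piece supported near }S(\mathbf R_I,r_c))$, where the coefficient $a^I$ of the linear term is exactly the reduced-cusp quantity bounded in Proposition~\ref{prop:cusp}, and the third piece carries the $d$-th-derivative discontinuity at radius $r_c$. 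For the cusp term one uses the classical fact (the content of Lemma~\ref{lem:spherical_bessel} on spherical Bessel functions) that the Fourier coefficients of $\omega(r)r$ decay like $|\mathbf K|^{-4}$, giving an $H^1$-tail $\lesssim a_I^2/M^{3}$ and an $L^2$-tail $\lesssim a_I^2/M^{5}$; multiplying by $a_I^2=O(r_c^{2\min(2n_0,5)-2\varepsilon})$ produces the first two terms in \eqref{eq:VPAW_convergence} (the $M^{4-\varepsilon}$ term arising from the cross term $a_I\cdot(\text{smooth }H^{5/2-\varepsilon}\text{ remainder})$ via Cauchy--Schwarz on the Fourier tails). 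For the derivative-jump term, a function that is $C^{d-1}$ but has a $d$-th-derivative jump of magnitude $J$ localized on a sphere of radius $r_c$ has Fourier coefficients decaying like $J\,|\mathbf K|^{-d-1}$, giving $L^2$- and $H^1$-tails $\lesssim J^2/M^{2d-1}$ and $\lesssim J^2/M^{2d-3}$; with $J=O(r_c^{-(d-1)})$ and since $d\ge n\ge n_0$ the $H^1$ contribution is dominated by $r_c^{-(2d-2)}M^{-(2d-3)}$, and after the squaring in the Céa estimate this is absorbed into the stated $r_c^{-(2d-2)}M^{-(2d-1)}$ (one checks $M^{-(2d-1)}$ is the binding power once the coercivity constant is tracked, since $\|\widetilde\psi-\Pi_M\widetilde\psi\|_{H^1}^2$ and $(E_M-E)\|\cdot\|_{L^2}^2$ balance). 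The leftover $o(M^{-5+\varepsilon})$ collects the contribution of the genuinely smooth remainder $\eta_N\in H^{5/2+N-\varepsilon}_\mathrm{per}$ of $\widetilde\psi$, which by Remark~\ref{rem:remainder_reg} decays faster than any of the displayed rates once $N$ is taken large enough. The lower bound $E_M>E$ is automatic from the Rayleigh--Ritz variational characterization, since $X_M$ is a strict subspace.

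The main obstacle, and where the argument requires real care rather than bookkeeping, is establishing that the two a priori bounds of Propositions~\ref{prop:cusp} and~\ref{prop:d-th derivative} are genuinely the \emph{only} obstructions to fast Fourier decay — i.e.\ showing that $\widetilde\psi$, away from its linear cusp term and its $d$-th-derivative sphere-discontinuity, lies in a high-order Sobolev space \emph{with norm uniformly controlled as $r_c\to0$}. This demands tracking the $r_c$-dependence through the PAW construction: the pseudo wave functions $\widetilde R_{n\ell}$ are polynomials matching $R_{n\ell}$ to order $d-1$ at $r_c$, the projectors involve the inverse of the Gram matrix $B_\ell$ whose entries scale with powers of $r_c$ (here Assumption~\ref{assump:cR_free_family} is what keeps $B_\ell^{-1}$ from blowing up faster than the nominal rate), and the expansion coefficients $\psh{\widetilde p_k^I}{\widetilde\psi}$ must be expressed through the singular-expansion coefficients $\psi^I_{j00}$ of $\psi$ (governed by the recurrence \eqref{eq:psi_j_recurrence} of Theorem~\ref{theo:kato_cusp}) and shown to converge, as $r_c\to0$, to the corresponding coefficients of $\psi$ at the rate dictated by how many PAW functions $n_0$ are used. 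Assembling these scalings to certify that no hidden negative power of $r_c$ sneaks into the smooth-remainder norm — so that the cross terms in the Céa estimate really do produce the exponents in \eqref{eq:VPAW_convergence} and nothing worse — is the delicate step; the rest is the (lengthy but routine) Fourier-tail computation via Lemma~\ref{lem:spherical_bessel}.
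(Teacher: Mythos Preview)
Your approach is essentially the paper's: decompose $\widetilde\psi$ into a residual-cusp piece, a $d$-th-derivative-jump piece near the spheres, and a smooth remainder, invoke Propositions~\ref{prop:cusp}--\ref{prop:remainder} for the Fourier tails, and conclude via a Rayleigh-quotient/Céa bound. Two points deserve correction, however.

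First, your Fourier decay for the sphere-jump term is off by one power. A radial function with a $d$-th derivative jump at $r=r_c$ has Fourier coefficient $\int f(r)j_0(Kr)r^2\,dr = K^{-1}\int f(r)\sin(Kr)\,r\,dr$; integrating by parts $d{+}1$ times gives $K^{-(d+1)}$ from the oscillatory integral, and the extra $K^{-1}$ from $j_0$ yields a total decay of $K^{-(d+2)}$ (this is exactly Proposition~\ref{prop:d-th derivative}), not $K^{-(d+1)}$. With the correct rate the $H^1$-tail is $\sum_{|\mathbf K|>M}|\mathbf K|^2\cdot r_c^{-2(d-1)}|\mathbf K|^{-2(d+2)}\sim r_c^{-(2d-2)}M^{-(2d-1)}$ directly, with no need for the ``balancing'' you invoke. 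Your attempted patch (``after the squaring in the Céa estimate this is absorbed\ldots'') is not valid: the eigenvalue error \emph{is} the squared $H^1$ truncation error, so there is no further squaring to rescue a wrong exponent.

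Second, on the $r_c$-uniformity of the constant: rather than bounding the VPAW bilinear forms $a,s$ directly (which would require controlling $S^\mathrm{VPAW}$ and $H^\mathrm{VPAW}$ uniformly in $r_c$), the paper observes that the plane-wave Galerkin approximation of \eqref{eq:H_VPAW} is exactly the Rayleigh--Ritz approximation of the \emph{original} problem $H\psi=E\psi$ with trial space $(\mathrm{Id}+T)X_M$. One then applies the standard estimate (Proposition~\ref{prop:eig_error}) to $H$ with test function $(\mathrm{Id}+T)\widetilde\psi_M$, giving $|E_M-E|\le C\|(\mathrm{Id}+T)(\widetilde\psi_M-\widetilde\psi)\|_{H^1}^2$ with $C$ depending only on $H$. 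The $r_c$-uniformity then reduces to the single clean statement that $\|(\mathrm{Id}+T)f\|_{H^1}\le C\|f\|_{H^1}$ with $C$ independent of $r_c$ (Lemma~\ref{lem:Tcont}). This is the step that absorbs all the ``tracking of $r_c$-dependence through the PAW construction'' you flag in your last paragraph, and it is where Assumption~\ref{assump:cR_free_family} is actually used.
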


The VPAW method does not erase the cusps appearing in the molecular wave function $\psi$, hence in the asymptotic regime, the plane-wave convergence rate is the same as the brute force discretization of the original eigenvalue problem. 
The prefactor ${r_c}^{2 \min(2n_0,5)-2\varepsilon}$ can be significantly reduced by taking a small cut-off radius $r_c$. However in doing so, the second prefactor $\frac{1}{{r_c}^{2d-2}}$ can become dominant in the eigenvalue error. 
Balancing both error terms gives an optimal cut-off radius equal to $r_\mathrm{opt}=(M^\frac{2d-4}{4n_0+2d-2})^{-1}$. For $n_0=2$ and $d=5$ (which are typical for PAW simulations) and $r_c= r_\mathrm{opt}$, both error terms behave like $\frac{1}{M^{6}}$. 

Theorem \ref{theo:energy} holds for \emph{any} eigenvalue of $H$ given in \eqref{eq:H_per}, however numerical tests provided in Section \ref{sec:numerique} are restricted to the ground-state eigenvalue. They suggest that the VPAW method can be an efficient strategy to solve accurately the eigenvalue problem \eqref{eq:H_per} (see Figure \ref{fig:vpaw-custom}). 


\begin{note}
By incorporating $n_1$ functions for each angular momentum $\ell =1$ and $m \in \lbrace -1,0,1 \rbrace$, we can improve the convergence estimate \eqref{eq:VPAW_convergence} to
\begin{multline}
\label{eq:better_energy_estimate}
\forall 0 < r_c < r_\mathrm{min}, \ 0 < E_M - E \leq C \left( \frac{{r_c}^{2 \min(2n_0,5)-2\varepsilon}}{M^3} + \frac{{r_c}^{\min(2n_0,5)-\varepsilon}}{M^{4-\varepsilon} } \right. \\
\left. + \frac{{r_c}^{2\min(2n_1,5)-\varepsilon}}{M^5} + \frac{1}{{r_c}^{2d-2}} \frac{1}{M^{2d-1}} + o \left( \frac{1}{M^{7-\varepsilon}} \right) \right).
\end{multline}
The only difference between \eqref{eq:VPAW_convergence} and \eqref{eq:better_energy_estimate} is the prefactor of $\frac{1}{M^5}$. In our example (Section \ref{sec:numerique}), improvements for the computation of ground-state are marginal and visible for large plane-wave cutoffs (see Figure \ref{fig:vpaw-custom}). 
However, introducing PAW functions for $\ell=1$ might be beneficial for higher eigenvalues where in a pre-asymtotic regime, the prefactor of $\frac{1}{M^5}$ may be preponderant.
\end{note}

It is interesting to compare the VPAW method convergence with another full-potential approach like the augmented plane-wave (APW) method \cite{slater1937wave}. 
In the APW method, instead of modifying the Hamiltonian, a different basis set is used which is not sensitive to the cusps resulting from the Coulomb interaction with the nuclei. 
The new basis set is defined by partitioning the unit cell $\Gamma$  into two types of regions (the so-called \emph{muffin-tin division}):
\begin{enumerate}[label=\roman*)]
\item balls $B(\mathbf{R}_I,r_c)$, $I=1,\dots,M$;
\item the remaining interstitial region $\mathcal{D}$.
\end{enumerate}
The basis functions consist of augmentations of plane-waves:
\begin{equation}
\label{eq:APW_basis_function}
\omega_\mathbf{K}(\mathbf{r}) = \begin{cases}
e^{i \mathbf{K} \cdot \mathbf{r}} & \text{in } \mathcal{D} \\
\sumlim{\ell=0}{\ell_\mathrm{max}} \sumlim{m=-\ell}{\ell} \alpha_{\ell m}^\mathbf{K} \chi_\ell(r_I) Y_{\ell m}(\widehat{\mathbf{r}_I}) & \text{in each } B(\mathbf{R}_I,r_c)
\end{cases}
\end{equation}
where $\mathbf{r}_I = \mathbf{r}-\mathbf{R}_I$. The coefficients $(\alpha_{\ell m}^\mathbf{K})_{|m| \leq \ell \leq \ell_\mathrm{max}}$ are set to match the spherical harmonics expansion of $e^{i \mathbf{K} \cdot \mathbf{r}} $ at the boundaries of the balls $B(\mathbf{R}_I,r_c)$.
These basis functions are however not continuous at the boundary of the balls $B(\mathbf{R}_I,r_c)$, hence they do not belong to $H^1(\Gamma)$: the APW method is a \emph{nonconforming} method.
In \cite{chen2015numerical}, the convergence of the APW  method for a particular choice of $\chi_\ell$ is studied where each $\chi_\ell$ is a polynomial of degree less than $N$. 
The authors showed that the error on the eigenvalues of the problem \eqref{eq:H_per} by the APW method is bounded by 
\[
\forall s > \frac{3}{2}, \ | E^\mathrm{APW}_\eta - E | \leq \frac{C_s}{\eta^{s-\frac{3}{2}}},
\]
where $\eta = \min(M,\ell_\mathrm{max},N)$.

Although this bound holds for any $s > \frac{3}{2}$, the prefactor depends on $s$ and this dependency is not explicit in the paper. 
Moreover, in most situations, $\eta$ is equal to the maximal angular momentum $\ell_\mathrm{max}$. 
Hence, increasing this parameter is more and more costly since it introduces $(2\ell+1)(N+1)$  basis functions in the nonconforming method. 
On the other hand, the convergence of the VPAW method is already very good for $n_0 \leq 2$ PAW functions for $\ell=0$ (see Figure~\ref{fig:vpaw-custom}).

\section{Numerical results}
\label{sec:numerique}

In this section, we present some numerical results applied to the Hamiltonian $H$ in $[-\tfrac{L}{2},\tfrac{L}{2}]^3$
\begin{equation}
\label{eq:numerical_hamiltonian}
H = -\frac{1}{2} \Delta - \frac{Z}{\left|\mathbf{r}-\frac{\mathbf{R}}{2}\right|} - \frac{Z}{\left|\mathbf{r}+\frac{\mathbf{R}}{2}\right|},
\end{equation}
with periodic boundary conditions. The lowest eigenvalue is sought using iterative schemes, hence we are interested in the cost of the matrix-vector multiplication. 

The problem is solved using plane-waves. The kinetic operator is diagonal in the reciprocal space. The potential is discretized using a radial grid around the nuclei and a uniform grid in the rest of the domain. For the VPAW method, the following integrals are pre-computed:
\begin{enumerate}
\item $\psh{e_\mathbf{K}}{\widetilde{p}}$ : since $\widetilde{p}(\mathbf{r}) = p(r) Y_{\ell m}(\hat{\mathbf{r}})$  using \eqref{eq:Fourier-spherical-harm}, $\psh{e_\mathbf{K}}{\widetilde{p}}$ can be evaluated on a radial grid. 
\item $\psh{e_\mathbf{K}}{\phi-\widetilde{\phi}}$ : we proceed like for $\psh{e_\mathbf{K}}{\widetilde{p}}$ using a radial grid; 
\item $\psh{e_\mathbf{K}}{H(\phi-\widetilde{\phi})}$ : $H(\phi-\widetilde{\phi})$ is decomposed into a radial and a non-radial part. The radial part is evaluated on a radial grid and the non-radial part on a uniform grid. 
For non-linear approximations (Hartree-Fock and Kohn-Sham DFT), this term can be critical since it may be necessary to re-compute these integrals at each iteration. This is the main drawback of the VPAW method compared to the PAW method where this term does not exist.
\item $\psh{\phi-\widetilde{\phi}}{\phi-\widetilde{\phi}}$, $\psh{\phi-\widetilde{\phi}}{H(\phi-\widetilde{\phi})}$ : these integrals are computed using radial grids when possible or using 3D integration schemes. For nonlinear models, the last integral needs to be recomputed at each iteration, however, since there are $N_{\mathrm{paw}}^2$ of them, it is not too costly. 
\end{enumerate}

The numerical results using a Julia \cite{bezanson2017julia} homemade code are summarized in the following figures with $Z=3$, $R = 1$ and $L=5$. The atomic PAW function $\phi_k$ are the eigenfunctions of the hydrogenoid atom. 
For the pseudo atomic function $\widetilde{\phi}_k$, continuity of the function and of the first four derivatives are enforced (\emph{i.e.} $d=5$).
The lowest eigenvalue is computed using a conjugate-gradient algorithm stopped when the norm of the residual is less than $10^{-5}$.

For all the plots presented in this section, \emph{VPAW 1s} denotes the VPAW method with one PAW function for $\ell=0$ per atom, \emph{VPAW 2s} with two PAW functions for $\ell=0$ per atom and \emph{VPAW 2s1p} with two PAW functions for $\ell=0$ and one function for $\ell=1$, $|m|\leq 1$ per atom. 
The reference value for the lowest eigenvalue is given by the VPAW method for 200 plane waves per direction. Computation of the reference is out of reach by a direct plane-wave method. 
Figures \ref{fig:vpaw-custom}, \ref{fig:vpaw-cut-off-radius} and \ref{fig:vpaw-vs-paw} are log-log plots of the convergence of the lowest eigenvalue of \eqref{eq:numerical_hamiltonian} with respect to the number of plane-waves per direction for different choices of the PAW parameters.
Figure~\ref{fig:VPAW_rc_dependence_fixed_M} is a log-log plot of the energy difference
\newline

In Figure \ref{fig:vpaw-custom}, convergence rates of the energy for the VPAW method for a cutoff radius $r_c=\tfrac{R}{2}=0.5$ are presented. 
We clearly notice that the VPAW method converges faster than the direct method, gaining up to three orders of magnitude compared to a brute force plane-wave discretization. 
The convergence seems marginally faster when increasing the number of PAW functions per atom. This can be explained by the relatively large cut-off radius chosen for this example. 
We can distinctly see two regimes:
\begin{itemize}
    \item for $M \leq 20$, in the pre-asymptotic regime, the error on the eigenvalue is dominated by the term $\frac{1}{{r_c}^{2d-2}M^{2d-1}}$ as the estimated convergence rate suggests (numerically we observe a convergence rate $M^{-7.8}$ and theoretically $M^{-9}$ is expected). Since the same regularity of the atomic pseudo wave function $\widetilde{\phi}_k$ is used, it is not surprising to witness a similar behavior in that regime.
    \item for $M\geq 20$, the convergence rate is close to $M^{-4}$ which is the next error term given by Theorem~\ref{theo:energy}.
\end{itemize}

\begin{figure}[H]
\centering
\includegraphics[width=0.5\textwidth]{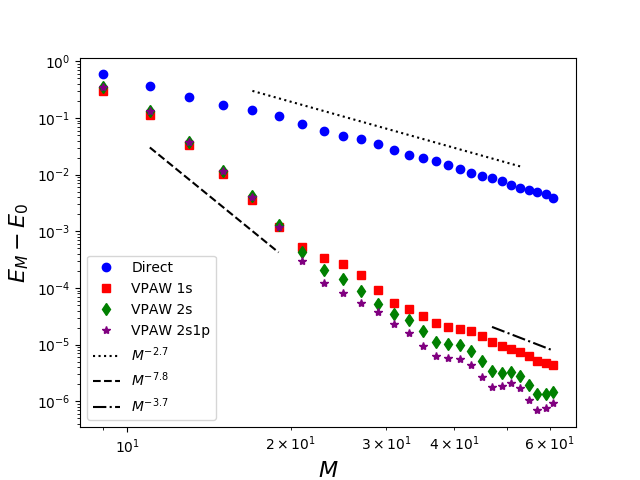}
\caption{Error on the lowest eigenvalue with the VPAW method for different choices of PAW functions.}
\label{fig:vpaw-custom}
\end{figure}

The size of the VPAW acting region can significantly impact the convergence rate in the pre-asymptotic regime (Figure \ref{fig:vpaw-cut-off-radius}). This plot suggests that there are different phases in the convergence of the VPAW eigenvalue:
\begin{enumerate}
\item for very low plane-wave cut-off ($M \leq 15$), the VPAW acting region is too small to be seen by the Fourier grid, hence no improvement is observed;
\item as the plane-wave cut-off grows, the VPAW eigenvalue converges very fast, since for this regime, the prefactors kill the $\frac{1}{M^{3}}$ and $\frac{1}{M^{4}}$ decay;
\item for a larger plane-wave cut-off $M \geq 50$, the convergence slows down since the prefactor for the  $\frac{1}{M^{4}}$ decay is not negligible anymore. Note that in this regime the error on the eigenvalue decreases as the VPAW cut-off radius $r_c$ is small, in agreement with Theorem~\ref{theo:energy}.
\end{enumerate}

\begin{figure}[H]
\centering
\includegraphics[width=0.5\textwidth]{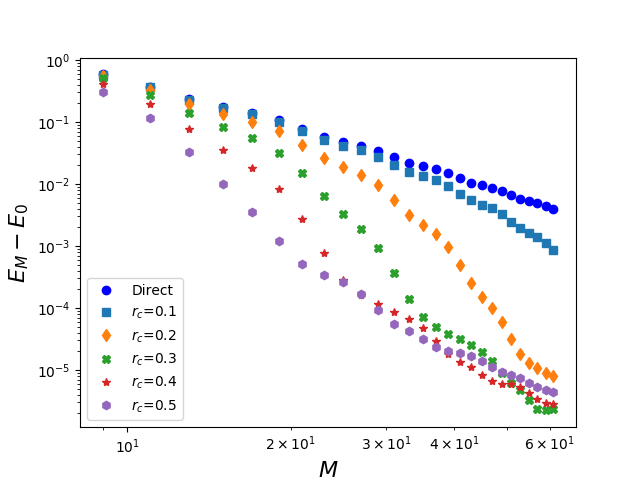}
\caption{Error on the lowest eigenvalue with the VPAW method for different choices of cut-off radius.}
\label{fig:vpaw-cut-off-radius}
\end{figure}

Figure~\ref{fig:VPAW_rc_dependence_fixed_M} illustrates the dependence on the VPAW cut-off radius $r_c$ of the error for the VPAW 1s method. The numerically observed behavior of the error is approximately the one predicted by Theorem~\ref{theo:energy} ($r_c^{-8}$ for small $M$ and $r_c^2$ for large $M$)

\begin{figure}[H]
    \centering
    \begin{subfigure}[t]{0.49\textwidth}
        \centering
        \includegraphics[width=\textwidth]{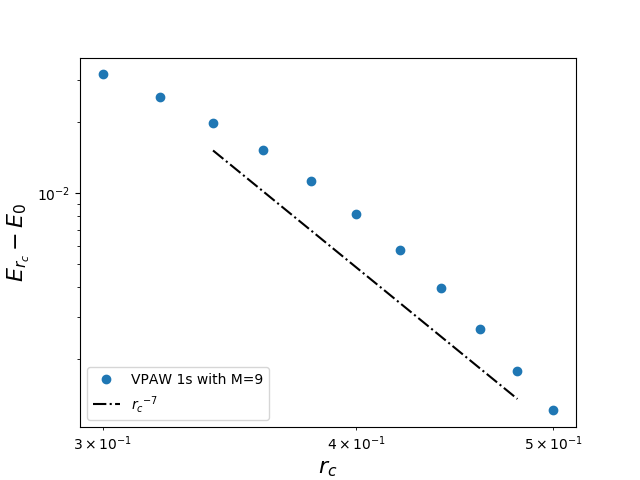}
        \caption{VPAW 1s with $M=9$}
    \end{subfigure}
    \begin{subfigure}[t]{0.49\textwidth}
        \centering
        \includegraphics[width=\textwidth]{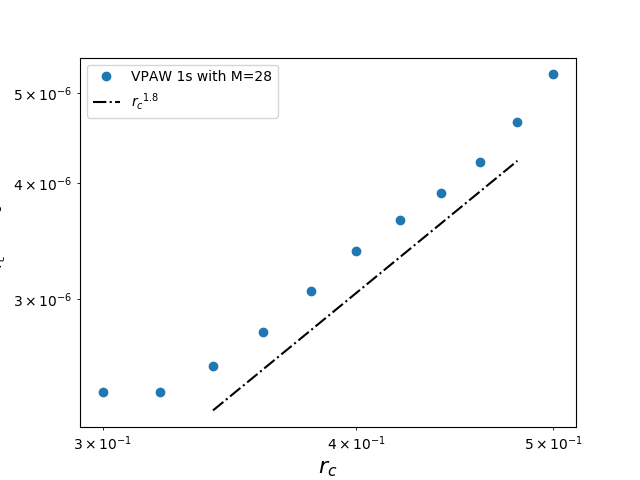}
        \caption{VPAW 1s with $M=28$}
    \end{subfigure}
    \caption{Error on the lowest eigenvalue with respect to the cut-off radius $r_c$ for a fixed plane-wave cut-off $M$}
    \label{fig:VPAW_rc_dependence_fixed_M}
\end{figure}

In Figure \ref{fig:vpaw-vs-paw}, a comparison between the original PAW method and the VPAW method is provided. 
Notice that the convergence of the eigenvalue for the PAW method is not monotone because the limit is below $E_0$.
If very accurate results are awaited on the lowest eigenvalue of \eqref{eq:numerical_hamiltonian}, the VPAW method seems the method of choice. 

\begin{figure}[H]
\centering
\includegraphics[width=0.5\textwidth]{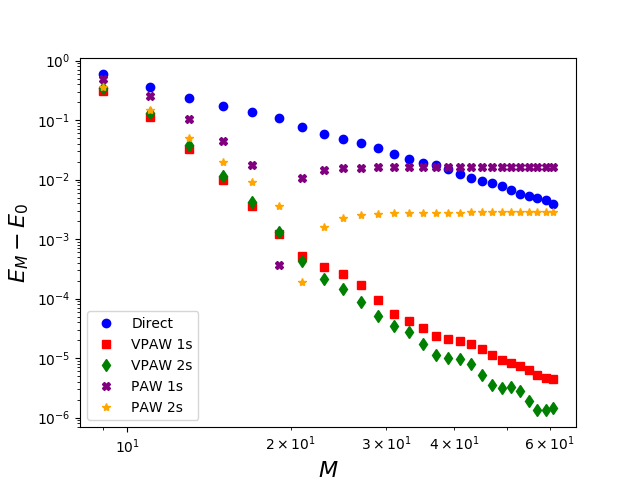}
\caption{Error on the lowest eigenvalue with the PAW and VPAW methods.}
\label{fig:vpaw-vs-paw}
\end{figure}

\section{Concluding remarks}

This paper provides an analysis of the VPAW method for the plane-wave discretization of eigenvalue problems of periodic linear Hamiltonians with Coulomb potentials. 
It theoretically and numerically shows the efficiency of the method to deal with Coulomb type singularities. 
Numerical tests to Kohn-Sham Hamiltonians are in preparation to assess the method on models closer to practice.
This work also opens the way of an analysis of the PAW method for 3D models. 
The PAW and VPAW methods are closely related. Since the VPAW method gives the same eigenvalues as the original Hamiltonian, using the connection between the PAW and VPAW methods should give an estimation of the error introduced by the PAW method. 
This approach has already proven to be successful in the analysis of this error for a one-dimensional model \cite{blanc2017paw}.

\section{Proof of Theorem~\ref{theo:energy}}
\label{sec:proofs}

The general idea of the proof is to isolate the main convergence difficulty which is the cusp of the eigenfunction $\psi$ located at each nucleus and see how the VPAW method reduces it. Although the VPAW method reduces the cusps of the VPAW eigenfunction $\widetilde{\psi}$, it introduces a derivative jump on the spheres $\partial B(\mathbf{R}_I,r_c)$ that blows up as the cut-off radius shrinks. 
As in \cite{blanc2017vpaw1d}, we split the pseudo wave function $\widetilde{\psi}$ into three parts using the singular expansion \eqref{eq:weighted_sobolev}.
Let $\omega$ be a smooth nonnegative cut-off function such that $\omega(r) = g(\frac{r}{r_c}) $ where $g$ satisfies
\begin{itemize}
\item $g$ is equal to 1 in $B(0,1/4)$,
\item $\mathrm{supp}(g) \subset B(0,1/2)$. 
\end{itemize}
Therefore, $\omega$ satisfies $\mathrm{supp}(\omega) \subset B(0,r_c/2)$, $\mathrm{supp}(1-\omega)^c \subset B(0,r_c/4)$ and $\|\omega^{(k)}\|_{L^\infty(0,r_c)} \leq \frac{C}{{r_c}^k}$.
Let $\eta \in \cK^{\infty,\frac{5}{2}+N-\varepsilon}(\Gamma)$ be the remainder of the singular expansion \eqref{eq:weighted_sobolev} applied to $\psi$:
\begin{equation*}
\eta(\mathbf{r}) = \psi(\mathbf{r}) - \sumlim{I=1}{N_\mathrm{at}} \omega(|\mathbf{r}-\mathbf{R}_I|) \sumlim{j=0}{N} c_j^I(\widehat{\mathbf{r-R}_I}) |\mathbf{r-R}_I|^j,
\end{equation*}
where by Theorem~\ref{theo:kato_cusp} $c^I_j = \sumlim{|m| \leq \ell \leq j}{} \psi^I_{j\ell m} Y_{\ell m}$.
By Equation~\eqref{eq:I+Tpsi}, we have:
\begin{align}
\widetilde{\psi}(\mathbf{r}) & = \psi(\mathbf{r}) - \sumlim{I=1}{N_\mathrm{at}} \sumlim{k=1}{N_\mathrm{paw}} \psh{\widetilde{p}_k^I}{\widetilde{\psi}}  (\phi^I_k(\mathbf{r-R}_I) - \widetilde{\phi}^I_k(\mathbf{r-R}_I)), \nonumber \\
& =\sumlim{I=1}{N_\mathrm{at}} \omega(|\mathbf{r}-\mathbf{R}_I|) \sumlim{j=0}{N} c_j^I(\widehat{\mathbf{r-R}_I}) |\mathbf{r-R}_I|^j + \eta(\mathbf{r}) - \sumlim{I=1}{N_\mathrm{at}} \sumlim{k=1}{N_\mathrm{paw}} \psh{\widetilde{p}_k^I}{\widetilde{\psi}}(\phi^I_k(\mathbf{r-R}_I) - \widetilde{\phi}^I_k(\mathbf{r-R}_I)),\nonumber \\
& =\sumlim{I=1}{N_\mathrm{at}} \omega(|\mathbf{r}-\mathbf{R}_I|) \left( \sumlim{j=0}{N} c_j^I(\widehat{\mathbf{r-R}_I}) |\mathbf{r-R}_I|^j -\sumlim{k=1}{N_\mathrm{paw}} \psh{\widetilde{p}_k^I}{\widetilde{\psi}}(\phi^I_k(\mathbf{r-R}_I) - \widetilde{\phi}^I_k(\mathbf{r-R}_I)) \right) \nonumber \\
& \qquad \qquad + \sumlim{I=1}{N_\mathrm{at}} (1- \omega(|\mathbf{r}-\mathbf{R}_I|) \sumlim{k=1}{N_\mathrm{paw}} \psh{\widetilde{p}_k^I}{\widetilde{\psi}}(\phi^I_k(\mathbf{r-R}_I) - \widetilde{\phi}^I_k(\mathbf{r-R}_I)) + \eta(\mathbf{r}). \label{eq:psi_decomposition}
\end{align}

The first part corresponds to the cusp of the pseudo wave function in a neighborhood of a nucleus. The second part is the $d$-th derivative jump caused by the lack of regularity at the sphere. 
The last part is the remainder appearing in the singular expansion of the original wave function $\psi$. In this section, we analyze the decay of the Fourier coefficients of the three parts separately.
\newline

We study the plane-wave convergence of the error on the eigenvalue for the VPAW method where $\ell_\mathrm{max}=0$. In this setting, for our purpose, a singular expansion for $N=1$
is sufficient. Since only PAW functions for the angular momentum $\ell =0$, $m=0$ are considered, the $00$ index in the PAW functions is dropped. 

\begin{prop}
\label{prop:cusp}
Let $(c^I_j)_{0 \leq j \leq 1}$ be the functions of the singular expansion \eqref{eq:weighted_sobolev} of $\psi$ . Let $n \geq 1$ be the number of PAW functions associated to the angular momentum $\ell = 0, m=0$. Then there exists a positive constant $C$ independent of $r_c$ and $K$ such that for all $I=1,\dots,N_\mathrm{at}$, for all $\varepsilon >0$ and for $K$ sufficiently large,  
\begin{equation}
\forall \, 0 < {r_c} < r_\mathrm{min}, \ \left| \itg{\Gamma}{}{\omega(r) \left( \sumlim{j=0}{1}c^I_j(\hat{\mathbf{r}})r^j - \sumlim{k=1}{n} \psh{\widetilde{p}_k^I}{\widetilde{\psi}}(\phi^I_k(\mathbf{r}) - \widetilde{\phi}^I_k(\mathbf{r})) \right) e^{-i \mathbf{K}\cdot \mathbf{r}}}{\mathbf{r}} \right| \leq \frac{C{r_c}^{\min(2n,5)-\varepsilon}}{K^{4}} .
\end{equation}
\end{prop}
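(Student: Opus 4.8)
The plan is to turn the three–dimensional Fourier integral into one–dimensional spherical–Bessel transforms and read off the decay from Lemma~\ref{lem:spherical_bessel}. Translate so that $\mathbf R_I$ is the origin and write $f^I$ for the bracket in the statement times $\omega$, so that the quantity to estimate is the plane-wave coefficient $\widehat{f^I}(\mathbf K)$. The first step is an explicit formula for the projections $\langle\widetilde p^I_k,\widetilde\psi\rangle$, which are not free: applying $\langle\widetilde p^I_j,\cdot\rangle$ to the identity $\widetilde\psi=\psi-\sum_{J,k}\langle\widetilde p^J_k,\widetilde\psi\rangle(\phi^J_k-\widetilde\phi^J_k)$, using that the augmentation balls of distinct nuclei are disjoint and that $\langle\widetilde p^I_j,\widetilde\phi^I_k\rangle=\delta_{jk}$, yields the finite linear system $\sum_k\langle\widetilde p^I_j,\phi^I_k\rangle\langle\widetilde p^I_k,\widetilde\psi\rangle=\langle\widetilde p^I_j,\psi\rangle$, whose matrix is invertible by Assumption~\ref{assump:2}. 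Thus $\langle\widetilde p^I_k,\widetilde\psi\rangle$ is an explicit linear functional of the spherical average of $\psi$ on $B(\mathbf 0,r_c)$, and — since the $\phi^I_k-\widetilde\phi^I_k$ are radial (proportional to $Y_{00}$) and supported in $B(\mathbf 0,r_c)$ — the PAW correction enters only the $\ell=m=0$ channel of $f^I$.

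Next I decompose $f^I$ into spherical–harmonic channels. The $\ell=m=0$ part is $\omega(r)\,g(r)\,Y_{00}$ with $g(r)=\psi^I_{0,00}+\psi^I_{1,00}\,r-\sum_k\langle\widetilde p^I_k,\widetilde\psi\rangle\big(R_{k0}(r)-\widetilde R_{k0}(r)\big)$, a smooth function of $r\geq 0$ whose regularity as a function on $\R^3$ is governed by the lowest odd-order coefficient of its Taylor expansion at $0$. The core of the argument is to show that these odd coefficients vanish up to order $r^{\min(2n,5)}$, equivalently that the $n$ PAW functions reproduce the cusp of the spherical average of $\psi$ to that order, with coefficients $\langle\widetilde p^I_k,\widetilde\psi\rangle$ that stay bounded as $r_c\to 0$. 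For this I combine: the recurrence \eqref{eq:psi_j_recurrence} of Theorem~\ref{theo:kato_cusp}, which fixes the odd Taylor coefficients of the spherical average of $\psi$ in terms of $\psi^I_{0,00},\psi^I_{1,00}$ and the potential; the analogous recurrence \eqref{eq:radial_schrodinger} for each $R_{k0}$, with the same Coulomb coefficient $Z$, together with Assumption~\ref{assump:radial_PAW_function_cusp} ($R_{k0}(0)\neq0$), which ensures the $R_{k0}-\widetilde R_{k0}$ carry genuine cusps whose leading coefficients can be prescribed; and Assumption~\ref{assump:cR_free_family}, which (as in the well-posedness discussion, and Proposition~2.3 of \cite{blanc2017vpaw1d}) makes $B_\ell$ and the matrix of Assumption~\ref{assump:2} invertible with bounds uniform in $r_c$. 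The cap at $5$ and the loss of $\varepsilon$ reflect that only the singular expansion to order $N=1$ is used, whose remainder lies in $H^{7/2-\varepsilon}_{\mathrm{per}}(\Gamma)$ (Remark~\ref{rem:remainder_reg}); the contributions of the channels $\ell\geq1$ are smooth functions supported in $B(\mathbf 0,r_c/2)$ and can be carried along with that remainder.

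For the decay, write $\omega(r)=g_0(r/r_c)$: every term $\omega(r)\,r^{j}$ equals $r_c^{\,j}$ times a fixed compactly supported profile evaluated at $\mathbf r/r_c$, so its Fourier coefficient is $r_c^{\,j+3}$ times the Fourier transform of that profile at $r_c\mathbf K$, whose decay Lemma~\ref{lem:spherical_bessel} provides (rapid for even $j$ — smooth profile — and of order $|r_c\mathbf K|^{-(j+3)}$ for odd $j$). Applying this to $g(r)=(\text{smooth even part})+O\big(r^{\min(2n,5)+1}\big)$, using the bound $r_c^{\,j+3}$ when $r_c|\mathbf K|\lesssim1$ and the $|r_c\mathbf K|^{-(j+3)}$ decay when $r_c|\mathbf K|\gtrsim1$, and optimizing over the free exponent, gives $|\widehat{f^I}(\mathbf K)|\leq C\,r_c^{\min(2n,5)-\varepsilon}|\mathbf K|^{-4}$ for $|\mathbf K|$ large; summing the finitely many angular channels yields the statement.

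I expect the main obstacle to be the uniform-in-$r_c$ control required in the cancellation step. The projector functions are built from $B_\ell^{-1}$ and a cut-off $\chi$ supported in $(0,r_c)$, so crude estimates produce negative powers of $r_c$; one must use Assumption~\ref{assump:cR_free_family} carefully to see that these powers cancel, that $\langle\widetilde p^I_k,\widetilde\psi\rangle=O(1)$, and that the reproduced cusp coefficients match those of $\psi$ up to an error of the correct order in $r_c$. A secondary nuisance is that $\omega$ is supported in $B(\mathbf 0,r_c/2)$ and equals $1$ only on $B(\mathbf 0,r_c/4)$, while the projectors live on the larger ball $B(\mathbf 0,r_c)$, so the annulus $r_c/4\leq r\leq r_c$ must be handled separately.
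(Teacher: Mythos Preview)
Your reduction to the $\ell=0$ radial channel and the derivation of $\langle\widetilde p,\widetilde\psi\rangle=A^{-1}\langle\widetilde p,\psi\rangle$ are correct and match the paper. The central claim, however, is mistaken: you assert that the odd Taylor coefficients of $g(r)=\psi^I_{0,00}+\psi^I_{1,00}\,r-\langle\widetilde p,\widetilde\psi\rangle^T(\cR-\widetilde\cR)$ vanish up to order $r^{\min(2n,5)}$, i.e.\ $g=(\text{even})+O\big(r^{\min(2n,5)+1}\big)$. This is false. The $r^{2j+1}$ coefficient of $g$ for $j\geq 1$ is simply $-\langle\widetilde p,\widetilde\psi\rangle^T\zeta_{2j+1}$ (since $\widetilde\cR$ is even and only $c_0+c_1 r$ appears from $\psi$), and by Lemma~\ref{lem:zeta_k} the vectors $\zeta_3,\zeta_5,\dots$ are generically nonzero, so these coefficients do not vanish. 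The higher odd terms contribute $O(K^{-6}),O(K^{-8}),\dots$ with no gain in $r_c$; they are subleading in $K$ but are not what produces the $r_c$ prefactor.

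What the paper actually shows is that only the $r^1$ coefficient $-Z\big(\psi(0)-\langle\widetilde p,\widetilde\psi\rangle^T\cR(0)\big)$ governs the $K^{-4}$ rate, and that \emph{this single scalar} is of size $O(r_c^{\min(2n,5)-\varepsilon})$. The mechanism is indirect: one does not prove that the actual projector values reproduce the expansion of $\psi_{00}$; instead one introduces auxiliary $\alpha\in\R^n$ chosen so that the singular expansions of $\psi_{00}$ and $\alpha^T\cR$ agree to as many orders as possible (Lemma~\ref{lem:approx}), writes $\psi(0)-\langle\widetilde p,\widetilde\psi\rangle^T\cR(0)=\big(\psi_{00}(0)-\alpha^T\cR(0)\big)-\langle\widetilde p,\widetilde\psi-\alpha^T\widetilde\Phi\rangle^T\cR(0)$, kills the first term by the choice of $\alpha$, and bounds the second by $Cr_c^{-1/2}\|\psi_{00}-\alpha^T\cR\|_{L^2(0,r_c)}$ via Lemma~\ref{lem:tilde_p_stuff}. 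Your attribution of the cap at $5$ to ``only $N=1$ is used'' is also off: the proof of Lemma~\ref{lem:approx} matches the expansions of $\psi_{00}$ and $\cR$ out to order $\min(2n,5)$, and the cap arises because for $n\geq 3$ the recurrences of Theorem~\ref{theo:kato_cusp} (with potential $V^I$) and of the atomic problem (with potential $W_{\mathrm{at}}$) disagree at fifth order---the matrix identity $M_2M_1^{-1}N_1=N_2$ fails there.
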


This proposition states that the cusp of the VPAW eigenfunction $\widetilde{\psi}$ is reduced by a factor of order ${r_c}^{\min(2n,5)-\varepsilon}$. 
The proof can be found in Section~\ref{subsec:proof_prop_cusp_reduction}.

\begin{prop}
\label{prop:d-th derivative}
Let $n \geq 1$ be the number of PAW functions associated to the angular momentum $\ell = 0, m=0$. There exists a positive constant $C$ independent of $r_c$ such that for all $I=1,\dots,N_\mathrm{at}$ and for $K$ sufficiently large we have
\begin{equation}
\forall \, 0 < {r_c} < r_\mathrm{min}, \ \left| \itg{\Gamma}{}{(1-\omega(r))\sumlim{k=1}{n} \psh{\widetilde{p}_k^I}{\widetilde{\psi}}(\phi^I_k(\mathbf{r}) - \widetilde{\phi}^I_k(\mathbf{r})) e^{-i \mathbf{K}\cdot \mathbf{r}}}{\mathbf{r}} \right| \leq \frac{C}{{r_c}^{d-1} K^{d+2}}.
\end{equation}
\end{prop}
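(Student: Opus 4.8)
The plan is to reduce the three-dimensional Fourier integral to a one-dimensional radial integral, and then to estimate the decay of that radial integral using the smoothness of the integrand except at $r=r_c$, where the $d$-th derivative jumps. Since only $\ell=m=0$ PAW functions are used, each $\phi^I_k-\widetilde\phi^I_k$ is a radial function supported in $B(0,r_c)$, and so is $(1-\omega)\sum_k\langle\widetilde p^I_k,\widetilde\psi\rangle(\phi^I_k-\widetilde\phi^I_k)$, which I will call $f(r)$. Using the standard formula for the Fourier transform of a radial function (the relation labelled \eqref{eq:Fourier-spherical-harm} in the paper), the integral $\int_\Gamma f(r)e^{-i\mathbf K\cdot\mathbf r}\,\mathrm d\mathbf r$ equals (up to constants) $\frac{1}{|\mathbf K|}\int_0^{r_c} r f(r)\sin(|\mathbf K| r)\,\mathrm dr$. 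So the whole problem becomes: bound $\bigl|\int_0^{r_c} r f(r)\sin(Kr)\,\mathrm dr\bigr|$ by $C r_c^{d-1}/K^{d+1}$, which after the $1/|\mathbf K|$ prefactor gives the claimed $1/(r_c^{d-1}K^{d+2})$ — wait, I need $C/r_c^{d-1}$, not $Cr_c^{d-1}$; I will come back to the sign of the exponent below.

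Next I would integrate by parts in the radial integral $d+1$ times. The function $g(r):=rf(r)$ is, by construction, smooth on $[0,r_c)$ and on $(r_c,\infty)$ (it vanishes identically for $r>r_c$, and is smooth near $0$ because $\widetilde\phi^I_k$ is a polynomial in $r^2$ there and $\phi^I_k$ is smooth away from the cusp — here is where the $\ell=0$ restriction and Assumption~\ref{assump:radial_PAW_function_cusp} matter), but it has a jump in its $d$-th derivative at $r=r_c$ inherited from the matching of $\widetilde R_{n\ell}$ with $R_{n\ell}$ only up to $d-1$ derivatives. Each integration by parts against $\sin(Kr)$ or $\cos(Kr)$ produces a factor $1/K$ and moves a derivative onto $g$; boundary terms at $r=0$ vanish (all relevant derivatives of $g$ vanish there, or are killed by $\sin(0)=0$), and boundary terms at $r=r_c$ vanish for the first $d-1$ integrations by parts because $g,\dots,g^{(d-1)}$ are continuous there and $g^{(j)}(r_c^+)=0$. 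After $d$ integrations by parts one picks up the jump $[g^{(d)}]_{r_c}$, and the key quantitative input is the bound $\bigl|[g^{(d)}]_{r_c}\bigr|\le C/r_c^{d-1}$ together with $\|g^{(d+1)}\|_{L^1(0,r_c)}\le C/r_c^{d-1}$ — these are exactly the estimates on the $d$-th derivative jump announced just before Proposition~\ref{prop:d-th derivative} in the overview (the $\bigl[\int_{S(\mathbf R_I,1)}\widetilde\psi^{(d)}\bigr]_{r_c}$ bound), combined with control of the coefficients $\langle\widetilde p^I_k,\widetilde\psi\rangle$, which are $O(1)$ uniformly in $r_c$ by Assumption~\ref{assump:2} and the boundedness of $\widetilde\psi$.

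I would therefore organize the proof as: (i) reduce to the radial integral via \eqref{eq:Fourier-spherical-harm}; (ii) record the scaling estimates on $g$, its derivatives on $(0,r_c)$, and the jump $[g^{(d)}]_{r_c}$, tracking powers of $r_c$ carefully (the factor $r_c^{-(d-1)}$ comes from differentiating $d$ times a function that varies on scale $r_c$ but whose amplitude near $r_c$ is $O(r_c^{\,2})$ from the polynomial part, or more simply from the cited jump estimate); (iii) perform $d+1$ integrations by parts, controlling boundary terms and the remaining oscillatory integral by $\|g^{(d+1)}\|_{L^1}/K^{d+1}$; (iv) reassemble, using $|\mathbf K|\ge c K$, to obtain $C/(r_c^{d-1}K^{d+2})$. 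The main obstacle I anticipate is step (ii): getting the \emph{uniform in $r_c$} constant in the jump estimate $|[g^{(d)}]_{r_c}|\le C/r_c^{d-1}$ honestly, since this requires knowing precisely how the polynomial coefficients $c_{2k}$ of $\widetilde R_{n\ell}$ — determined by matching $d-1$ derivatives at $r_c$ — depend on $r_c$, and controlling $\langle\widetilde p^I_k,\widetilde\psi\rangle$ uniformly; this is presumably where Assumptions~\ref{assump:cR_free_family} and \ref{assump:2} are genuinely used, via a Cramer's-rule argument for the matching system whose condition number must be shown to behave correctly as $r_c\to0$.
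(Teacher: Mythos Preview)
Your overall scheme is exactly the paper's: reduce via \eqref{eq:Fourier-spherical-harm} to $\frac{1}{K}\int_0^{r_c} g(r)\sin(Kr)\,\mathrm dr$ with $g(r)=r(1-\omega(r))\langle\widetilde p,\widetilde\psi\rangle^T(\cR-\widetilde\cR)(r)$, integrate by parts $d+1$ times (boundary terms at $0$ vanish because $1-\omega$ vanishes to infinite order there), pick up the jump $[g^{(d)}]_{r_c}$, and control the remaining integral by the $L^\infty$ bound on $g^{(d+1)}$ over an interval of length $O(r_c)$. The paper packages the two needed inputs as Lemma~\ref{lem:d-th der jump}: $\bigl|\langle\widetilde p,\widetilde\psi\rangle^T[\widetilde\cR^{(d)}]_{r_c}\bigr|\le C r_c^{-(d-1)}$ and $\bigl\|\langle\widetilde p,\widetilde\psi\rangle^T(\cR^{(k)}-\widetilde\cR^{(k)})\bigr\|_{L^\infty(0,r_c)}\le C r_c^{-k}$, from which the final bound follows as you describe.

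The one point where your outline would run into trouble is the heuristic that $\langle\widetilde p^I_k,\widetilde\psi\rangle$ are individually $O(1)$ uniformly in $r_c$. For $n\ge 2$ this is generally false: the Gram matrix $B_\ell=\bigl(\int_0^{r_c}\chi\,\widetilde R_j\widetilde R_k\,r^2\,\mathrm dr\bigr)$ degenerates as $r_c\to 0$ (all $\widetilde R_k$ tend to constants, so $B_\ell$ becomes nearly rank one), and the individual projector weights can blow up like a negative power of $r_c$. Bounding the factors separately would then give $r_c^{-(d-1)-\alpha}$ for some $\alpha>0$, which is not enough. The paper never isolates $\langle\widetilde p,\widetilde\psi\rangle$; Lemma~\ref{lem:d-th der jump} is proved by writing the \emph{combination} $\langle\widetilde p,\widetilde\psi\rangle^T[\widetilde\cR^{(d)}]_{r_c}$ in coordinates adapted to the even-polynomial basis $(P_k)$ and showing a structural cancellation: in those coordinates $\int_0^1\chi(t)\psi_{00}(r_ct)Q(t)t^2\,\mathrm dt=\psi(0)e_0+O(r_c)$, while $(C^{(Q)}_{r_c})^T(C^{(Q)}_{r_c}G_{r_c})^{-T}\bigl(r_c^d\cR^{(d)}(r_c)-C_{r_c}P^{(d)}(1)\bigr)$ has vanishing first entry up to $O(r_c)$. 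The orthogonality of these leading parts is what buys the extra power of $r_c$, and it uses Assumption~\ref{assump:radial_PAW_function_cusp} (so that $(\cE^j\zeta_0)_j$ is a basis) together with the $r_c$-scaling of the matching matrix from Lemma~\ref{lem:C_1C2}. Your ``Cramer's rule on the matching system'' instinct is pointing at the right place, but the mechanism is this cancellation in the product, not an $O(1)$ bound on the coefficients.
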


By reducing the cusp at a nucleus, the VPAW method introduces a derivative jump but for a higher order derivative. 
The proof can be found in Section~\ref{subsec:proof_prop_dth_derivative}.

\begin{prop}
\label{prop:remainder}
Let $\eta$ be the remainder of the expansion \eqref{eq:psi_decomposition} for $N=1$. Let $\eta_{M}$ be the truncation to the wavenumber $M$ of the plane-wave expansion of $\eta$. Then for all $\varepsilon >0$, we have
\begin{equation}
\| \eta_{M} - \eta \|_{H^1_\mathrm{per}} \leq \frac{1}{M^{5/2-\varepsilon}} \|\eta\|_{H^{7/2-\varepsilon}_\mathrm{per}}.
\end{equation}
\end{prop}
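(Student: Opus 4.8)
The plan is to deduce Proposition~\ref{prop:remainder} from the Sobolev regularity of $\eta$ together with the elementary tail estimate for truncated Fourier series on $\Gamma=[-\tfrac12,\tfrac12]^3$.

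First I would pin down the regularity of $\eta$. By Theorem~\ref{theo:psi_well_behaved} the eigenfunction $\psi$ is asymptotically well-behaved, and Remark~\ref{rem:remainder_reg} then gives, for $N=1$, that the remainder of the singular expansion of $\psi$ belongs to $H^{7/2-\varepsilon}_\mathrm{per}(\Gamma)$ for every $\varepsilon>0$. One small point must be checked: the function $\eta$ of \eqref{eq:psi_decomposition} is built with the $r_c$-dependent cut-off $\omega(r)=g(r/r_c)$ rather than with the fixed cut-off $\omega_\mathrm{fix}$ of Definition~\ref{def:weighted_sobolev_with_asymtotic}. The difference between the two remainders equals $\sumlim{I=1}{N_\mathrm{at}} (\omega_\mathrm{fix}-\omega)(|\mathbf r-\mathbf R_I|)\sumlim{j=0}{1} c_j^I(\widehat{\mathbf{r-R}_I})\,|\mathbf r-\mathbf R_I|^{j}$, which is $\mathcal R$-periodic, vanishes in a neighbourhood of each $\mathbf R_I$, and agrees with a $C^\infty$ function away from the $\mathbf R_I$ (each $|\mathbf r-\mathbf R_I|^{j}Y_{\ell m}(\widehat{\mathbf{r-R}_I})$ is smooth on $\R^3\setminus\{\mathbf R_I\}$); hence it is smooth and $\eta\in H^{7/2-\varepsilon}_\mathrm{per}(\Gamma)$ as well. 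The growth of $\|\eta\|_{H^{7/2-\varepsilon}_\mathrm{per}}$ as $r_c\to 0$ (coming from the derivatives of $\omega$) is left implicit, exactly as in the statement.

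Next I would expand $\eta=\sum_{\mathbf K\in\mathcal R^*}\widehat\eta_{\mathbf K}\,e_{\mathbf K}$, so that $\eta_M$ is the $L^2$-orthogonal projection of $\eta$ onto $\mathrm{span}\{e_{\mathbf K}:|\mathbf K|\le M\}$ and, using the Fourier characterisation of the periodic Sobolev norms,
\[
\|\eta_M-\eta\|_{H^1_\mathrm{per}}^2=\sum_{|\mathbf K|>M}(1+|\mathbf K|^2)\,|\widehat\eta_{\mathbf K}|^2 .
\]
For $|\mathbf K|>M$ one writes $1+|\mathbf K|^2=(1+|\mathbf K|^2)^{7/2-\varepsilon}(1+|\mathbf K|^2)^{-(5/2-\varepsilon)}\le M^{-(5-2\varepsilon)}(1+|\mathbf K|^2)^{7/2-\varepsilon}$, whence
\[
\|\eta_M-\eta\|_{H^1_\mathrm{per}}^2\le\frac{1}{M^{5-2\varepsilon}}\sum_{|\mathbf K|>M}(1+|\mathbf K|^2)^{7/2-\varepsilon}|\widehat\eta_{\mathbf K}|^2\le\frac{1}{M^{5-2\varepsilon}}\,\|\eta\|_{H^{7/2-\varepsilon}_\mathrm{per}}^2 ,
\]
and taking square roots gives the claimed bound. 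Moreover, since the series on the right is convergent, its tail tends to $0$, so one in fact gets the slightly stronger $\|\eta_M-\eta\|_{H^1_\mathrm{per}}=o(M^{-(5/2-\varepsilon)})$, which is what feeds the $o(M^{-5+\varepsilon})$ term in Theorem~\ref{theo:energy}.

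There is essentially no technical obstacle: the Fourier tail estimate is routine and the regularity input is quoted from results already established above. The only step I would treat with care is the bookkeeping between the $r_c$-dependent and the fixed cut-offs described in the second paragraph, which is why I single it out; once that is settled, the proposition is immediate.
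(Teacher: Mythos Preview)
Your proof is correct and follows the same approach as the paper, which simply states that the proposition is ``a direct consequence of the regularity of the remainder of the expansion~\eqref{eq:weighted_sobolev} given by Theorem~\ref{theo:psi_well_behaved} (see Remark~\ref{rem:remainder_reg}).'' You supply the standard Fourier tail computation that the paper leaves implicit, and you additionally flag and resolve the bookkeeping issue between the $r_c$-dependent cut-off $\omega(r)=g(r/r_c)$ used in \eqref{eq:psi_decomposition} and the fixed cut-off of Definition~\ref{def:weighted_sobolev_with_asymtotic}---a point the paper does not comment on.
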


This is a direct consequence of the regularity of the remainder of the expansion~\eqref{eq:weighted_sobolev} given by Theorem~\ref{theo:psi_well_behaved} (see Remark~\ref{rem:remainder_reg}).

\subsection{Proof of Proposition~\ref{prop:cusp}}
\label{subsec:proof_prop_cusp_reduction}

Without loss of generality, we assume that there is a nucleus located at $\mathbf{R}_I=0$. Since the estimate in Proposition~\ref{prop:cusp} and \ref{prop:d-th derivative} do not depend on the atomic site, we will drop the upper index $I$. 

The following notation is introduced
\begin{align*}
p({r}) & := (p_1({r}), \dots, p_{n}({r}))^T \in \R^n, \\
\widetilde{p}(\mathbf{r}) & := (\widetilde{p}_1(\mathbf{r}), \dots, \widetilde{p}_{n}(\mathbf{r}))^T \in \R^n, \\
\psh{\widetilde{p}}{f} & := \left(\psh{\widetilde{p}_1}{f}, \dots, \psh{\widetilde{p}_{n}}{f}\right)^T \in \R^n, \forall \, f \in L^2_\mathrm{per}(\Gamma),\\
\Phi(\mathbf{r}) & := (\phi_1(\mathbf{r}), \dots, \phi_{n}(\mathbf{r}))^T \in \R^n, \\
\widetilde{\Phi}(\mathbf{r}) & := (\widetilde{\phi}_1(\mathbf{r}), \dots, \widetilde{\phi}_{n}(\mathbf{r}))^T \in \R^n, \\
\cR(r) & := (R_1(r), \dots, R_n(r))^T  \in \R^n, \\
\widetilde{\cR}(r) & := (\widetilde{R}_1(r), \dots, \widetilde{R}_n(r))^T  \in \R^n.
\end{align*}
For a function $f \in L^2([-\frac{1}{2},\frac{1}{2}]^3)$, we denote by $f_{\ell m}$ the averaged function
\begin{equation}
\label{eq:def_f_ellm}
f_{\ell m}(r) = \itg{S(0,1)}{}{f(\mathbf{r}) Y_{\ell m}(\hat{\mathbf{r}})}{\hat{\mathbf{r}}}.
\end{equation}
We recall the following identity that will be extensively used in the rest of the paper:
\begin{equation}
\label{eq:Fourier-spherical-harm}
e^{-i\mathbf{K}\cdot \mathbf{r}} = 4 \pi \sumlim{|m| \leq \ell}{} i^\ell Y_{\ell m}(-\hat{\mathbf{K}}) Y_{\ell m}(\hat{\mathbf{r}}) j_\ell(Kr),
\end{equation}
where $j_\ell$ is the spherical Bessel function of the first kind. 

To prove Proposition~\ref{prop:cusp}, we start with a lemma that caracterizes the main difficulties in the plane-wave convergence of the molecular wave function. 

\begin{lem}
\label{lem:spherical_bessel}
Let $\ell$ and $j$ be integers such that $\ell \leq j$. Let $K>0$. Then if $j+\ell$ is even, asymptotically as $K \to \infty$, we have for any positive integer $n \geq j+3$, 
\begin{equation}
\itg{0}{\frac{r_c}{2}}{\omega(r) r^{j+2} j_\ell(Kr)}{r} = \frac{\beta_{j,\ell}}{K^{j+3}}  + o \left( \frac{1}{K^n} \right) ,
\end{equation}
where 
$$
\beta_{j,\ell} = (-1)^{(j+\ell)/2} (j-\ell+1)! \prod_{k=0}^{\ell} (j-\ell+1+2k),
$$
and if $j+\ell$ is odd, for any positive integer $n \geq j+3$,
\begin{equation}
\itg{0}{\frac{r_c}{2}}{\omega(r) r^{j+2} j_\ell(Kr)}{r} = o \left( \frac{1}{K^n} \right).
\end{equation}
\end{lem}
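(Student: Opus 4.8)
\textbf{Proof plan for Lemma~\ref{lem:spherical_bessel}.} The plan is to exploit the fact that the cut-off $\omega$ is supported away from $0$ only in the sense that $\omega = 1$ on a neighbourhood of the origin, so the integrand $\omega(r) r^{j+2} j_\ell(Kr)$ agrees with $r^{j+2} j_\ell(Kr)$ for $r \le r_c/4$ and is smooth (indeed $C^\infty$ with compact support in $r$) everywhere. The first step is to reduce the claim to an exact identity plus a negligible tail: I would split $\int_0^{r_c/2} = \int_0^\infty - \int_{r_c/4}^\infty$ after replacing $\omega$ by $1$ on $[0,r_c/4]$, and on the region $r \ge r_c/4$ use that $j_\ell(Kr)$ together with all its derivatives decays like $1/(Kr)$, so repeated integration by parts against the smooth compactly supported weight $\omega(r) r^{j+2}$ (whose derivatives are bounded, with constants depending on $r_c$ but not on $K$) gives a contribution that is $o(K^{-n})$ for every $n$. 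This is exactly the mechanism already used implicitly in Remark~\ref{rem:remainder_reg} and is routine.

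The second, and main, step is to evaluate the ``full'' integral $\int_0^\infty r^{j+2} j_\ell(Kr)\,\mathrm{d}r$. After the substitution $u = Kr$ this becomes $K^{-(j+3)} \int_0^\infty u^{j+2} j_\ell(u)\,\mathrm{d}r$, so everything comes down to the classical Mellin-type integral $\int_0^\infty u^{s} j_\ell(u)\,\mathrm{d}u$ with $s = j+2$. I would use the standard formula (e.g.\ Gradshteyn--Ryzhik, or derived from the Hankel-transform identity $\int_0^\infty u^{\mu} J_\nu(u)\,\mathrm{d}u = 2^\mu \Gamma\!\big(\tfrac{\nu+\mu+1}{2}\big)/\Gamma\!\big(\tfrac{\nu-\mu+1}{2}\big)$ applied to $j_\ell(u) = \sqrt{\pi/(2u)}\,J_{\ell+1/2}(u)$) to get a ratio of Gamma functions in $j$ and $\ell$. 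The parity dichotomy then falls out automatically: when $j+\ell$ is odd the denominator Gamma function $\Gamma$ is evaluated at a non-positive integer or half-integer configuration that makes the ratio vanish (this is the "$o(K^{-n})$, in fact exactly $0$ up to the tail" case), and when $j+\ell$ is even the Gamma ratio collapses to the finite product $\beta_{j,\ell} = (-1)^{(j+\ell)/2}(j-\ell+1)!\prod_{k=0}^{\ell}(j-\ell+1+2k)$. I would verify the constant by rewriting the Gamma ratio $\Gamma\big(\tfrac{j+\ell+3}{2}\big)\big/\Gamma\big(\tfrac{j-\ell+1}{2}\big)$ (times the $2^{j+2}\sqrt{\pi/2}$ and $\sqrt{\pi/2}$ prefactors from the $j_\ell$--$J_{\ell+1/2}$ conversion) and checking it telescopes to the stated product, e.g.\ by induction on $\ell$ or by pairing half-integer Gamma values.

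The third step is just bookkeeping: combine the exact value $\beta_{j,\ell}/K^{j+3}$ from step two with the $o(K^{-n})$ tail from step one, note that the hypothesis $n \ge j+3$ guarantees the main term dominates the error, and in the odd case observe that the ``exact'' integral is literally $0$ so only the tail survives, giving $o(K^{-n})$. I would also remark that the constant $\beta_{j,\ell}$ is independent of $r_c$ (it comes from the rescaled integral over all of $[0,\infty)$), which is what is needed downstream in Proposition~\ref{prop:cusp}.

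The part I expect to require genuine care is step two: pinning down the precise closed form of $\int_0^\infty u^{j+2} j_\ell(u)\,\mathrm{d}u$ and matching it exactly to $\beta_{j,\ell}$, since the Hankel-transform formula needs $-\nu-1 < \mu < 1/2$ to converge absolutely and here $\mu = j+2+1/2 = j+5/2$ violates that, so the integral must be interpreted in the Abel-summable / distributional sense (equivalently, obtained by analytic continuation in $\mu$, or by the integration-by-parts regularization that the cut-off $\omega$ already provides). Making this rigorous — arguing that the smooth-cutoff integral genuinely converges to the analytically-continued value as $K\to\infty$, rather than merely formally — is the technical heart of the lemma; the parity statement and the explicit product are then bookkeeping on Gamma functions.
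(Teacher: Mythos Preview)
Your approach is genuinely different from the paper's, and while the core idea (extract the constant from a Hankel-transform identity, treat the rest as a negligible tail) is reasonable, Step~1 as written has a real gap.

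The splitting $\int_0^{r_c/2} = \int_0^\infty - \int_{r_c/4}^\infty$ you propose involves two \emph{divergent} integrals: $\int_0^\infty r^{j+2} j_\ell(Kr)\,\mathrm{d}r$ does not converge (as you acknowledge), but neither does the tail $\int_{r_c/4}^\infty r^{j+2} j_\ell(Kr)\,\mathrm{d}r$, since the integrand behaves like $r^{j+1}\sin(Kr-\ell\pi/2)/K$ at infinity. Your justification---integration by parts against ``the smooth compactly supported weight $\omega(r)r^{j+2}$''---does not apply to that tail: the weight appearing there is $(1-\omega(r))r^{j+2}$ extended past $r_c/2$, which is smooth but \emph{not} compactly supported. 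You correctly flag that connecting the analytically continued Hankel value to the actual cutoff integral is ``the technical heart'', but the plan offers no mechanism for doing so; Abel summability or analytic continuation in $\mu$ identifies the constant $\beta_{j,\ell}$ but does not by itself prove that the difference is $o(K^{-n})$ for every $n$. (A small secondary point: your parity argument on the Gamma ratio is inverted---the denominator $\Gamma\big((\ell-j)/2\big)$ has a pole, hence the ratio vanishes, precisely when $\ell-j$ is an even nonpositive integer, not when $j+\ell$ is odd.)

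The paper sidesteps all of this by never introducing the divergent integral. It argues by induction on $\ell$. For $\ell=0$, writing $j_0(x)=\sin(x)/x$ reduces the problem to $\tfrac{1}{K}\,\mathrm{Im}\int_0^{r_c/2}\omega(r)r^{j+1}e^{iKr}\,\mathrm{d}r$; repeated integration by parts then uses that every boundary term at $r_c/2$ vanishes (all derivatives of $\omega$ are zero there) and that each term containing $\omega'(r)r^{k}$ lies in $C_c^\infty(0,r_c/2)$ and hence contributes $o(K^{-n})$. The single surviving contribution is the boundary term at $r=0$, giving $(j+1)!/K^{j+3}$ up to the parity sign. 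For the inductive step the paper uses the Bessel recurrence $j_{\ell+1}(x)=-j_\ell'(x)+\ell j_\ell(x)/x$ together with one integration by parts to obtain $a_{j+2,\ell+1}=\tfrac{j+\ell+2}{K}\,a_{j+1,\ell}+o(K^{-n})$, and iterating this yields the product form of $\beta_{j,\ell}$ directly. This is entirely elementary---no Gamma functions, no distributional or Abel regularization---and is shorter than what would be needed to make your Hankel-transform route rigorous, since controlling your tail ultimately forces the same integration-by-parts bookkeeping anyway.
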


\begin{proof}
We prove the lemma by induction on $\ell$. Let $a_{j+2, \ell}$ be defined by
\begin{equation}
a_{j+2, \ell} = \itg{0}{\frac{r_c}{2}}{\omega(r) r^{j+2} j_\ell(Kr)}{r}.
\end{equation}

\paragraph{Initialization}
For $\ell=0$, we have 
$$
j_0(x) = \frac{\sin (x)}{x},
$$
hence for any $j \in \N$,
\begin{equation}
\label{eq:a_j_imaginary_part}
a_{j+2,0} = \frac{1}{K} \mathrm{Im} \left(\itg{0}{\frac{r_c}{2}}{\omega(r) r^{j+1}e^{iKr}}{r}\right).
\end{equation}
By integration by parts, we have
\begin{align*}
\itg{0}{\frac{r_c}{2}}{\omega(r) r^{j+1}e^{iKr}}{r} & = \frac{1}{iK} \underbrace{\left[- \omega(r) r^{j+1} e^{iKr}\right]_0^{\frac{r_c}{2}}}_{=0} - \frac{1}{iK} \itg{0}{\frac{r_c}{2}}{(\omega(r) r^{j+1})' e^{iKr}}{r} \\
& = - \frac{1}{iK} \itg{0}{\frac{r_c}{2}}{\omega'(r)r^{j+1} e^{iKr}}{r} - \frac{j+1}{iK} \itg{0}{\frac{r_c}{2}}{\omega(r)r^{j} e^{iKr}}{r}.
\end{align*}
The function $r \mapsto r^{j+1} \omega'(r)$ belongs to $C^\infty_c(0,\frac{r_c}{2})$ hence we have for any $n > j+2$
\begin{equation}
\itg{0}{\frac{r_c}{2}}{\omega(r) r^{j+1}e^{iKr}}{r}   = -\frac{(j+1)}{iK} \itg{0}{\frac{r_c}{2}}{\omega(r) r^{j} e^{iKr}}{r} + o \left( \frac{1}{K^n} \right) 
\end{equation}
By integrating by parts $j$ times and noticing that the functions $r \mapsto r^k \omega'(r)$, $k \in \N$ are in $C^\infty_c(0,\frac{r_c}{2})$, we obtain
\begin{equation}
\itg{0}{\frac{r_c}{2}}{\omega(r) r^{j+1}e^{iKr}}{r} = (-1)^{j+1} \frac{(j+1)!}{(iK)^{j+2}} + o \left( \frac{1}{K^n} \right) .
\end{equation}
Hence using \eqref{eq:a_j_imaginary_part}, if $j$ is even, $a_{j+2,0} = o \left( \frac{1}{K^n} \right)$ for all positive integer $n$, otherwise $a_{j+2,0} = (-1)^{j/2} \frac{(j+1)!}{K^{j+3}}$.

\paragraph{Iteration}
Using the recurrence relation
\begin{equation}
j_{\ell+1}(x) = - j_\ell'(x) + \frac{\ell j_\ell(x)}{x},
\end{equation}
we have the following recurrence relation on $(a_{j,\ell})$:
\begin{align}
a_{j+2,\ell+1} & = \itg{0}{\frac{r_c}{2}}{\omega(r) r^{j+2} j_{\ell+1}(Kr)}{r} \\
& = \itg{0}{\frac{r_c}{2}}{\omega(r) r^{j+2}\left( - j_\ell'(Kr) + \frac{\ell j_\ell(Kr)}{Kr} \right)}{r} \\
& = \frac{\ell}{K} \itg{0}{\frac{r_c}{2}}{\omega(r) r^{j+1} j_{\ell}(Kr)}{r} - \itg{0}{\frac{r_c}{2}}{\omega(r) r^{j+2} j_{\ell}'(Kr)}{r}. \label{eq:bessel_rec}
\end{align} 
By integration by parts, using that $r \mapsto r^{j+2}\omega'(r) \in C^\infty_c(0,\frac{r_c}{2})$, we have
\begin{align*}
\itg{0}{\frac{r_c}{2}}{\omega(r) r^{j+2} j_{\ell}'(Kr)}{r} & = -\frac{j+2}{K}  \itg{0}{\frac{r_c}{2}}{\omega(r)r^{j+1}j_\ell(Kr)}{r} + o \left( \frac{1}{K^n} \right).
\end{align*}
Thus, we have by iteration, 
\begin{align*}
a_{j+2,\ell+1} & = \frac{j+\ell+2}{K} \itg{0}{\frac{r_c}{2}}{\omega(r)r^{j+1}j_\ell(Kr)}{r} + o \left( \frac{1}{K^n} \right)\\
& = \frac{j+\ell+2}{K} a_{j+1,\ell} + o \left( \frac{1}{K^n} \right)\\
& =  \frac{\prod_{k=0}^{\ell+1} (j-\ell+2k)}{K^{\ell+1}} a_{j-\ell+1,0} + o \left( \frac{1}{K^n} \right)  .
\end{align*}
Hence if $j+\ell+1$ is odd, $a_{j+2,\ell+1} = o\left( \frac{1}{K^n} \right)$ for all positive integer $n$, else if $j+\ell+1$ is even, for any $n \geq j+3$,
$$
a_{j+2,\ell+1} = (-1)^{(j-\ell+1)/2} (j-\ell)! \frac{\prod_{k=0}^{\ell+1} (j-\ell+2k)}{K^{j+3}} + o \left( \frac{1}{K^n} \right).
$$
\end{proof}

\begin{lem}
\label{lem:cusp}
Let $N \in \N^*$. Let $c_j$ be the functions of the singular expansion \eqref{eq:weighted_sobolev} at 0 of $\psi$. Let $\psi_{j\ell m}$ be the coefficients such that 
$$
c_j(\hat{\mathbf{r}}) = \sum_{\ell=0}^j \sum_{|m|\leq \ell} \psi_{j \ell m} Y_{\ell m}(\hat{\mathbf{r}}).
$$
Then, we have asymptotically as $K$ goes to $\infty$ and for any positive integer $n$,
\begin{equation}
\itg{[-\frac{1}{2},\frac{1}{2}]^3}{}{\omega(r) \sumlim{j=0}{N} c_j(\hat{\mathbf{r}}) r^j e^{-i \mathbf{K}\cdot \mathbf{r}}}{\mathbf{r}} = 4 \pi \sumlim{j=0}{N} \sum_{\substack{\ell=0 \\ j+\ell \ \textrm{odd}} }^j \sum_{|m|\leq \ell} i^\ell Y_{\ell m}(- \hat{\mathbf{K}}) \frac{\beta_{j,\ell} \psi_{j \ell m}}{K^{j+3}} + o \left( \frac{1}{K^n} \right).
\end{equation}

\end{lem}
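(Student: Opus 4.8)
The plan is to reduce the three-dimensional integral to finitely many one-dimensional radial integrals of the type already controlled by Lemma~\ref{lem:spherical_bessel}, by inserting the Rayleigh expansion~\eqref{eq:Fourier-spherical-harm} of the plane wave and exploiting the orthonormality of the real spherical harmonics on $S(0,1)$.

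First I would pass to spherical coordinates $\mathbf{r}=r\hat{\mathbf{r}}$. Since $0<r_c<r_\mathrm{min}$, the cut-off $\omega$ is supported in $B(0,r_c/2)$, which lies in the interior of $\Gamma$, so the integral over $[-\frac{1}{2},\frac{1}{2}]^3$ coincides with the integral over $\R^3$ and no contribution from $\partial\Gamma$ arises. Writing $c_j(\hat{\mathbf{r}})=\sum_{\ell=0}^{j}\sum_{|m|\le\ell}\psi_{j\ell m}Y_{\ell m}(\hat{\mathbf{r}})$ and substituting~\eqref{eq:Fourier-spherical-harm}, the angular part of the integrand is, for each fixed $r$, the product of a finite sum of spherical harmonics with the $L^2(S(0,1))$-convergent series $4\pi\sum i^{\ell}Y_{\ell m}(-\hat{\mathbf{K}})Y_{\ell m}(\hat{\mathbf{r}})j_{\ell}(Kr)$. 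Integrating over $\hat{\mathbf{r}}\in S(0,1)$ and using $\int_{S(0,1)}Y_{\ell m}Y_{\ell' m'}\,\mathrm{d}\hat{\mathbf{r}}=\delta_{\ell\ell'}\delta_{mm'}$, the infinite $\ell$-sum collapses onto the finitely many indices $\ell\le j\le N$ of the singular expansion, leaving
\begin{equation*}
\itg{[-\frac{1}{2},\frac{1}{2}]^3}{}{\omega(r)\sumlim{j=0}{N}c_j(\hat{\mathbf{r}})r^j e^{-i\mathbf{K}\cdot\mathbf{r}}}{\mathbf{r}}=4\pi\sumlim{j=0}{N}\sum_{\ell=0}^{j}\sum_{|m|\le\ell} i^{\ell}Y_{\ell m}(-\hat{\mathbf{K}})\,\psi_{j\ell m}\itg{0}{\frac{r_c}{2}}{\omega(r)r^{j+2}j_{\ell}(Kr)}{r}.
\end{equation*}

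Then I would apply Lemma~\ref{lem:spherical_bessel} to each of the finitely many radial integrals: each equals $\beta_{j,\ell}K^{-(j+3)}+o(K^{-n})$ when the parity of $j+\ell$ is the one singled out in that lemma, and is $o(K^{-n})$ for every $n$ otherwise. Because the triple sum is finite and $j\le N$, for a fixed target exponent $n$ one applies the lemma with a large enough auxiliary exponent in each term and adds the finitely many $o(K^{-n})$ remainders, which stays $o(K^{-n})$; since $|Y_{\ell m}(-\hat{\mathbf{K}})|$ is bounded uniformly in $\hat{\mathbf{K}}$, the error is uniform in the direction of $\mathbf{K}$. Collecting the surviving leading terms gives exactly the claimed identity. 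There is essentially no obstacle beyond this bookkeeping: the genuine analytic content — that the oscillatory radial integral decays like a single power $K^{-(j+3)}$ with explicit constant, or faster — is entirely contained in Lemma~\ref{lem:spherical_bessel}; the only point requiring a little care is the legitimacy of collapsing the Rayleigh series against the finite harmonic expansion of $c_j$, which is immediate from $L^2(S(0,1))$-orthogonality.
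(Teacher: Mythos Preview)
Your proposal is correct and follows essentially the same route as the paper: pass to spherical coordinates using that $\mathrm{supp}(\omega)\subset B(0,r_c/2)\subset\Gamma$, insert the scattering expansion~\eqref{eq:Fourier-spherical-harm}, collapse the angular integral by orthonormality of the $Y_{\ell m}$, and then invoke Lemma~\ref{lem:spherical_bessel} on each of the finitely many remaining radial integrals. If anything, your write-up is slightly more explicit than the paper's about the legitimacy of the collapse and the uniformity of the $o(K^{-n})$ remainder in $\hat{\mathbf{K}}$.
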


\begin{proof}
We have since $\mathrm{supp}(\omega) \subset (0,\frac{r_c}{2})$
\begin{align}
\itg{[-\frac{1}{2},\frac{1}{2}]^3}{}{\omega(r) \sumlim{j=0}{N} c_j(\hat{\mathbf{r}}) r^j e^{-i \mathbf{K}\cdot \mathbf{r}}}{\mathbf{r}} & = \itg{0}{\frac{r_c}{2}}{r^2 \omega(r) \sumlim{j = 0}{N} r^j \sum_{\ell=0}^j \sum_{|m|\leq \ell} \psi_{j \ell m} \itg{S(0,1)}{}{Y_{\ell m}(\hat{\mathbf{r}})  e^{-i \mathbf{K}\cdot \mathbf{r}}}{\hat{\mathbf{r}}}}{r}.
\end{align}
Using the scattering expansion \eqref{eq:Fourier-spherical-harm} and applying Lemma \ref{lem:spherical_bessel}, we get
\begin{align}
\itg{[-\frac{1}{2},\frac{1}{2}]^3}{}{\omega(r) \sumlim{j=0}{N} c_j(\hat{\mathbf{r}}) r^j e^{-i \mathbf{K}\cdot \mathbf{r}}}{\mathbf{r}} & = 4 \pi \sumlim{j=0}{N}\sum_{\ell=0}^j \sum_{|m|\leq \ell}  i^\ell Y_{\ell m}^*(-\hat{\mathbf{K}}) \itg{0}{\frac{r_c}{2}}{\omega(r) r^{j+2} \psi_{j \ell m} j_\ell(Kr)}{r}  \\
& = 4 \pi \sumlim{j=0}{N} \sum_{\substack{\ell=0 \\ j+\ell \ \textrm{odd}} }^j \sum_{|m|\leq \ell} i^\ell Y_{\ell m}(- \hat{\mathbf{K}}) \frac{\beta_{j,\ell} \psi_{j \ell m}}{K^{j+3}} + o \left( \frac{1}{K^n} \right).
\end{align}
\end{proof}


According to Lemma \ref{lem:cusp}, the slowest decaying term is the term associated to $j=1$ and $\ell =0$. Proposition~\ref{prop:cusp} is then a direct consequence of Lemmas~\ref{lem:cusp} and \ref{lem:approx} stated below.

\begin{lem}
\label{lem:approx}
Let $\psi$ be an eigenfunction of $H\psi = E\psi$, with H defined in \eqref{eq:H_per}. Let $n$ be the number of PAW functions associated to the angular momentum $\ell=0,m=0$. 

Then there exist coefficients $(\alpha_k)_{1\leq k \leq n}$ and a positive constant $C$ independent of $r_c$ such that 
\begin{equation*}
\| \psi_{00} - \alpha^T \cR \|_{L^2(0,{r_c})} \leq C {r_c}^{1/2-\varepsilon + \min(2n,5)},
\end{equation*}
where $\psi_{00}$ denotes the averaged function $\psi$ according to \eqref{eq:def_f_ellm}. Moreover for these coefficients, we have 
$$
\psi_{00}(0) - \alpha^T \cR(0) = 0.
$$
\end{lem}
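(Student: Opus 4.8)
The plan is to construct the coefficients $(\alpha_k)_{1 \leq k \leq n}$ by matching the Taylor expansion of $\psi_{00}$ at $r=0$ against the span of the radial functions $\cR = (R_1,\dots,R_n)^T$, exploiting Assumption~\ref{assump:cR_free_family} to guarantee that enough derivatives can be matched, and then use the singular expansion of $\psi$ from Theorem~\ref{theo:kato_cusp} to control the residual. First I would recall that $\psi_{00}(r) = \itg{S(0,1)}{}{\psi(\mathbf{r}) Y_{00}(\hat{\mathbf{r}})}{\hat{\mathbf{r}}}$ is, by Theorem~\ref{theo:psi_well_behaved} and Remark~\ref{rem:remainder_reg}, a function whose singular expansion at $0$ has the form $\psi_{00}(r) \sim \sumlim{j \geq 0}{} a_j r^j$ with the coefficients $a_j$ governed by the recurrence \eqref{eq:psi_j_recurrence}; in particular the remainder after truncating at order $N$ lies in a weighted Sobolev space that embeds into $H^{5/2+N-\varepsilon}$, so that the truncation error in $L^2(0,r_c)$ is $O(r_c^{1/2 - \varepsilon + N + 1})$ (gaining $r^{1/2}$ from the radial measure $r^2\,dr$ near $0$ relative to the pointwise $r^{N+1}$ decay). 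I would similarly write each $R_k$ via its Taylor expansion at $0$: $R_k(r) = \sumlim{i \geq 0}{} \rho_{k,i} r^i$, noting these are smooth (or at least admit full Taylor expansions) since the $R_{k}$ solve the radial Schrödinger equation \eqref{eq:radial_schrodinger} with a smooth $W_\mathrm{at}$.

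The heart of the argument is the matching step. The idea is to choose $\alpha = (\alpha_1,\dots,\alpha_n)^T$ so that the Taylor coefficients of $\alpha^T\cR$ agree with those of $\psi_{00}$ up to as high an order as possible. Collecting the lowest $n$ Taylor coefficients gives a linear system whose matrix is $\big(\rho_{k,i}\big)_{1 \leq k \leq n,\ 0 \leq i \leq n-1}$; Assumption~\ref{assump:cR_free_family} (with $r_c$ replaced by $0$ in the limiting sense, i.e.\ the Wronskian-type nondegeneracy of $\cR, \cR', \dots, \cR^{(n-1)}$) ensures this matrix is invertible, so $\alpha$ is uniquely determined and $\alpha^T\cR$ matches $\psi_{00}$ to order $r^{n-1}$, i.e.\ the difference is $O(r^n)$ pointwise near $0$. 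Combined with the $L^2(0,r_c)$ estimate this yields $\|\psi_{00} - \alpha^T\cR\|_{L^2(0,r_c)} \lesssim r_c^{1/2 + n}$. To reach the stated exponent $\min(2n,5)$ rather than $n$, I would observe that both $\psi_{00}$ and each $R_k$ satisfy (up to the smooth lower-order terms $W_\mathrm{at} R_k$, $V^I$, and the energy shift) the \emph{same} second-order radial ODE structure with the Coulomb term $-Z/r$: the recurrence \eqref{eq:psi_j_recurrence} expresses $a_{j+1}$ in terms of $a_j$ and lower coefficients, and an identical recurrence holds for the $\rho_{k,i}$. Hence matching the first $n$ "free" data (the values and the first $n-1$ derivatives that are not already forced by the ODE) actually propagates agreement to order $2n$: each matched derivative, fed through the shared recurrence, forces the next one, so the mismatch first appears at order $r^{2n}$ — with the cap at $5$ coming from the fact that the singular expansion / weighted-Sobolev regularity of $\psi$ itself only guarantees control of the remainder up to the $H^{7/2-\varepsilon}$ level used elsewhere in the proof (i.e.\ $N=1$ suffices to see $\min(\cdot,5)$). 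This gives $\|\psi_{00} - \alpha^T\cR\|_{L^2(0,r_c)} \lesssim r_c^{1/2 - \varepsilon + \min(2n,5)}$.

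Finally, the claim $\psi_{00}(0) - \alpha^T\cR(0) = 0$ is immediate from the matching construction, since $r=0$ corresponds to the $i=0$ (lowest-order) equation in the linear system, which is satisfied exactly by design. The main obstacle I anticipate is the doubling of the convergence order from $n$ to $\min(2n,5)$: one must argue carefully that the shared recurrence structure of \eqref{eq:psi_j_recurrence} — with the \emph{same} coefficient $Z$ and the compatible lower-order potential and energy terms — really does force the intermediate Taylor coefficients to coincide once the "initial data" are matched, and that the $n$ degrees of freedom in $\alpha$ are exactly the right ones (i.e.\ that $(R_1,\dots,R_n)$ span, modulo $O(r^{2n})$, the space of solutions of the relevant truncated recurrence). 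This is precisely where Assumptions~\ref{assump:cR_free_family} and \ref{assump:radial_PAW_function_cusp} enter: the former gives the solvability of the matching system, and the latter ($R_{k}(0) \neq 0$) ensures the leading term is genuinely present so that the cusp of $\psi_{00}$ (encoded in $a_1$ via the Kato condition $a_1 = -Z a_0$) is actually captured rather than spuriously cancelled.
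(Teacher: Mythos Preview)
Your high-level plan---match low-order coefficients of the singular expansion of $\psi_{00}$ against those of $\alpha^T\cR$, then use the recurrence \eqref{eq:psi_j_recurrence} to propagate the agreement---is the same as the paper's. The equality $\psi_{00}(0)=\alpha^T\cR(0)$ indeed drops out of the $j=0$ equation. However, there is a genuine gap in your identification of why the exponent is $\min(2n,5)$ rather than $2n$.

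You attribute the cap at $5$ to limited weighted-Sobolev regularity of $\psi$ (``only guarantees control of the remainder up to the $H^{7/2-\varepsilon}$ level''). This is incorrect: Theorem~\ref{theo:psi_well_behaved} gives $\psi\in\mathscr{K}^{\infty,\gamma}$, so the singular expansion is available to \emph{all} orders and there is no regularity obstruction. The cap is purely algebraic. The recurrence for $\psi_{00}$ involves the pair $(E,\,v_{2k})$ coming from the molecular potential $V^I$, whereas the recurrence for each $R_k$ involves $(\epsilon_k,\,w_{2k})$ coming from the \emph{atomic} potential $W_\mathrm{at}$---these are \emph{not} the same recurrence. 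The paper makes this explicit via Lemma~\ref{lem:zeta_k}: writing $\zeta_{2k},\zeta_{2k+1}$ in the basis $(\cE^j\zeta_0)_j$ and $\uppsi_{2k},\uppsi_{2k+1}$ analogously with $E$ and $v_{2k}$, the matching system \eqref{eq:alpha_zeta=psi} is solvable iff a compatibility condition $M_2 M_1^{-1} N_1 = N_2$ holds, where the $M$-matrices depend on $w_{2k}$ and the $N$-matrices on $v_{2k}$. For $n\leq 2$ this condition holds because the relevant entries of $M_2 M_1^{-1}$ involve only $Z$ (the potentials have not yet entered), so the doubling to $2n$ goes through. For $n\geq 3$ the $(3,1)$ entry of $M_2 M_1^{-1}$ contains a term $\tfrac{391}{6075}Z^3 w_0$ that generically differs from its counterpart in $N_2 N_1^{-1}$, and the matching stalls at order $5$.

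So the ``doubling'' you invoke is not automatic from a shared ODE; it is a delicate cancellation that one must check order by order, and it fails at order $5$. Your proposal as written would, if carried out, either prove only the weaker bound $r_c^{1/2+n}$ or would incorrectly claim $r_c^{1/2+2n}$ for all $n$. To fix it you need Lemma~\ref{lem:zeta_k} (the explicit structure of the $\zeta_k$ in terms of $\cE^j\zeta_0$) and the case-by-case verification of the compatibility condition for $n=1,2$ versus $n\geq 3$.
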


We first prove that Proposition~\ref{prop:cusp} follows from Lemmas~\ref{lem:cusp} and \ref{lem:approx}.

\begin{proof}[Proof of Proposition~\ref{prop:cusp}]
By definition of the PAW atomic functions $\phi_k = R_k Y_{00}$, these functions have a cusp at 0 and satisfy $R_k'(0) = -ZR_k(0)$. 
By Theorem~\ref{theo:psi_well_behaved}, we have $\psi_{100} = -Z\psi(0)$.
Using that the PAW pseudo wave functions are smooth in a neighbourdhood of 0 and Lemma~\ref{lem:cusp} for $N=1$, we obtain
\[
\left| \itg{\Gamma}{}{\omega(r) \left( \sumlim{j=0}{1}c^I_j(\hat{\mathbf{r}})r^j - \sumlim{k=1}{n} \psh{\widetilde{p}_k^I}{\widetilde{\psi}}(\phi^I_k(\mathbf{r}) - \widetilde{\phi}^I_k(\mathbf{r})) \right) e^{-i \mathbf{K}\cdot \mathbf{r}}}{\mathbf{r}} \right| \leq \frac{C}{K^4} |\psi(0) - \psh{\widetilde{p}}{\widetilde{\psi}}^T \mathcal{R}(0)|,
\]
where $C>0$ is a constant independent of $r_c$ and $K$.
We will now show that $|\psi(0) - \psh{\widetilde{p}}{\widetilde{\psi}}^T \mathcal{R}(0)| \leq C {r_c}^{\min(2n,5)-\varepsilon}$, for some constant $C>0$ independent of $r_c$. 
By duality of $\widetilde{\phi}_j$ and $\widetilde{p}_k$,  for any $\alpha \in \R^n$ we have
\begin{equation}
\psi(0) - \psh{\widetilde{p}}{\widetilde{\psi}}^T \mathcal{R}(0) = \psi_{00}(0) - \alpha^T \cR(0) - \psh{\widetilde{p}}{\widetilde{\psi} - \alpha^T \widetilde{\Phi} }^T \cR(0) \label{eq:proof_intermediaire}.
\end{equation}
First we rewrite $\psh{\widetilde{p}}{\widetilde{\psi} - \alpha^T \widetilde{\Phi} }$ in a more convenient way. In a neighbordhood of 0, we have 
$$
\psi - \psh{\widetilde{p}}{\widetilde{\psi}}^T \Phi = \widetilde{\psi} - \psh{\widetilde{p}}{\widetilde{\psi}}^T \widetilde{\Phi}.
$$
By multiplying by $\widetilde{p}_k$, $k=1,..,n$ and integrating over the ball $B(0,{r_c})$, we obtain 
$$
\psh{\widetilde{p}_k}{\psi}  - \psh{\widetilde{p}}{\widetilde{\psi}}^T \psh{\widetilde{p}_k}{\Phi} = 0,
$$
so
$$
\psh{\widetilde{p}}{\widetilde{\psi}} = A^{-1}\psh{\widetilde{p}}{\psi},
$$
where $A = (\psh{\widetilde{p}_j}{\phi_k})_{1 \leq j,k \leq n} = (\psh{p_j}{R_k}_{[0,r_c]})_{1 \leq j,k \leq n}$ which is invertible by Assumption \ref{assump:cR_free_family}. By definition of the PAW functions, the 3D-integrals can be reduced to integrals on an interval 
$$
\psh{\widetilde{p}}{\widetilde{\psi}}  = A^{-1} \psh{p}{\psi_{00}}_{[0,{r_c}]} ,
$$
where 
\[
\psh{f}{g}_{[0,r_c]} = \itg{0}{r_c}{f(r)g(r)r^2}{r}.
\]
By duality of the PAW functions, $\psh{\widetilde{p}_j}{\widetilde{\phi}_k} = \delta_{jk}$, we have $\psh{\widetilde{p}}{\alpha^T \widetilde{\Phi}} = A^{-1} \psh{p}{\alpha^T \cR}_{[0,r_c]}$. Hence,
$$
\psh{\widetilde{p}}{\widetilde{\psi}-\alpha^T \widetilde{\Phi}} =  A^{-1} \psh{p}{\psi_{00}-\alpha^T \cR}_{[0,{r_c}]} .
$$
By Lemma \ref{lem:tilde_p_stuff}, there exists a constant $C$ independent of ${r_c}$ such that for any ${r_c} >0$,
\begin{equation}
\left| \psh{p}{\psi_{00} - \alpha^T \cR}_{[0,r_c]}^T A^{-T} \cR'(0) \right| \leq \frac{C}{{r_c}^{3/2}} \| \psi_{00} - \alpha^T \cR \|_{L^2(B(0,{r_c}))} \leq  \frac{C}{{r_c}^{1/2}} \| \psi_{00} - \alpha^T \cR \|_{L^2(0,{r_c})}.
\end{equation}
From Lemma \ref{lem:approx}, we know that there exists $\alpha \in \R^n$ such that
\begin{equation*}
\| \psi_{00} - \alpha^T \cR \|_{L^2(0,{r_c})} \leq C {r_c}^{\frac{1}{2}-\varepsilon+\min(2n,5)} \quad \text{ and } \quad \alpha^T \cR(0) = \psi(0).
\end{equation*}
Inserting this equation into \eqref{eq:proof_intermediaire} finishes the proof. 

\end{proof}

To show Proposition~\ref{prop:cusp}, it suffices to prove Lemma~\ref{lem:approx}. In order to show this lemma, we start with a few intermediary results and introduce some notation. Let $\uppsi_k \in \R$ and $\zeta_k \in \R^n$ be, respectively, the coefficients of the singular expansion of $\psi_{00}$ and $\cR$:
\begin{align}
\psi_{00}(r) & = \sumlim{j=0}{N} \uppsi_{j} r^j + \eta_{N+1}(r), \quad \eta_{N+1} \in \cK^{\infty, \frac{5}{2}+N -\varepsilon}(\Gamma) \\
\cR(r) &= \sumlim{j=0}{N} \zeta_j r^j + \xi_{N+1}(r), \quad \xi_{N+1} \in \cK^{\infty, \frac{5}{2}+N -\varepsilon}(\Gamma). \label{eq:cR_zeta_expansion}
\end{align}
The potential $V^I$ defined in Equation~\eqref{eq:external_V} is smooth, hence we can find $(v_{2k})_{0 \leq k \leq N} \in \R^N$ such that
\begin{align}
V_{00}^I(r) = \sumlim{k=0}{N} v_{2k} r^{2k} + \cO(r^{2N+2}).
\end{align}
The atomic potential $W_\mathrm{at}$ in Equation~\ref{eq:atomic_hamiltonian} is also smooth so there is $(w_{2k})_{0 \leq k \leq N} \in \R^N$ such that
\begin{equation}
\label{eq:coefficient_w_atomic_potential}
    (W_\mathrm{at})_{00}(r) = \sumlim{k=0}{N} w_{2k} r^{2k} + \cO(r^{2N+2}).
\end{equation}
Finally, we denote $\cE$ the diagonal matrix with entries $(\epsilon_1,\dots,\epsilon_n)$.

\begin{lem}
\label{lem:zeta_k}
Let $\cR = (R_1, \dots, R_n)^T$ where $R_k$ is defined in \eqref{eq:radial_phi}. 

There exists $(\mu_j^{(k)})_{0 \leq j \leq k \leq n-1}$ and $(\nu_j^{(k)})_{0 \leq j \leq k \leq n-1}$ such that
\begin{align}
\zeta_{2k} &  = \sumlim{j=0}{k} \mu_j^{(k)} \cE^j \zeta_0 \\
\zeta_{2k+1} &  = \sumlim{j=0}{k} \nu_j^{(k)} \cE^j \zeta_0 ,
\end{align}
with $\mu_k^{(k)} \not=0$ for any $0 \leq k \leq n-1$.

Moreover, the vectors $(\zeta_k)_{1 \leq k \leq 5}$ satisfies
\begin{align*}
\zeta_1 & = - Z\zeta_0 \\
\zeta_2 & = - \frac{1}{3} \cE \zeta_0 + \frac{1}{3} (Z^2 + w_0) \zeta_0 \\
\zeta_3 & = \frac{2}{9} Z \cE \zeta_0 - \left( \frac{Z^3}{18} + \frac{2}{9} Z w_0 \right) \zeta_0 \\
\zeta_4 & = \frac{1}{30} \cE^2 \zeta_0 - \left( \frac{Z^2}{18} + \frac{w_0}{15} \right) \cE \zeta_0 + \left( \frac{Z^4}{180} + \frac{Z^2w_0}{18} + \frac{w_0^2}{30} + \frac{w_2}{10} \right) \zeta_0 \\
\zeta_5 & = - \frac{23}{1350} Z \cE^2 \zeta_0 + \left( \frac{Z^3}{135} + \frac{23}{675} w_0 Z \right) \cE \zeta_0  - \left( \frac{Z^5}{2700} + \frac{Z^3 w_0}{135} + \frac{23}{1350} Z w_0^2 + \frac{11}{150} Z w_2\right) \zeta_0.
\end{align*}
\end{lem}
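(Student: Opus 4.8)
The plan is to obtain a recurrence for the vectors $\zeta_j$ by inserting the singular expansion \eqref{eq:cR_zeta_expansion} into the radial Schr\"odinger equation \eqref{eq:radial_schrodinger}, and then to extract both the explicit values of $\zeta_1,\dots,\zeta_5$ and the structural statement from it. Concretely, I would write \eqref{eq:radial_schrodinger} for $\ell=0$ in vector form, with $\cE = \diago(\epsilon_1,\dots,\epsilon_n)$,
\[
-\tfrac12\,\cR'' - \tfrac1r\,\cR' - \tfrac Zr\,\cR + W_\mathrm{at}\,\cR = \cE\,\cR ,
\]
multiply through by $r$ to remove the Coulomb singularity, substitute $\cR(r) = \sum_{j\ge0}\zeta_j r^j$, write $W_\mathrm{at}(r) = \sum_{k\ge0} w_{2k} r^{2k} + \cO(r^{2N+2})$ as in \eqref{eq:coefficient_w_atomic_potential}, and match the coefficient of each power $r^j$. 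The $r^0$ coefficient gives $\zeta_1 = -Z\zeta_0$, and for every $j \ge 1$ one obtains
\[
\frac{(j+1)(j+2)}{2}\,\zeta_{j+1} = -Z\,\zeta_j - \cE\,\zeta_{j-1} + \sum_{k\ge0} w_{2k}\,\zeta_{j-1-2k} ,
\]
which is also the $\ell=0$ instance of the recurrence of Theorem~\ref{theo:kato_cusp} applied to the atomic eigenfunction $\varphi_k = R_k Y_{00}$. Iterating this identity five times — and noting that $w_{2k}$ enters $\zeta_j$ only when $j \ge 2k+2$, so only $w_0$ and $w_2$ occur for $j \le 5$ — yields the closed forms for $\zeta_1,\dots,\zeta_5$ after a routine computation.

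For the structural claim I would prove by strong induction on $j$ that $\zeta_j = P_j(\cE)\zeta_0$ for a polynomial $P_j$ with $\deg P_j \le \lfloor j/2\rfloor$. The cases $j=0,1$ are immediate, and in the induction step the three groups of terms in the recurrence carry $\cE$-degrees $\le\lfloor j/2\rfloor$ (for $Z\zeta_j$), $\le\lfloor(j-1)/2\rfloor+1$ (for $\cE\zeta_{j-1}$), and $\le\lfloor(j-1)/2\rfloor-k$ (for $w_{2k}\zeta_{j-1-2k}$), all of which are $\le\lfloor(j+1)/2\rfloor$; this gives the existence of the coefficients $\mu^{(k)}_j$ and $\nu^{(k)}_j$. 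To see $\mu^{(k)}_k\ne0$, I would track the top-degree term of $\zeta_{2k}$: among the terms of the recurrence with $j=2k-1$, only $\cE\,\zeta_{2k-2}$ attains $\cE$-degree $k$, so $\mu^{(k)}_k = -\tfrac{2}{2k(2k+1)}\,\mu^{(k-1)}_{k-1}$, whence $\mu^{(k)}_k = (-1)^k\prod_{i=1}^{k}\frac{1}{i(2i+1)}\ne0$ since $\mu^{(0)}_0 = 1$.

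The one step requiring genuine justification — and the main obstacle — is that the finite singular expansion \eqref{eq:cR_zeta_expansion} may indeed be differentiated and substituted term by term with the powers of $r$ matched, i.e. that the remainder $\xi_{N+1}$ contributes nothing at orders $\le N$. This relies on the regularity $\xi_{N+1}\in\cK^{\infty,\frac52+N-\varepsilon}(\Gamma)$; equivalently, $r=0$ is a regular singular point of \eqref{eq:radial_schrodinger} with indicial roots $0$ and $-1$, so the solution selected by $\lim_{r\to0}|R_{n0}(r)|<\infty$ admits an asymptotic expansion in nonnegative integer powers of $r$, which legitimizes the coefficient identification. Everything that follows is elementary algebra.
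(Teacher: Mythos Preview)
Your proposal is correct and follows essentially the same approach as the paper: the paper derives the recurrence $\frac{(N+1)(N+2)}{2}\zeta_{N+1} = -Z\zeta_N + (w*\zeta)_{N-1} - \cE\zeta_{N-1}$ by applying Equation~\eqref{eq:psi_j_recurrence} componentwise to each $R_k$, and then states that the explicit formulas and $\mu_k^{(k)}\ne0$ follow ``by iteration''. Your version is more explicit in two respects --- you supply the regular-singular-point justification for substituting the expansion termwise, and you extract the closed form $\mu_k^{(k)} = (-1)^k\prod_{i=1}^k \tfrac{1}{i(2i+1)}$ rather than merely asserting nonvanishing --- but the route is the same.
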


We give an explicit formula of $\zeta_k$ for $k \leq 5$ because we will show that we cannot systematically improve the bound in Lemma~\ref{lem:approx} for $n \geq 2$.

\begin{proof}
This lemma is proved by iteration using Equation~\eqref{eq:psi_j_recurrence} applied to each $R_k$, $k=1,\dots,n$
\begin{equation*}
\forall N \geq 1, \ \frac{(N+1)(N+2)}{2} \zeta_{N+1} = - Z \zeta_N + (w*\zeta)_{N-1} - \cE \zeta_{N-1}. 
\end{equation*}
Here $(w_{2k})_{k\geq 0}$ and $(\zeta_{2k})_{k \geq 0}$ are defined respectively by \eqref{eq:cR_zeta_expansion} and \eqref{eq:coefficient_w_atomic_potential}. Since $W_\mathrm{at}$ is smooth, we have 
\begin{equation*}
(w*\zeta)_{2k} = \sumlim{j=0}{k} w_{2k-2j} \zeta_{2j}, \quad (w*\zeta)_{2k+1} = \sumlim{j=0}{k} w_{2k-2j} \zeta_{2j+1}.
\end{equation*}
By iteration, one notices that $\mu_k^{(k)} \not= 0$ for all $k \in \N$. 
\end{proof}

\begin{proof}[Proof of Lemma \ref{lem:approx}]
To minimize $\|\psi_{00} - \alpha^T \cR\|_{L^2(0,r_c)}$ with respect to $\alpha$ for $r_c$ small, we need to determine how many successive terms in the singular expansion of $\psi_{00}$ can be canceled with $n$ functions $R_k, k=1,\dots,n$, \emph{i.e.} we need to determine for which $N_\mathrm{max} \leq 2n$ we have
\begin{equation}
\label{eq:alpha_zeta=psi}
\forall \, 0 \leq k \leq N_\mathrm{max}, \ \alpha^T \zeta_k = \uppsi_k.
\end{equation}
Expressing $\zeta_k$ (resp. $\uppsi_k$) as a linear combination of $(\zeta_0,\dots,\cE^{\lceil \frac{N_\mathrm{max}}{2} \rceil-1} \zeta_0)$ (resp. $(\uppsi_0,\dots,E^{\lceil \frac{N_\mathrm{max}}{2} \rceil-1}\uppsi_0)$) using Lemma \ref{lem:zeta_k}, the linear system \eqref{eq:alpha_zeta=psi} can be reformulated as 
\begin{equation}
\label{eq:linear_system}
\begin{pmatrix}
M_1 \\
M_2 
\end{pmatrix} \begin{pmatrix}
\zeta_0^T \\
\zeta_0^T\cE \\
\vdots \\
\zeta_0^T \cE^{\lceil N_\mathrm{max}/2 \rceil-1} 
\end{pmatrix} \alpha = \begin{pmatrix}
N_1 \\
N_2 
\end{pmatrix} \begin{pmatrix}
\uppsi_0 \\
E \uppsi_0 \\
\vdots \\
E^{\lceil N_\mathrm{max}/2 \rceil-1} \uppsi_0 
\end{pmatrix} ,
\end{equation}
where $M_1 = (\mu_j^{(k)})_{0 \leq j,k \leq \lceil N_\mathrm{max}/2 \rceil-1},$ and $M_2 = (\nu_j^{(k)})_{0 \leq j,k \leq \lceil N_\mathrm{max}/2 \rceil-1},$ with $\mu_j^{(k)}$ and $\nu_j^{(k)}$ given by Lemma \ref{lem:zeta_k}. $N_1$ and $N_2$ the same matrices as $M_1$ and $M_2$ but where the coefficients $\mu_j^{(k)}$ and $\nu_j^{(k)}$ are generated using $(v_{2k})$ instead of $(w_{2k})$. 

We will show that if $n \leq 2$, then $N_\mathrm{max}=2n$, otherwise, $N_\mathrm{max}=5$. Equation \eqref{eq:linear_system} is equivalent to 
\begin{equation*}
\begin{cases}
M_1 (\zeta_0^T \cE^k)_{0 \leq k \leq \lceil N_\mathrm{max}/2 \rceil-1} \alpha = N_1 (E^k \uppsi_0)_{0 \leq k \leq \lceil N_\mathrm{max}/2 \rceil-1} \\
M_2 (\zeta_0^T \cE^k)_{0 \leq k \leq \lceil N_\mathrm{max}/2 \rceil-1} \alpha = N_2 (E^k \uppsi_0)_{0 \leq k \leq \lceil N_\mathrm{max}/2 \rceil-1}, 
\end{cases}
\end{equation*}
hence Equation \eqref{eq:linear_system} has a solution if and only if 
\[
M_2 M_1^{-1} N_1 (E^k \uppsi_0)_{0 \leq k \leq \lceil N_\mathrm{max}/2 \rceil-1} = N_2(E^k \uppsi_0)_{0 \leq k \leq \lceil N_\mathrm{max}/2 \rceil-1}. 
\]
Since this holds for any value $E$, a necessary and sufficient condition to solve \eqref{eq:linear_system} is $M_2 M_1^{-1} N_1 = N_2$.
\newline

For $n=1$, $M_1 = N_1 = \begin{pmatrix} 1 \end{pmatrix}$ and $M_2 = N_2 = \begin{pmatrix} -Z \end{pmatrix}$ hence \eqref{eq:linear_system} is solvable when $N_\mathrm{max}=2$. 
The remainder $\psi_{00} - \alpha \cR$ belongs to $\cK^{\infty,\frac{7}{2}-\varepsilon}(\Gamma)$, hence by Lemma \ref{lem:cK_a_decay}, there exists a constant $C$ independent of $r_c$ such that
\[
\| \psi_{00} - \alpha \cR \|_{L^2(0,r_c)} \leq C {r_c}^{5/2-\varepsilon}.
\]

For $n=2$, we can check using Lemma \ref{lem:zeta_k} that
\[
M_2 M_1^{-1} = \begin{pmatrix}
-Z & 0 \\
\frac{Z^3}{6} & -\frac{2Z}{3}
\end{pmatrix}.
\]
Thus $M_2 M_1^{-1} N_1 = N_2$ and \eqref{eq:linear_system} has a solution  $(\alpha_1,\alpha_2)$ such that $\psi_{00}(r) - \alpha^T \cR(r)$ belongs to $\cK^{\infty,\frac{11}{2}-\varepsilon}(\Gamma)$, hence by Lemma \ref{lem:cK_a_decay}, 
$\| \psi_{00}(r) - \alpha^T \cR(r) \|_{L^2(0,r_c)} \leq C {r_c}^{9/2-\varepsilon}$.

For $n \geq 3$, the dependence on the atomic potential $W_\mathrm{at}$ does not vanish in $M_2 M^{-1}_1$. For example, the $(3,1)$ coordinate of the matrix $M_2M^{-1}_1$ has a term equal to $\frac{391}{6075} Z^3 w_0$ which is unlikely to be compensated in general. For $n \geq 3$, we thus have:
$$
\psi_{00}(r) - \alpha^T \cR(r) = \cO(r^5),
$$
hence $\| \psi_{00}(r) - \alpha^T \cR(r) \|_{L^2(0,r_c)} \leq C {r_c}^{11/2}$ for a constant $C$ independent of $r_c$.
\end{proof}

\subsection{Proof of Proposition~\ref{prop:d-th derivative}}
\label{subsec:proof_prop_dth_derivative}

The derivative jump of the $d$-th derivative jump at $r=r_c$ of $\psh{\widetilde{p}}{\widetilde{\psi}}^T \widetilde{\cR}^{(d)}$ needs to be estimated. 

\begin{lem}
\label{lem:d-th der jump}
There exists a positive constant $C$ independent of $r_c$ such that
\begin{equation*}
\forall \, 0 < {r_c} < r_\mathrm{min}, \ \left| \psh{\widetilde{p}}{\widetilde{\psi}}^T [\widetilde{\cR}^{(d)}]_{{r_c}} \right| \leq  \frac{C}{{r_c}^{d-1}} ,
\end{equation*}
where $[\widetilde{\cR}^{(d)}]_{{r_c}} = \lim\limits_{h \to 0, h>0} \widetilde{\cR}^{(d)}(r_c+h)-\widetilde{\cR}^{(d)}(r_c-h)$.
For any $k \in \N$, we have
\begin{equation*}
\left\| \psh{\widetilde{p}}{\widetilde{\psi}}^T \left(\cR^{(k)} - \widetilde{\cR}^{(k)} \right) \right\|_{L^\infty(0,r_c)} \leq \frac{C}{{r_c}^k}.
\end{equation*}
\end{lem}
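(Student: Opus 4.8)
The plan is to reduce both bounds, via the radial picture, to (i) elementary scaling estimates for the radial PAW functions on $[0,r_c]$ and (ii) the uniform-in-$r_c$ control of the projection vector $\psh{\widetilde{p}}{\widetilde{\psi}}$ that is already the heart of Lemma~\ref{lem:tilde_p_stuff}. First I would reuse the identity established in the proof of Proposition~\ref{prop:cusp}, namely $\psh{\widetilde{p}}{\widetilde{\psi}}=A^{-1}\psh{p}{\psi_{00}}_{[0,r_c]}$ with $A=(\psh{p_j}{R_k}_{[0,r_c]})_{1\le j,k\le n}$ invertible by Assumption~\ref{assump:2}, together with Lemma~\ref{lem:approx}, which provides a vector $\alpha\in\R^n$ (one may take it independent of $r_c$, cf. the proof of that lemma) with $\alpha^T\cR(0)=\psi(0)$, $\|\alpha\|\le C$ and $\|\psi_{00}-\alpha^T\cR\|_{L^2(0,r_c)}\le Cr_c^{1/2-\varepsilon+\min(2n,5)}$. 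Setting $g:=\psi_{00}-\alpha^T\cR$ and using $\psh{p_j}{\alpha^T\cR}_{[0,r_c]}=(A\alpha)_j$, this gives
\[
\psh{\widetilde{p}}{\widetilde{\psi}}=\alpha+A^{-1}\psh{p}{g}_{[0,r_c]},\qquad g(0)=0,
\]
so that both inequalities reduce, by $|a^Tb|\le\|a\|\,\|b\|$, to knowing the sizes of $[\widetilde{\cR}^{(d)}]_{r_c}$, of $\cR^{(k)}-\widetilde{\cR}^{(k)}$ on $(0,r_c)$, and of the error term $A^{-1}\psh{p}{g}_{[0,r_c]}$.

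For (i) I would rescale by $r=r_cs$ and put $q_n(s):=\widetilde{R}_n(r_cs)$. As a function of the scalar variable $R_n$ is smooth up to any fixed order on $[0,r_\mathrm{min}]$ (its singular expansion contains only integer powers $r^j$), hence $\|R_n^{(k)}\|_{L^\infty(0,r_c)}\le C$ for every $k$ we need. The polynomial $\widetilde{R}_n$ is obtained from $R_n$ and its first $d-1$ derivatives at $r_c$ by solving a unisolvent Hermite problem in a fixed finite-dimensional space of even polynomials; after rescaling, $q_n$ solves the same problem with data $(R_n(r_c),r_cR_n'(r_c),\dots,r_c^{d-1}R_n^{(d-1)}(r_c))\to(R_n(0),0,\dots,0)$ as $r_c\to0$, so $q_n$ depends smoothly on $r_c\in[0,r_\mathrm{min})$ with $q_n|_{r_c=0}=R_n(0)$, whence $\|q_n-R_n(0)\|_{C^m[0,1]}\le C_mr_c$ for every $m$. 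Translating back, $\|\widetilde{R}_n^{(k)}\|_{L^\infty(0,r_c)}=r_c^{-k}\|q_n^{(k)}\|_{L^\infty(0,1)}\le Cr_c^{\,1-k}$ for $k\ge1$, hence $\|\cR^{(k)}-\widetilde{\cR}^{(k)}\|_{L^\infty(0,r_c)}\le Cr_c^{\,1-k}\le Cr_c^{-k}$ for all $k$, and, since $\widetilde{R}_n^{(d)}(r_c^-)=r_c^{-d}q_n^{(d)}(1)=O(r_c^{1-d})$ while $R_n^{(d)}(r_c)=O(1)$,
\[
\bigl|[\widetilde{R}_n^{(d)}]_{r_c}\bigr|=\bigl|R_n^{(d)}(r_c)-\widetilde{R}_n^{(d)}(r_c^-)\bigr|\le \frac{C}{r_c^{\,d-1}}.
\]
The same $r_c$-expansion also exhibits each of $[\widetilde{\cR}^{(d)}]_{r_c}$ and $(\cR^{(k)}-\widetilde{\cR}^{(k)})(r)$, $r\in(0,r_c)$, as a finite combination $\sum_m a_m\,\cR^{(m)}(0)$ with coefficients of the respective orders $O(r_c^{1-d})$ and $O(r_c^{1-k})$.

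Assembling: for the first bound, $\psh{\widetilde{p}}{\widetilde{\psi}}^T[\widetilde{\cR}^{(d)}]_{r_c}=\alpha^T[\widetilde{\cR}^{(d)}]_{r_c}+\psh{p}{g}_{[0,r_c]}^TA^{-T}[\widetilde{\cR}^{(d)}]_{r_c}$; the first summand is $O(r_c^{1-d})$ since $\|\alpha\|\le C$ and $\|[\widetilde{\cR}^{(d)}]_{r_c}\|\le Cr_c^{1-d}$, while the second is handled by expanding $[\widetilde{\cR}^{(d)}]_{r_c}$ over the $\cR^{(m)}(0)$ and invoking Lemma~\ref{lem:tilde_p_stuff} for each, which bounds it by $Cr_c^{1-d}\cdot r_c^{-3/2}\|g\|_{L^2(B(0,r_c))}\le Cr_c^{1-d}\|g\|_{L^\infty(0,r_c)}=o(r_c^{1-d})$; this yields $|\psh{\widetilde{p}}{\widetilde{\psi}}^T[\widetilde{\cR}^{(d)}]_{r_c}|\le C/r_c^{d-1}$. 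The second bound follows verbatim with $[\widetilde{\cR}^{(d)}]_{r_c}$ replaced by $(\cR^{(k)}-\widetilde{\cR}^{(k)})(r)$ and $r_c^{1-d}$ by $r_c^{1-k}\le r_c^{-k}$. The main obstacle is step (ii): as $r_c\to0$ all the PAW radial functions $\widetilde{R}_n$ collapse to the nonzero constants $R_n(0)$, so their Gram matrix $B_0$, and with it $A$, degenerates; the uniform control of $A^{-1}$ and of the functionals $f\mapsto\psh{p_j}{f}_{[0,r_c]}$ needed here is exactly the content of Lemma~\ref{lem:tilde_p_stuff}, proved by re-expressing the $p_j$ in a basis of $\spanv\{\widetilde{R}_1,\dots,\widetilde{R}_n\}$ hierarchically adapted to the orders $r^0,\dots,r^{n-1}$ — such a basis existing by Assumption~\ref{assump:cR_free_family} — in which, after rescaling each level by the appropriate power of $r_c$, every quantity stays bounded. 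Everything else is bookkeeping with $r=r_cs$.
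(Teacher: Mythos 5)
Your step (i) is sound: the rescaled Hermite argument ($\widetilde R_n(r_c s)=q_n(s)$, $q_n=R_n(r_c)P_0+\cO(r_c)$ in any $C^m[0,1]$ norm) does give $\bigl\|[\widetilde{\cR}^{(d)}]_{r_c}\bigr\|\le C r_c^{1-d}$ and $\|\cR^{(k)}-\widetilde{\cR}^{(k)}\|_{L^\infty(0,r_c)}\le C r_c^{-k}$, and it captures in an elementary way the gain of one power of $r_c$ that the paper extracts instead from a cancellation in the pairing (Step 1 of the paper's proof: the $e_0$-component of $(C^{(Q)}_{r_c})^T (C^{(Q)}_{r_c} G_{r_c})^{-T}\bigl({r_c}^d \cR^{(d)}(r_c) - C_{r_c} P^{(d)}(1)\bigr)$ is $\cO(r_c)$, while by Hardy's inequality the data vector is $\psi(0)e_0+\cO(r_c)$). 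The genuine gap is in your step (ii). You reduce everything to the boundedness of the pairing of $\psh{\widetilde{p}}{g}$ (equivalently $A^{-1}\psh{p}{g}_{[0,r_c]}$) against the jump vector, and you claim this is handled ``by expanding $[\widetilde{\cR}^{(d)}]_{r_c}$ over the $\cR^{(m)}(0)$ and invoking Lemma~\ref{lem:tilde_p_stuff} for each''. Lemma~\ref{lem:tilde_p_stuff} only controls the pairing with the single direction $\cR(0)=\zeta_0$. It gives no bound for the complementary directions, and these are genuinely worse: $\cR'(0)=-Z\cR(0)$, so the vectors $\cR^{(m)}(0)$ are not a basis, and by Lemma~\ref{lem:zeta_k} the new directions entering the expansion of $\cR^{(m)}(0)$ are $\cE^{j}\zeta_0$, $j\ge 1$, for which the analogue of Lemma~\ref{lem:tilde_p_stuff} degrades by a factor of order $r_c^{-2j}$ (this is exactly why the dual family $(h_k)$ in the paper is taken dual to the \emph{weighted} vectors ${r_c}^{2j}\cE^j\zeta_0$, and why Step 1 only bounds $(C^{(Q)}_{r_c})^T(C^{(Q)}_{r_c}G_{r_c})^{-T}$ applied to ${r_c}^{j}\cR^{(j)}(r_c)$, not to $\cR^{(j)}(r_c)$ itself). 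Hence your inequality ``$\le Cr_c^{1-d}\cdot r_c^{-3/2}\|g\|_{L^2(B(0,r_c))}$'' does not follow from the cited lemma; what your factorized estimate really requires is the uniform-in-$r_c$ boundedness of the full vector $\psh{\widetilde{p}}{\widetilde{\psi}}$, and proving that is precisely the graded analysis of \eqref{eq:d-th-der-jump_even}--\eqref{eq:d-th-der-jump_d_geq_2n}, not a corollary of Lemma~\ref{lem:tilde_p_stuff}.

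A secondary point: even granting a hypothetical all-directions version of Lemma~\ref{lem:tilde_p_stuff} with the same $r_c^{-3/2}$ constant, your route goes through Lemma~\ref{lem:approx}, so the error you can absorb is only $\|g\|_{L^2(0,r_c)}\lesssim r_c^{1/2-\varepsilon+\min(2n,5)}$; combined with the actual $r_c^{-2(n-1)}$ degradation of the projector functionals this closes for $n\le 3$ but fails for larger $n$, because $\min(2n,5)$ saturates. The paper's proof needs neither Lemma~\ref{lem:approx} nor boundedness of $\psh{\widetilde{p}}{\widetilde{\psi}}$: it writes the quantity via \eqref{eq:tildep_f}, proves \eqref{eq:d-th-der_step1} and \eqref{eq:d-th-der_step2}, and concludes for every $n$. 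To repair your argument you would have to establish the missing pairing estimates for the directions $\cE^j\zeta_0$, which amounts to reproducing the paper's Step 1; at that point your step (i) becomes an alternative, somewhat more transparent, way to organize the $r_c$-counting, but it does not by itself shorten the proof.
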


The proof of this lemma is given in the appendix. 

\begin{proof}[Proof of Proposition \ref{prop:d-th derivative}]
Using the relation \eqref{eq:Fourier-spherical-harm}, we have
\begin{equation}
\itg{[-\frac{1}{2},\frac{1}{2}]^3}{}{(1-\omega(r))\psh{\widetilde{p}}{\widetilde{\psi}}^T (\Phi - \widetilde{\Phi})  e^{-i \mathbf{K}\cdot \mathbf{r}}}{\mathbf{r}} = 4 \pi \itg{0}{{r_c}}{(1-\omega(r)) \psh{\widetilde{p}}{\widetilde{\psi}}^T (\cR(r) - \widetilde{\cR}(r)) j_0(Kr)r^2}{r}.
\end{equation}
Since $\omega$ is equal to 1 in a neighbourhood of 0, we can restrict the integral in the equation above to the interval $({r_c} - \eta, {r_c})$ for some $\eta>0$. Recall that 
$$
j_0(x) = \frac{\sin(x)}{x},
$$
thus
\begin{equation}
\label{eq:dth-der1}
\itg{[-\frac{1}{2},\frac{1}{2}]^3}{}{(1-\omega(r))\psh{\widetilde{p}}{\widetilde{\psi}}^T (\Phi - \widetilde{\Phi})  e^{-i \mathbf{K}\cdot \mathbf{r}}}{\mathbf{r}} = \frac{4\pi}{K} \psh{\widetilde{p}}{\widetilde{\psi}}^T \itg{{r_c}-\eta}{{r_c}}{(1-\omega(r))  (\cR(r) - \widetilde{\cR}(r)) \sin(Kr)r}{r}.
\end{equation}
We denote by $f$ the function $r \mapsto r (1-\omega(r))  (\cR(r) - \widetilde{\cR}(r))$ and use 
$$
\itg{{r_c}-\eta}{{r_c}}{f(r) \sin(Kr)}{r} = \mathrm{Im} \left( \itg{{r_c}-\eta}{{r_c}}{f(r) e^{iKr}}{r} \right).
$$
By definition of the cut-off function, for any $k \in \N$, we have 
\begin{equation}
f^{(k)}({r_c} - \eta) = 0,
\end{equation}
and for $k \in \N^*$, $(1-\omega)^{(k)}({r_c}) = 0$. Thus by integration by parts, 
$$
\itg{{r_c}-\eta}{{r_c}}{f(r) e^{iKr}}{r} = \left[ f(r) \frac{e^{iKr}}{iK} \right]_{{r_c}-\eta}^{r_c} - \frac{1}{iK} \itg{{r_c}-\eta}{{r_c}}{f'(r) e^{iKr}}{r} = 
\frac{i}{K} \itg{{r_c}-\eta}{{r_c}}{f'(r) e^{iKr}}{r}.
$$
As $\cR-\widetilde{\cR}$ is $C^{d-1}$ but not $C^{d}$ at ${r_c}$, by integrating by parts $d$ times, we have:
\begin{equation}
\itg{{r_c}-\eta}{{r_c}}{f(r) e^{iKr}}{r} = \frac{i^{d+1} {r_c}}{K^{d+1}} [\widetilde{\cR}^{(d)}]_{{r_c}}^{} e^{iK{r_c}} - \frac{i^{d+1} }{K^{d+1}} \itg{{r_c} - \eta}{{r_c}}{f^{(d+1)}(r) e^{iKr}}{r}.
\end{equation}
Thus inserting the last equation in \eqref{eq:dth-der1}, we obtain
\begin{multline*}
\itg{[-\frac{1}{2},\frac{1}{2}]^3}{}{(1-\omega(r))\psh{\widetilde{p}}{\widetilde{\psi}}^T (\Phi - \widetilde{\Phi})  e^{-i \mathbf{K}\cdot \mathbf{r}}}{\mathbf{r}} = \mathrm{Im}(i^{d+1}e^{iK{r_c}}) \frac{4\pi{r_c}}{K^{d+2}} \psh{\widetilde{p}}{\widetilde{\psi}}^T [\widetilde{\cR}^{(d)}]_{{r_c}}^{} \\
- \mathrm{Im} \left(\frac{4 \pi i^{d+1} }{K^{d+2}} \itg{{r_c} - \eta}{{r_c}}{\psh{\widetilde{p}}{f}^T f^{(d+1)}(r) e^{iKr}}{r}\right).
\end{multline*}
According to Lemma \ref{lem:d-th der jump}, 
\[
\left| \psh{\widetilde{p}}{\widetilde{\psi}}^T [\widetilde{\cR}^{(d)}]_{{r_c}} \right| \leq  \frac{C}{{r_c}^{d-1}}.
\]
Furthermore, we have
\begin{equation*}
\psh{\widetilde{p}}{\widetilde{\psi}}^T f^{(d+1)}(r) = r \left( (1-\omega) \psh{\widetilde{p}}{\widetilde{\psi}} (\cR - \widetilde{\cR}) \right)^{(d+1)} + \left( (1-\omega) \psh{\widetilde{p}}{\widetilde{\psi}} (\cR - \widetilde{\cR}) \right)^{(d)} .
\end{equation*}
By assumption on $\omega$, we know that $\|\omega^{(k)}\|_{L^\infty(0,r_c)} \leq C {r_c}^{-k}$, hence
\begin{align*}
\left\| \psh{\widetilde{p}}{\widetilde{\psi}}^T f^{(d+1)} \right\|_{L^\infty(0,r_c)} & \leq r_c \sumlim{k=0}{d+1} \binom{d+1}{k} \left\| (1-\omega)^{(k)} (\cR - \widetilde{\cR})^{(d+1-k)} \right\|_{L^\infty(0,r_c)} \\
& \qquad \qquad + \sumlim{k=0}{d} \binom{d}{k} \left\| (1-\omega)^{(k)} (\cR - \widetilde{\cR})^{(d-k)} \right\|_{L^\infty(0,r_c)} \\
& \leq \frac{C}{{r_c}^d},
\end{align*}
where we used Lemma \ref{lem:d-th der jump}. Thus, we obtain
\begin{align*}
\left| \itg{{r_c} - \eta}{{r_c}}{\psh{\widetilde{p}}{f}^T f^{(d+1)}(r) e^{iKr}}{r} \right| & \leq r_c \left\| \psh{\widetilde{p}}{\widetilde{\psi}}^T f^{(d+1)} \right\|_{L^\infty(0,r_c)} \\ 
& \leq \frac{C}{{r_c}^{d-1}},
\end{align*}
which finishes the proof of this proposition.
\end{proof}

\subsection{Convergence theorem}

To prove the estimate on the eigenvalues, we will use the following classical result (\cite{weinberger1974variational}, p. 68).

\begin{prop}
\label{prop:eig_error}
Let $H$ be a self-adjoint coercive $H^1$-bounded operator, $E_1 \leq \dots \leq E_n$ be the lowest eigenvalues of $H$ and $\psi_1, \dots, \psi_n$ be $L^2$-normalized associated eigenfunctions.  Let $E^{(M)}_1 \leq \dots \leq E^{(M)}_n$ be the lowest eigenvalues of the Rayleigh quotient of $H$ restricted to the subspace $V_M$ of dimension $M$. 

Let $w_k \in V_M$ for $1 \leq k \leq n$ be such that 
\begin{equation*}
\sumlim{k=1}{n} \| w_k - \psi_k \|^2_{H^1} < 1
\end{equation*}

Then there exists a positive constant $C$ which depends on the $H^1$ norm of $H$ and the coercivity constant such that for all $1 \leq k \leq n$
\begin{equation*}
\left| E^{(M)}_k - E_k \right| \leq C \sumlim{k=1}{n} \| w_k - \psi_k \|^2_{H^1} 
\end{equation*}
\end{prop}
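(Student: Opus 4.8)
The plan is to prove the two one-sided inequalities $E_k \le E^{(M)}_k$ and $E^{(M)}_k \le E_k + C\delta^2$ separately, where I abbreviate $\delta^2 := \sumlim{k=1}{n}\|w_k-\psi_k\|_{H^1}^2 < 1$; since they combine to give $|E^{(M)}_k-E_k| = E^{(M)}_k - E_k \le C\delta^2$, this is enough. Write $a(u,v) = \psh{Hu}{v}$ for the associated bilinear form, and let $\alpha, M_a > 0$ denote the coercivity constant and the $H^1$-bound, so that $a(v,v)\ge \alpha\|v\|_{H^1}^2$ and $|a(u,v)|\le M_a\|u\|_{H^1}\|v\|_{H^1}$; it is on these two constants that the final $C$ will depend. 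The lower bound $E_k \le E^{(M)}_k$ is immediate from the Courant--Fischer min--max characterisation: since $V_M$ is a subspace of the form domain $H^1_\mathrm{per}(\Gamma)$, minimising the maximal Rayleigh quotient $R(v)=a(v,v)/\|v\|^2$ over $k$-dimensional subspaces of $V_M$ can only increase the value compared with minimising over \emph{all} $k$-dimensional subspaces, which returns $E_k$.

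For the upper bound I would use the trial subspace $W := \spanv(w_1,\dots,w_k)\subset V_M$. First one checks $\dim W = k$: if $\sumlim{j}{}c_j w_j = 0$ with $\sumlim{j}{}c_j^2 = 1$, then setting $u := \sumlim{j}{}c_j\psi_j$ and $e := \sumlim{j}{}c_j(w_j-\psi_j)$ gives $\|u\|=1$ (the $\psi_j$ are $L^2$-orthonormal) while $\|e\|\le\|e\|_{H^1}\le\delta\|u\|<1$ by Cauchy--Schwarz, so $0=\|u+e\|\ge 1-\delta>0$, a contradiction. Hence by min--max $E^{(M)}_k \le \max_{0\ne v\in W} R(v)$, and it remains to bound $R(v)$ for an arbitrary $v=\sumlim{j=1}{k} c_j w_j$.

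The crucial point --- and the step I expect to be the only genuine difficulty --- is to obtain the quadratic rate $\delta^2$ rather than the linear rate $\delta$ that the naive expansion $a(v,v)=a(u,u)+2a(u,e)+a(e,e)$ would yield through the cross term $2a(u,e)$. To eliminate the linear contribution I would instead expand $v=\sumlim{j}{}d_j\psi_j$ in the full orthonormal eigenbasis of $H$, so that $a(v,v)=\sumlim{j}{}d_j^2 E_j$, $\|v\|^2=\sumlim{j}{}d_j^2$, and therefore $a(v,v)-E_k\|v\|^2=\sumlim{j}{}d_j^2(E_j-E_k)$. The terms with $j\le k$ are nonpositive and are discarded. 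For $j>k$ one uses that $u\in\spanv(\psi_1,\dots,\psi_k)$ forces $d_j=\psh{e}{\psi_j}$, so the high-frequency part $e_{>k}:=\sumlim{j>k}{}d_j\psi_j$ equals $(\mathrm{Id}-P_k)e$, where $P_k$ is the $L^2$-projection onto the first $k$ eigenfunctions. Since the eigenfunctions are $a$-orthogonal, $a(e_{>k},e_{>k})\le a(e,e)\le M_a\|e\|_{H^1}^2\le M_a\delta^2\|u\|^2$; and because $E_k\ge E_1\ge\alpha>0$ by coercivity, dropping $-E_k\|e_{>k}\|^2$ only increases the right-hand side, giving $a(v,v)-E_k\|v\|^2\le \sumlim{j>k}{}d_j^2 E_j = a(e_{>k},e_{>k})\le M_a\delta^2\|u\|^2$.

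Finally, the denominator is bounded below by $\|v\|^2\ge(\|u\|-\|e\|)^2\ge\|u\|^2(1-\delta)^2$, so that $R(v)-E_k\le M_a\delta^2/(1-\delta)^2$ uniformly over $v\in W$. Combined with the lower bound this gives $0\le E^{(M)}_k-E_k\le \frac{M_a}{(1-\delta)^2}\,\delta^2$. Restricting, as in every application where $\delta\to 0$ as $M\to\infty$, to the regime $\delta\le\tfrac12$ fixes $C=4M_a$, a constant depending only on the $H^1$-bound and the coercivity constant, which completes the proof.
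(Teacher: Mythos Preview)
Your proof is correct. The paper does not supply its own proof of this proposition: it is stated there as a classical result with a reference to Weinberger (\emph{Variational Methods for Eigenvalue Approximation}, 1974, p.~68), so there is no paper proof to compare against. Your argument is the standard one: min--max for the lower bound, and for the upper bound the key observation that the cross term disappears when one expands the trial vector $v=u+e$ in the eigenbasis, since for $j>k$ the coefficient $d_j$ comes solely from $e$. The estimate $a(e_{>k},e_{>k})\le a(e,e)$ via $a$-orthogonality of eigenfunctions, the positivity $E_k>0$ from coercivity, and the Cauchy--Schwarz bound $\|e\|_{H^1}\le\delta\|u\|$ are all sound.

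Two minor remarks. First, the expansion in a full eigenbasis implicitly uses that $H$ has compact resolvent (pure point spectrum), which holds in the periodic setting of the paper but is not stated in the proposition; this is harmless in context. Second, your final constant $M_a/(1-\delta)^2$ is not uniform as $\delta\to 1^-$, so the statement ``$C$ depends only on $M_a$ and $\alpha$'' is literally achieved only after restricting to, say, $\delta\le\tfrac12$. You note this yourself, and since in every application $\delta\to 0$ as $M\to\infty$, this is the intended regime.
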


We would like to apply this result to $\psi_{K} = (\mathrm{Id}+T) \widetilde{\psi}_{K}$ where $\widetilde{\psi}_{K}$ is the truncation of the plane-wave expansion of $f$ to the wave number ${K}$. In order to do this, we need to show that the $H^1_\mathrm{per}$-norm of $(\mathrm{Id}+T)\widetilde{\psi}$ is bounded by a the $H^1_\mathrm{per}$-norm of $\widetilde{\psi}$ independently of the cut-off radius $r_c$.

\begin{lem}
\label{lem:Tcont}
There exists a positive constant $C$ independent of $r_c$ such that for any function $f \in H^1_\mathrm{per}([-\frac{1}{2},\frac{1}{2}]^3)$
$$
\left\| (\mathrm{Id}+T) f \right\|_{H^1_\mathrm{per}} \leq C \|f\|_{H^1_\mathrm{per}} .
$$
\end{lem}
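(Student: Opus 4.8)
The plan is to estimate $T$ site by site. The augmentation balls $B(\mathbf R_I,r_c)$ ($I=1,\dots,N_\mathrm{at}$, together with their $\cR$-translates) are pairwise separated by a positive distance whenever $r_c<r_\mathrm{min}$, and $\mathrm{supp}(T_If)\subset B(\mathbf R_I,r_c)$, so $\|Tf\|_{H^1_\mathrm{per}}^2=\sum_I\|T_If\|_{H^1_\mathrm{per}}^2$. Moreover $T_If$ depends on $f$ only through the numbers $\psh{\widetilde p^I_k}{f}$, hence only through the $\ell=m=0$ (spherical-average) component of $f$ restricted to $B(\mathbf R_I,r_c)$. It therefore suffices to prove a \emph{local} single-site estimate $\|T_If\|_{H^1_\mathrm{per}}\le C\|f\|_{H^1(B(\mathbf R_I,r_c))}$ with $C$ independent of $r_c$ and $I$: summing the squares over $I$ and using the orthogonality of spherical harmonics to bound the $\ell=0$ part of $f$ by $f$ then yields the lemma. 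From now on I place a single nucleus at the origin, drop the index $I$, and take $f$ radial.

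Writing $Tf=\sum_{k=1}^n a_k(\phi_k-\widetilde\phi_k)$ with $a_k=\psh{\widetilde p_k}{f}$, one has $\|Tf\|_{H^1}^2=a^{\top}Ga$, where $G=\big(\psh{\phi_k-\widetilde\phi_k}{\phi_{k'}-\widetilde\phi_{k'}}_{H^1}\big)_{k,k'}$. Let also $B_0=\big(\psh{\widetilde R_k}{\widetilde R_{k'}}_\chi\big)_{k,k'}$, with $\langle u,v\rangle_\chi:=\int_0^{r_c}\chi(r)u(r)v(r)\,r^2\,\mathrm dr$ (the matrix $B_\ell$ of Section~\ref{subsec:vpaw_method_for_solids} for $\ell=0$). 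Since $\psh{\widetilde p_k}{\widetilde\phi_{k'}}=\delta_{kk'}$ and $p_k\in\mathrm{span}\{\chi\widetilde R_1,\dots,\chi\widetilde R_n\}$, the combination $\sum_k a_k\widetilde R_k$ is (up to a fixed constant) the $\langle\cdot,\cdot\rangle_\chi$-orthogonal projection of the radial part of $f$ onto $\mathrm{span}\{\widetilde R_1,\dots,\widetilde R_n\}$; an orthogonal projection being a contraction, this gives $a^{\top}B_0\,a=\big\|\sum_k a_k\widetilde R_k\big\|_\chi^2\le C\|f\|_{L^2(B(0,r_c))}^2$. Hence the lemma reduces to the operator comparison $G\preceq C\,B_0$ with $C$ independent of $r_c$, that is,
\begin{equation}
\label{eq:Tcont_comparison}
\Big\|\sum_{k=1}^n c_k(\phi_k-\widetilde\phi_k)\Big\|_{H^1_\mathrm{per}}^2\ \le\ C\,\Big\|\sum_{k=1}^n c_k\widetilde R_k\Big\|_\chi^2\qquad\text{for all }c\in\R^n .
\end{equation}

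Proving \eqref{eq:Tcont_comparison} uniformly in $r_c$ is the crux, and also where the main difficulty lies: as $r_c\to0$ one has $\widetilde R_k\to R_k(0)$ uniformly on $[0,r_c]$, so $\widetilde R_1,\dots,\widetilde R_n$ all tend to parallel constants, $B_0$ becomes badly ill-conditioned, and any bound that treats the coefficients $c_k$ separately (for instance estimating $\|G\|$ and $\|B_0^{-1}\|$ independently) blows up. One must use the cancellation carried by the vector $c$. To bring it out I would rescale $r=r_c s$ and use the Taylor structure of Lemma~\ref{lem:zeta_k}: with $R_k(r)=\rho_0^{(k)}\sum_{j\ge0}P_j(\epsilon_k)r^j$, where $\rho_0^{(k)}=R_k(0)\neq0$ by Assumption~\ref{assump:radial_PAW_function_cusp} and the $P_j$ are fixed polynomials, one gets for $r_c$ small
\[
R_k(r_cs)=\rho_0^{(k)}\sum_{j\ge0}P_j(\epsilon_k)\,r_c^{\,j}s^j,\qquad
\widetilde R_k(r_cs)=\rho_0^{(k)}\sum_{j\ge0}P_j(\epsilon_k)\,r_c^{\,j}\Psi_j(s),
\]
where $\Psi_j$ is the even polynomial of degree $\le2d$ interpolating $s^j$ (value and derivatives) at $s=1$. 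The decisive point, and the place where the assumption $d\ge n$ is used, is that $\Psi_{2i}=s^{2i}$ for every $i\le d$ (the difference is an even polynomial of degree $\le 2d$ with a zero of order $\ge d+1$ at $\pm1$, hence $\equiv0$); consequently $\widetilde R_k-R_k$ carries one extra power of $r_c$ beyond the naive estimate, while, by the non-vanishing of the leading coefficients $\mu_i^{(i)}$ of Lemma~\ref{lem:zeta_k}, the $n$ functions $\widetilde R_k$ stay linearly independent uniformly in $r_c$ once the coefficient vector $c$ is rescaled componentwise by the powers $r_c^{2i}$, $0\le i\le n-1$, attached to the moments $\sum_k c_k\rho_0^{(k)}\epsilon_k^i$. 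After this rescaling both sides of \eqref{eq:Tcont_comparison} reduce, up to the common factor $r_c^{3}$ coming from the measure $r^2\,\mathrm dr$, to one and the same non-degenerate quadratic form in the rescaled coordinates --- the right-hand side being in fact larger by an extra factor $r_c^{2}$ --- which gives \eqref{eq:Tcont_comparison}; the finitely many remaining values $r_c\in[r_0,r_\mathrm{min})$ are handled by continuity, $B_0$ being invertible there by Assumption~\ref{assump:cR_free_family}. This establishes \eqref{eq:Tcont_comparison} and, with it, the lemma.
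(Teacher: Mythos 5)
Your overall framework is sound and in places genuinely different from the paper's: the reduction to a single site, the identity $\|T_I f\|_{H^1}^2=a^{\top}Ga$ with $a=\psh{\widetilde p}{f}$, and the observation that $\sum_k a_k\widetilde R_k$ is the $\chi$-weighted orthogonal projection of $f_{00}$, so that $a^{\top}B_0a\leq C\|f\|_{L^2(B_{r_c})}^2$, are correct and would even yield a slightly stronger conclusion (control by $\|f\|_{L^2}$ alone) than the paper, which instead writes $\psh{\widetilde p}{f}^T(\cR-\widetilde\cR)$ explicitly via Lemma~\ref{lem:norm_T}, exploits the uniformly bounded projector $G(P)^{1/2}C_{r_c}^T(C_{r_c}G(P)C_{r_c}^T)^{-1}C_{r_c}G(P)^{1/2}$ together with the bounds on $C_1^{-1}\cR$ and $C_1^{-1}\cR'$ from Lemma~\ref{lem:C_1C2}, and controls the $f$-dependent factor by Cauchy--Schwarz, H\"older and the embedding $H^1\hookrightarrow L^6$.

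The genuine gap is exactly at the step you yourself identify as the crux: the uniform-in-$r_c$ comparison $G\preceq C\,B_0$ is asserted, not proved. All of the difficulty of the lemma is concentrated there, because as $r_c\to0$ the family $(\widetilde R_k)$ degenerates and both quadratic forms become severely ill-conditioned; showing that they degenerate \emph{at the same rate in the same graded directions} requires the full bookkeeping of Lemma~\ref{lem:zeta_k} (moments $\sum_k c_k\epsilon_k^j\zeta_0^{(k)}$), uniform control of the cross terms between graded components, and control of the singular-expansion remainders (Lemma~\ref{lem:size_weighted_sobolev_remainder}); in the paper this work is what Lemmas~\ref{lem:C_1C2}, \ref{lem:norm_T} and \ref{lem:tilde_p_stuff} carry out in matrix form. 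Your sketch of this step also contains quantitative missteps which indicate the bookkeeping is not yet right: (i) the claim that the right-hand side is larger by an extra factor $r_c^{2}$ is false for the $H^1$ norm --- in the generic direction ($\sum_k c_k\zeta_0^{(k)}\neq0$) the gradient of $\phi_k-\widetilde\phi_k$ is $O(1)$ on $B(0,r_c)$, so both sides are of order $r_c^{3}$ and only comparability, not an $r_c^{2}$ gain, can hold (this matters, since an $r_c^2$ gain would give the false bound $\|Tf\|_{H^1}\lesssim r_c\|f\|_{L^2}$, contradicted by $f_{00}=\widetilde R_1$); (ii) the argument that $\Psi_{2i}=s^{2i}$ miscounts: the matching conditions give a zero of order $d$ (not $d+1$) at $\pm1$, so with even degree $2d$ the difference could be a multiple of $(s^2-1)^d$; the conclusion only holds with the paper's convention of even polynomials of degree at most $2d-2$ (the basis $P_0,\dots,P_{d-1}$ of \eqref{eq:P_k}). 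Until the comparison $G\preceq C\,B_0$ is established with these points repaired --- or replaced by the paper's explicit-formula route through Lemma~\ref{lem:norm_T} --- the proof is incomplete.
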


\begin{proof}
By definition, we have $(\mathrm{Id}+T)f  = f + \psh{\widetilde{p}}{f}^T (\Phi-\widetilde{\Phi}) $.
By Lemma \ref{lem:norm_T}, we have 
$$
\psh{\widetilde{p}}{f}^T (\cR(r) - \widetilde{\cR}(r)) = C_{r_c}^T (C_{r_c} G(P) C_{r_c}^T)^{-1} C_{r_c} \itg{0}{1}{\chi(t) P(t) f_{00}(r_ct) t^2}{t} \cdot \left( \begin{pmatrix}
C_1^{-1} \\ 0
\end{pmatrix} \cR(r) - P(\tfrac{r}{r_c}) \right),
$$
with $\left\|\begin{pmatrix} C_1^{-1} \\ 0\end{pmatrix} \cR(r)\right\|_{L^\infty(0,r_c)}$ uniformly bounded with respect to $r_c$. Thus it suffices to bound $\itg{0}{1}{t^2 \chi(r_c t) f_{00}(r_ct) P(t)}{t}$:
\begin{align*}
\left| \itg{0}{1}{t^2 \chi(r_c t) f_{00}(r_ct) P(t)^2}{t} \right| & \leq \left( \itg{0}{1}{t^2 \chi(r_ct)^2 P(t)}{t} \right)^{1/2} \left( \itg{0}{1}{t^2 f_{00}(r_ct)^2}{t} \right)^{1/2} \\
& \leq \frac{C}{{r_c}^{3/2}} \left( \itg{0}{r_c}{r^2 f_{00}(r)^2}{r} \right)^{1/2} \\
& \leq \frac{C}{{r_c}^{3/2}} \left( \itg{0}{r_c}{r^2 f_{00}(r)^6}{r} \right)^{1/6} \left( \itg{0}{r_c}{r^2}{r} \right)^{1/3} \\
& \leq \frac{C}{{r_c}^{1/2}} \|f\|_{L^6(B_{r_c})} \\
& \leq \frac{C}{{r_c}^{1/2}} \|f\|_{H^1(B_{r_c})}.
\end{align*}
We obtain  
$$
\left| \psh{p}{f_{00}}^T_{[0,r_c]} (\cR(r) - \widetilde{\cR}(r)) \right| \leq \frac{C}{{r_c}^{1/2}} \|f\|_{H^1(B_{r_c})}.
$$
Since $\left\|\begin{pmatrix} C_1^{-1} \\ 0\end{pmatrix} \cR'(r)\right\|_{L^\infty(0,r_c)} = \cO \left( \frac{1}{r_c} \right)$, we can prove similarly
$$
\left| \psh{p}{f_{00}}^T_{[0,r_c]} (\cR'(r) - \widetilde{\cR}'(r)) \right| \leq \frac{C}{{r_c}^{3/2}} \|f\|_{H^1(B_{r_c})}.
$$
Thus we get 
$$
\left\| (\mathrm{Id}+T) f \right\|_{H^1_\mathrm{per}} \leq C \|f\|_{H^1_\mathrm{per}}.
$$
\end{proof}

We have all the elements to prove the main convergence theorem.

\begin{proof}[Proof of Theorem \ref{theo:energy}]
Using Equation \eqref{eq:psi_decomposition}, Propositions \ref{prop:cusp}, \ref{prop:d-th derivative} and \ref{prop:remainder}, we have
\begin{align*}
\|\widetilde{\psi}_{M} - \widetilde{\psi}\|_{H^1_\mathrm{per}}  & \leq \left\| \widetilde{\psi}_M - \widetilde{\psi} - \eta_{M} + \eta\right\|_{H^1_\mathrm{per}} + \left\| \eta_M - \eta \right\|_{H^1_\mathrm{per}} \\
& \leq \left(\sumlim{K \geq M, \mathbf{K} \in (2\pi \Z)^3}{} (1+K^2) \left( \frac{{r_c}^{\min(n,3)+1-\varepsilon}}{K^4} + \frac{1}{{r_c}^{d-1}K^{d+2}} \right)^2 \right)^{\frac{1}{2}} + o \left( \frac{1}{M^{5/2 - \varepsilon}}\right). 
\end{align*}
By Proposition \ref{prop:eig_error}, we obtain
\begin{align*}
|E_M - E| & \leq C  \| (\mathrm{Id}+T)( \widetilde{\psi}_M - \widetilde{\psi}) \|_{H^1_\mathrm{per}}^2  \\
& \leq C  \| \widetilde{\psi}_M - \widetilde{\psi} \|_{H^1_\mathrm{per}}^2  \\
& \leq C \left( \left( \sumlim{K \geq M, \mathbf{K} \in (2\pi \Z)^3}{} (1+K^2) \left( \frac{{{r_c}}^{\min(2n,5)-\varepsilon}}{K^4} + \frac{1}{{{r_c}}^{d-1} K^{d+2}} \right)^2 \right)^{1/2} + o \left(\frac{1}{M^{5/2-\varepsilon}} \right) \right)^2    \\
& \leq C \left( \frac{{{r_c}}^{2\min(2n,5)-2\varepsilon}}{M^3} +\frac{{{r_c}}^{\min(2n,5)-\varepsilon}}{M^{4-\varepsilon}} + \frac{1}{{{r_c}}^{2d-2}M^{2d-1}} + o \left( \frac{1}{M^{5-\varepsilon}}\right) \right).
\end{align*}
\end{proof}

\section{Acknowledgements}

The author is grateful to Xavier Blanc, Antoine Levitt, Eric Cancès and Marc Torrent for helpful discussion and fruitful comments.

\appendix
\section{Appendix}

We have gathered in this section proofs of some technical lemmas, most of which are simple transpositions of lemmas that can be found in \cite{blanc2017paw,blanc2017vpaw1d}. 

\subsection{Results related to the weighted Sobolev space $\cK^{\infty,a}(\Gamma)$}

\begin{lem}
\label{lem:cK_a_decay}
Let $f \in \cK^{\infty,a}(\Gamma)$ and $0 < R <1$. Let $\ell \in \N$ and $m \in \N$ such that $|m| \leq \ell$. Then there exists a constant $C$ independent of $f$ such that
\[
\itg{0}{R}{|f_{\ell m}(r)| r^2}{r} \leq C R^{a+\frac{3}{2}} \|f\|_{\cK^{\infty,a}},
\]
and for $a \geq 1$
\[
\itg{0}{R}{|f_{\ell m}(r)|^2}{r} \leq C R^{2a-2} \|f\|_{\cK^{\infty,a}}.
\]
\end{lem}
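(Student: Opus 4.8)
The plan is to pass to polar coordinates centred at the nucleus and to reduce both bounds to one–dimensional weighted Cauchy--Schwarz inequalities. Since $f_{\ell m}(r)$ only involves the spherical average of $f$ on $S(0,r)$ and only the behaviour of $\chi$ near the nucleus enters, I would first reduce to the case where $R$ is small enough that $\chi(\mathbf{r}) = |\mathbf{r}| = r$ on $B(0,R)$ and this ball contains no other nucleus (this is the only range used in the applications, where $R = r_c < r_\mathrm{min}$); the remaining range is then routine, using the monotonicity of $R \mapsto \int_0^R$, the local integrability of $f$, and an enlargement of the constant. Writing $g(r) := \| f(r\,\cdot\,) \|_{L^2(S(0,1))}$, the starting point is the single inequality obtained by expanding the $\alpha = 0$ term of $\| f \|_{\cK^{\infty,a}}$ in polar coordinates on $B(0,R)$,
\[
\int_0^R r^{2-2a}\, g(r)^2 \, \mathrm{d}r \ \le\ \| \chi^{-a} f \|_{L^2(B(0,R))}^2 \ \le\ \| f \|_{\cK^{\infty,a}}^2 ,
\]
together with the pointwise bound $|f_{\ell m}(r)| \le g(r)$, which is just the Cauchy--Schwarz inequality on $S(0,1)$ combined with the $L^2(S(0,1))$-normalisation of the real spherical harmonics.

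For the first estimate I would split $r^2 = r^{1-a} \cdot r^{1+a}$ and apply Cauchy--Schwarz in the variable $r$, bounding $\int_0^R |f_{\ell m}(r)|\, r^2\, \mathrm{d}r \le \int_0^R g(r)\, r^2\, \mathrm{d}r$ by the product of $\bigl(\int_0^R r^{2-2a} g(r)^2\, \mathrm{d}r\bigr)^{1/2} \le \| f \|_{\cK^{\infty,a}}$ and $\bigl(\int_0^R r^{2+2a}\, \mathrm{d}r\bigr)^{1/2} = (3+2a)^{-1/2} R^{a + 3/2}$; this gives the claim with $C = (3+2a)^{-1/2}$ (note $a + 3/2 > 0$ in all cases of interest, which is what makes the second factor finite). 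For the second estimate, when $a \ge 1$ the exponent $2a-2$ is nonnegative, so $r^{2a-2} \le R^{2a-2}$ for $0 < r \le R$, and hence, using $|f_{\ell m}| \le g$ once more, $\int_0^R |f_{\ell m}(r)|^2\, \mathrm{d}r \le \int_0^R g(r)^2\, \mathrm{d}r = \int_0^R r^{2a-2}\bigl(r^{2-2a} g(r)^2\bigr)\, \mathrm{d}r \le R^{2a-2}\int_0^R r^{2-2a} g(r)^2\, \mathrm{d}r \le R^{2a-2}\,\| f \|_{\cK^{\infty,a}}^2$. (The right-hand side of the second bound should carry $\| f \|_{\cK^{\infty,a}}^2$ rather than $\| f \|_{\cK^{\infty,a}}$; the missing square is an evident typographical slip, visible from homogeneity in $f$.)

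I do not expect a genuine obstacle: once one is in polar coordinates, the argument is elementary. The only points that need a little care are the initial localisation — checking that it suffices to work on a small ball where $\chi$ is exactly the radial variable and no other singularity is present — and the bookkeeping of exponents, so that the weight $r^{2-2a}$ produced by the $\cK^{\infty,a}$ norm is correctly matched against $r^2\,\mathrm{d}r$ in the first bound and against $\mathrm{d}r$ in the second.
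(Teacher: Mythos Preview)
Your proposal is correct and follows essentially the same route as the paper: bound $|f_{\ell m}(r)|$ by a spherical average of $|f|$ (the paper uses $Y_{\ell m}\in L^\infty(S(0,1))$, you use Cauchy--Schwarz and the $L^2$-normalisation of $Y_{\ell m}$, which is equivalent), pass to polar coordinates, and then split the radial weight $r^{-a}\cdot r^{a}$ and apply Cauchy--Schwarz against the $\cK^{\infty,a}$ seminorm. Your observation that the second bound should read $\|f\|_{\cK^{\infty,a}}^2$ is also correct.
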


\begin{proof}
Since $Y_{\ell m} \in L^\infty(S(0,1))$, we have
\begin{align*}
\itg{0}{R}{|f_{\ell m}(r)| r^2}{r} & \leq C \itg{B(0,R)}{}{|f(\mathbf{r})|}{\mathbf{r}} \\
& \leq C \itg{B(0,R)}{}{r^{a}  r^{-a}|f(\mathbf{r})|}{\mathbf{r}} \\
& \leq C \left( \itg{0}{R}{r^{2a+2}  }{r} \right)^{1/2} \left( \itg{B(0,R)}{}{r^{-2a} |f(\mathbf{r})|^2}{\mathbf{r}} \right)^{1/2} \\
& \leq C R^{a+\frac{3}{2}} \|f\|_{\cK^{\infty,a}},
\end{align*}
where in the fourth inequality we used the definition of the weighted Sobolev space $\cK^{\infty,a}$.
The second identity is proved the same way.
\end{proof}

\begin{lem}
\label{lem:size_weighted_sobolev_remainder}
Let $N \in \N^*$ and $\eta$ a radial function such that $\eta \in \cK^{\infty,5/2 + N -\varepsilon}(\Gamma)$, $\varepsilon>0$. Then for $R$ sufficiently small, we have
\[
\| \eta \|_{L^\infty(0,R)} \leq \|f\|_{\cK^{\infty,a}} R^{N+\frac{1}{2}-\varepsilon}.
\]
\end{lem}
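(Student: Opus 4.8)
The plan is to turn this three-dimensional statement into a one-dimensional Sobolev embedding on the interval $(0,R)$, using that $\eta$ is radial. First I would record the two weighted $L^2$-bounds that the membership $\eta \in \cK^{\infty,5/2+N-\varepsilon}(\Gamma)$ supplies. Since $\eta$ is radial, $\eta_{00}(r) = \sqrt{4\pi}\,\eta(r)$, so the second inequality of Lemma~\ref{lem:cK_a_decay}, applied with $a = 5/2+N-\varepsilon$, gives at once
\[
\| \eta \|_{L^2(0,R)}^2 \leq C\, R^{3+2N-2\varepsilon} \, \| \eta \|_{\cK^{\infty,5/2+N-\varepsilon}} .
\]
For the derivative I would use that differentiation lowers the weight index by one: directly from the definition of $\cK^{k,\gamma}(\Gamma)$ one has $\partial_i \eta \in \cK^{\infty,3/2+N-\varepsilon}(\Gamma)$ for each $i$, because $\chi^{|\beta|-(3/2+N-\varepsilon)}\partial^\beta(\partial_i\eta) = \chi^{(|\beta|+1)-(5/2+N-\varepsilon)}\partial^{\beta+e_i}\eta \in L^2_\mathrm{per}(\Gamma)$. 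Moreover $\partial_i\eta(\mathbf{r}) = \eta'(r)\,\widehat{x}_i$ and the angular factor $\widehat{x}_i$ is a nonzero multiple of one of the real spherical harmonics $Y_{1,m}$, so $(\partial_i\eta)_{1,m}(r)$ is a nonzero constant times $\eta'(r)$. Applying the second inequality of Lemma~\ref{lem:cK_a_decay} once more (with $a = 3/2+N-\varepsilon \geq 1$, which holds for $N \geq 1$ and $\varepsilon$ small) yields
\[
\| \eta' \|_{L^2(0,R)}^2 \leq C\, R^{1+2N-2\varepsilon} \, \| \eta \|_{\cK^{\infty,5/2+N-\varepsilon}} .
\]

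With these two bounds $\eta \in H^1(0,R)$, and I would conclude with the elementary one-dimensional inequality $\| u \|_{L^\infty(0,R)}^2 \leq \tfrac{2}{R}\| u \|_{L^2(0,R)}^2 + R\,\| u' \|_{L^2(0,R)}^2$ for $u \in H^1(0,R)$, which comes from bounding $|u|$ at a point of $(R/2,R)$ where it attains its minimum over that subinterval by $(2/R)^{1/2}\| u \|_{L^2(0,R)}$, then integrating $u'$ and invoking Cauchy--Schwarz and Young's inequality. Substituting the two estimates gives $\| \eta \|_{L^\infty(0,R)}^2 \leq C\, R^{2+2N-2\varepsilon}\, \| \eta \|_{\cK^{\infty,5/2+N-\varepsilon}}$, hence $\| \eta \|_{L^\infty(0,R)} \leq C\, R^{1+N-\varepsilon}\, \| \eta \|_{\cK^{\infty,5/2+N-\varepsilon}}$; for $R \leq 1$ this is even stronger than the asserted bound $R^{N+1/2-\varepsilon}\,\| \eta \|_{\cK^{\infty,5/2+N-\varepsilon}}$, so no sharpening is needed.

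The one point that deserves care is the radial reduction, i.e. checking that for a radial $\eta$ the average $\eta_{00}$ is a nonzero multiple of $\eta$ and that the $\ell=1$ averages of the Cartesian derivatives $\partial_i\eta$ reproduce $\eta'$ up to nonzero constants; this is exactly what makes Lemma~\ref{lem:cK_a_decay} applicable in both places. The remaining ingredients — the weight-shift identity for $\partial_i$, the hypothesis $a \geq 1$ in Lemma~\ref{lem:cK_a_decay}, and the $H^1(0,R)\hookrightarrow L^\infty(0,R)$ embedding — are routine, and the slack in the exponent ($N+1-\varepsilon$ in place of $N+\tfrac{1}{2}-\varepsilon$) confirms that nothing in the estimates themselves is delicate.
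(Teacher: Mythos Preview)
Your argument is correct and follows essentially the same strategy as the paper: obtain weighted $L^2(0,R)$ bounds on $\eta$ and $\eta'$ from the membership $\eta\in\cK^{\infty,5/2+N-\varepsilon}(\Gamma)$, strip the weights to get powers of $R$, and finish with the one-dimensional Sobolev embedding $H^1(0,R)\hookrightarrow L^\infty(0,R)$. The only differences are cosmetic: the paper unpacks the definition of $\cK^{k,\gamma}$ directly rather than routing through Lemma~\ref{lem:cK_a_decay}, and it invokes the Sobolev embedding without tracking the $R$-dependence of the embedding constant, which is why it lands on the exponent $N+\tfrac12-\varepsilon$ while your explicit inequality $\|u\|_{L^\infty}^2 \leq \tfrac{2}{R}\|u\|_{L^2}^2 + R\|u'\|_{L^2}^2$ yields the slightly sharper $N+1-\varepsilon$.
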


\begin{proof}
By definition of the weighted Sobolev space, we have for $R$ sufficiently small,
\begin{equation*}
\itg{B(0,R)}{}{|\eta(r)|^2 r^{-5-2N+2\varepsilon}}{\mathbf{r}} < \infty,
\end{equation*}
hence
\[
\itg{0}{R}{|\eta(r)|^2 r^{-3-2N+2\varepsilon}}{r} < \infty,
\]
Similarly we have
\[
\itg{0}{R}{|\eta'(r)|^2 r^{-1-2N+2\varepsilon}}{r} < \infty.
\]
Therefore,
\[
\begin{cases}
\itg{0}{R}{|\eta(r)|^2}{r} \leq R^{2N+3-2\varepsilon} \itg{0}{R}{|\eta(r)|^2 r^{-1-2N+2\varepsilon}}{r}, \\
\itg{0}{R}{|\eta'(r)|^2}{r} \leq R^{2N+1-2\varepsilon} \itg{0}{R}{|\eta'(r)|^2 r^{-1-2N+2\varepsilon}}{r}.
\end{cases}
\]
By the Sobolev embedding theorem, we have the result. 
\end{proof}

\begin{note}
Lemma \ref{lem:size_weighted_sobolev_remainder} implies that the remainder $\eta_N$ of the singularity expansion \eqref{eq:weighted_sobolev} of radial functions are bounded : $\|\eta_N\|_{L^\infty(0,r_c)} \leq C {r_c}^{N+1}$, where the constant is independent of $r_c$. 
\end{note}

\subsection{Validity of the assumptions for the hydrogenoid atom}

We show in this subsection that the Assumptions \ref{assump:cR_free_family} and \ref{assump:radial_PAW_function_cusp} hold in the particular case of the hydrogenoid atom, \emph{i.e.} where in \eqref{eq:atomic_hamiltonian} $W_\mathrm{at}=0$. 
The eigenfunctions of the hydrogenoid atom can be written 
\[
\varphi_{n\ell m}(\mathbf{r}) = R_{n\ell}(r) Y_{\ell m}(\hat{\mathbf{r}}), \quad n \geq \ell+1,
\]
with
\begin{equation}
\label{eq:radial_hydrogenoid_wf}
R_{n\ell} (r) = \sqrt {{\left ( \frac{2 Z}{n} \right ) }^3\frac{(n-\ell-1)!}{2n(n+\ell)!} } e^{- Z r / {n}} \left ( \frac{2 Z r}{n } \right )^{\ell} L_{n-\ell-1}^{(2\ell+1)} \left ( \frac{2 Z r}{n } \right ),
\end{equation}
where $L_{n-\ell-1}^{(2\ell+1)}$ denotes the generalized Laguerre polynomials. The constant term of $L_{n-\ell-1}^{(2\ell+1)}$ is equal to $\binom{n + \ell}{n-\ell-1} > 0$, hence Assumption \ref{assump:radial_PAW_function_cusp} holds for the hydrogenoid atom wave function.

\begin{lem}
\label{lem:assumption_der_cR}
Let $R_{n\ell}(r) = L_{n-1}(\frac{2Z}{n}) e^{-\frac{Zr}{n}}$, where $\mathrm{deg}(L_{n-1}) = n-1$. Let $r>0$ and $\cR_k = (R_{1\ell}^{(k)}(r), \dots, R_{n\ell}^{(k)}(r))^T$ for $0 \leq k \leq n-1$. Then the matrix $(\cR_0, \dots, \cR_{n-1})$ is invertible. 
\end{lem}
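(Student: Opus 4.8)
The plan is to recognise the matrix $(\cR_0,\dots,\cR_{n-1})$ as the Wronskian matrix $\big(R_{m\ell}^{(k)}(r)\big)_{1\le m\le n,\,0\le k\le n-1}$ of the radial functions $R_{1\ell},\dots,R_{n\ell}$ and to show that its determinant $W(r)$ equals $C\,r^{n(n-1)/2}\,e^{-(\sum_{m=1}^n Z/m)\,r}$ for some nonzero constant $C$, which is nonzero whenever $r>0$. First I would write $R_{m\ell}(r)=P_{m-1}(r)\,e^{-Zr/m}$ with $P_{m-1}$ a polynomial of degree exactly $m-1$ (the leading coefficient of the Laguerre polynomial being nonzero) and the decay rates $Z/m$ pairwise distinct. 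From $R_{m\ell}^{(k)}=e^{-Zr/m}\big(\tfrac{d}{dr}-\tfrac{Z}{m}\big)^kP_{m-1}$ and factoring $e^{-Zr/m}$ out of the $m$-th row, one gets $W(r)=e^{-(\sum_m Z/m)r}\,Q(r)$ with $Q(r)=\det\big(\,(\tfrac{d}{dr}-\tfrac{Z}{m})^kP_{m-1}(r)\,\big)_{m,k}$. Since $(\tfrac{d}{dr}-\tfrac{Z}{m})$ preserves the degree of a polynomial, the $m$-th row of this matrix consists of polynomials of degree $\le m-1$, hence $\deg Q\le\sum_{m=1}^n(m-1)=\tfrac{n(n-1)}{2}$. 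Moreover $Q\not\equiv0$, because the $R_{m\ell}$, being exponential polynomials with distinct decay rates, are linearly independent on $(0,\infty)$. It therefore suffices to prove that $Q$ vanishes at $0$ to order at least $\tfrac{n(n-1)}{2}$: together with the degree bound and $Q\not\equiv0$ this forces $Q(r)=C\,r^{n(n-1)/2}$ with $C\neq0$.

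Since $e^{-(\sum_m Z/m)r}$ does not vanish at $0$, the order of vanishing of $Q$ at $0$ equals that of $W$. Let $V=\mathrm{span}(R_{1\ell},\dots,R_{n\ell})$ and let $0\le\mu_1<\mu_2<\dots<\mu_n$ be its vanishing sequence at $0$, i.e.\ the orders of vanishing at $0$ of the nonzero elements of $V$; fix a basis $g_1,\dots,g_n$ of $V$ with $\mathrm{ord}_0 g_i=\mu_i$. Replacing $R_{1\ell},\dots,R_{n\ell}$ by $g_1,\dots,g_n$ multiplies $W$ by a nonzero constant, and in the Leibniz expansion of the Wronskian determinant a typical term $\mathrm{sgn}(\sigma)\prod_i g_i^{(\sigma(i))}$ (with $\sigma$ a bijection $\{1,\dots,n\}\to\{0,\dots,n-1\}$) vanishes at $0$ to order at least $\sum_i(\mu_i-\sigma(i))=\sum_i\mu_i-\tfrac{n(n-1)}{2}$, using $\mathrm{ord}_0 g_i^{(k)}\ge\mu_i-k$. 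Hence $\mathrm{ord}_0 W\ge\sum_{i=1}^n\mu_i-\tfrac{n(n-1)}{2}$, and it remains to show $\sum_{i=1}^n\mu_i\ge n(n-1)$. For this I would prove that $V$ contains no element whose order of vanishing at $0$ is an odd integer $\le 2n-3$.

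This is where the hydrogenoid structure enters, through Lemma~\ref{lem:zeta_k}. Writing $\cR(r)=\sum_{j\ge0}\zeta_j r^j$, an element $g=c^T\cR\in V$ has Taylor coefficients $g_j=c^T\zeta_j$. By Lemma~\ref{lem:zeta_k}, $\zeta_{2k}=\sum_{j=0}^k\mu_j^{(k)}\cE^j\zeta_0$ with $\mu_k^{(k)}\neq0$ and $\zeta_{2k+1}=\sum_{j=0}^k\nu_j^{(k)}\cE^j\zeta_0$. Because $R_{m\ell}(0)\neq0$ for every $m$ (Assumption~\ref{assump:radial_PAW_function_cusp}, verified above for the hydrogenoid atom) and the eigenvalues $\epsilon_m$ are pairwise distinct, the family $\{\zeta_0,\cE\zeta_0,\dots,\cE^{n-1}\zeta_0\}$ — equal to $\mathrm{diag}(R_{m\ell}(0))$ times a Vandermonde matrix in the $\epsilon_m$ — is a basis of $\R^n$. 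Combined with the triangular relations above and $\mu_k^{(k)}\neq0$, this gives $\mathrm{span}\{\zeta_0,\zeta_2,\dots,\zeta_{2k}\}=\mathrm{span}\{\zeta_0,\cE\zeta_0,\dots,\cE^k\zeta_0\}$ for each $0\le k\le n-1$, so in particular $\zeta_{2k+1}\in\mathrm{span}\{\zeta_0,\zeta_2,\dots,\zeta_{2k}\}$ for $k\le n-2$. Consequently, if $g_0=g_2=\dots=g_{2k}=0$ with $k\le n-2$, then $g_{2k+1}=c^T\zeta_{2k+1}=0$ as well, so $g$ cannot have order of vanishing exactly $2k+1$. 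Thus $\{\mu_1,\dots,\mu_n\}\subset\Z_{\ge0}\setminus\{1,3,\dots,2n-3\}$, whose $n$ smallest elements are $0,2,4,\dots,2(n-1)$; hence $\mu_i\ge2(i-1)$, so $\sum_{i=1}^n\mu_i\ge n(n-1)$, which finishes the proof.

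The main obstacle is this last step: extracting from the algebra of Lemma~\ref{lem:zeta_k} the geometric fact that $V$ vanishes only to even orders at $0$ below degree $2n-2$ — equivalently, that each odd-indexed Taylor vector $\zeta_{2k+1}$ lies in the span of the even-indexed ones $\zeta_0,\dots,\zeta_{2k}$ — and then matching the resulting lower bound $\mathrm{ord}_0 W\ge n(n-1)/2$ against the a priori upper bound $\deg Q\le n(n-1)/2$. The degree bound, the linear independence of the $R_{m\ell}$, and the Leibniz estimate on $\mathrm{ord}_0 W$ are routine.
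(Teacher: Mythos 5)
Your proposal is correct, and it takes a genuinely different route from the paper's. The paper argues purely algebraically from the explicit Laguerre form: it expands $R_{m\ell}^{(k)}(r)$ as $e^{-Zr/m}\left(\tfrac{Z}{m}\right)^k$ times a fixed triangular combination of the derivatives $L_{m-1}^{(j)}\!\left(\tfrac{2Zr}{m}\right)$, and reduces invertibility to that of triangular matrices with nonzero diagonal together with a Vandermonde matrix in the distinct rates $Z/m$. You instead treat the matrix as a Wronskian: after stripping the exponentials you bound the polynomial part $Q$ by $\deg Q\leq n(n-1)/2$, and you force $\mathrm{ord}_0\,Q\geq n(n-1)/2$ by showing, via the parity/triangular structure of the Taylor coefficients in Lemma~\ref{lem:zeta_k} (which is not circular, since that lemma does not rely on Assumption~\ref{assump:cR_free_family}), that the span of the $R_{m\ell}$ admits only even vanishing orders at $0$ below $2n-2$; this pins the Wronskian down exactly as $C\,r^{n(n-1)/2}e^{-(\sum_m Z/m)r}$ with $C\neq0$. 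Your route is longer and imports the singular-expansion machinery, but it buys more: an exact formula for the determinant and a transparent explanation of why the hypothesis $r>0$ is needed — the matrix really is singular at $r=0$ (already for $n=2$ both rows at $r=0$ are proportional to $(1,-\tfrac{Z}{\ell+1})$ by the cusp relation $\zeta_1\parallel\zeta_0$), a feature the algebraic factorization does not display.

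Two points to tighten. First, the inference ``the $R_{m\ell}$ are linearly independent, hence $Q\not\equiv0$'' is not valid for general smooth functions (Wronskians of independent functions can vanish identically); it is legitimate here only because the $R_{m\ell}$ are analytic. Better: compute the coefficient of $r^{n(n-1)/2}$ in $Q$ directly. Since $\left(\tfrac{d}{dr}-\tfrac{Z}{m}\right)^k P_{m-1}$ has leading coefficient $\left(-\tfrac{Z}{m}\right)^k a_{m-1}$ with $a_{m-1}\neq0$, that top coefficient equals $\prod_m a_{m-1}\cdot\det\bigl(\bigl(-\tfrac{Z}{m}\bigr)^k\bigr)_{m,k}$, a nonzero multiple of a Vandermonde determinant; this removes the appeal to linear independence and shows $\deg Q=n(n-1)/2$ exactly. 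Second, when invoking Lemma~\ref{lem:zeta_k} you should note that for general $\ell$ the recurrence constants become $\tfrac{(j+1)(j+2+2\ell)}{2}$ and $\zeta_1=-\tfrac{Z}{\ell+1}\zeta_0$, but the only facts you use — triangularity, $\mu_k^{(k)}\neq0$, and $\zeta_{2k+1}\in\mathrm{span}\{\zeta_0,\dots,\zeta_{2k}\}$ — are unaffected (and, for that inclusion, the invertibility of the family $\{\mathcal{E}^j\zeta_0\}$ via Assumption~\ref{assump:radial_PAW_function_cusp} is not actually needed).
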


\begin{proof}
 We have
$$
R_{n\ell}^{(k)}(r) = \left( \frac{Z}{n} \right)^k e^{-\frac{Zr}{n}} \sumlim{j=0}{k} 2^j \tbinom{k}{j} L_{n-1}^{(j)}\left( \frac{2Zr}{n} \right).
$$
Let $\mathcal{P} = \left(L_{j}^{(k)}(\frac{2Zr}{j+1}) \right)_{0 \leq j,k \leq n-1}$, $\mathcal{M} = (2^j \binom{k}{j})_{0 \leq j,k \leq n-1}$ and $\mathcal{Z} = \left((\frac{Z}{k+1})^j\right)_{0 \leq j,k \leq n-1}$. One can check that
$$
(\cR_0, \dots, \cR_{n-1})=\mathcal{Z} \ \mathrm{diag}(e^{-Zr}, \dots, e^{-\frac{Zr}{n}}) \mathcal{P} \mathcal{M}.
$$
$\mathcal{P}$ and $\mathcal{M}$ are both triangular with no null entry on the diagonal. $\mathcal{Z}$ is a Vandermonde matrix, hence $(\cR_0, \dots, \cR_{n-1})$ is invertible. 
\end{proof}

\subsection{Lemmas related to PAW functions}

Let $P_k$, $k\in \N$ the polynomials defined by 
\begin{equation}
\label{eq:P_k}
P_k(t) = \frac{1}{2^k k!}(t^2-1)^k.
\end{equation} 
By definition, these polynomials form a basis of even polynomials and satisfy
$$
\begin{cases}
P_k^{(j)}(1) = 0, \quad 0 \leq j \leq k-1,\\
P_k^{(k)}(1) = 1.
\end{cases}
$$
Let $P$ be the vector $(P_0,\dots,P_{d-1})^T$ and $C_{r_c} \in \R^{n \times d}$ be the matrix such that 
\begin{equation}
\label{eq:C_r_c}
\widetilde{\cR}(t) = C_{r_c} P(\tfrac{t}{r_c}).
\end{equation}
The following lemma summarizes the main properties of the matrix $C_{r_c}$.

\begin{lem}
\label{lem:C_1C2}
Let $C_1 \in \R^{n \times n}$ and $C_2 \in R^{n \times (n-d)}$ be the matrices such that 
$$
\widetilde{\mathcal{R}}(r) = \Big( C_1 \ \Big| \ C_2 \Big) P(\tfrac{r}{r_c}).
$$
Moreover, $C_1$ is invertible, the norm of $C_1^{-1} C_2$ is uniformly bounded with respect to $r_c$ and 
$$
C_2^T C_1^{-T} e_0 = \cO(r_c).
$$
\end{lem}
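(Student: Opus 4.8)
Let me describe the plan; it is a close transposition of the one–dimensional arguments of \cite{blanc2017vpaw1d,blanc2017paw}. The idea is to encode the Hermite matching conditions that define $\widetilde{\cR}$ as a factorization of $C_{r_c}$ through a fixed triangular matrix. Differentiating $\widetilde{\cR}(r)=C_{r_c}P(r/r_c)$ and imposing that $\widetilde R_{k\ell}$ agree with $R_{k\ell}$ together with its first $d-1$ derivatives at $r_c$ gives, after multiplying the $m$-th equation by $r_c^{m}$, the identity $C_{r_c}\,\mathcal{P}^{T}=\Theta_{r_c}$, where $\mathcal{P}:=\bigl(P_j^{(m)}(1)\bigr)_{0\le m,j\le d-1}$ is $r_c$-independent and $\Theta_{r_c}:=\bigl(r_c^{m}R_{k\ell}^{(m)}(r_c)\bigr)_{1\le k\le n,\;0\le m\le d-1}$. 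Because $(t^{2}-1)^{j}$ vanishes to order $j$ at $t=1$ and $P_j^{(j)}(1)=1$, the matrix $\mathcal{P}$ is lower triangular with unit diagonal, and its first column is $e_0$ since $P_0\equiv 1$; hence $\mathcal{P}$ is invertible, $C_{r_c}=\Theta_{r_c}\mathcal{P}^{-T}$, and $\mathcal{P}^{-T}$ is upper triangular with unit diagonal and first row $e_0^{T}$.

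The invertibility of $C_1$ is then immediate: the first $n$ columns of $\mathcal{P}^{-T}$ vanish in rows $n,\dots,d-1$ (upper triangularity), so $C_1$ equals the first $n$ columns $\Theta_{r_c}^{(1)}:=\bigl(\cR(r_c)\,\big|\,r_c\cR'(r_c)\,\big|\,\cdots\,\big|\,r_c^{\,n-1}\cR^{(n-1)}(r_c)\bigr)$ of $\Theta_{r_c}$ times the inverse-transpose $\mathcal{P}_n^{-T}$ of the (invertible) top-left $n\times n$ block $\mathcal{P}_n$ of $\mathcal{P}$. Scaling column $m$ by $r_c^{-m}$, invertibility of $C_1$ is equivalent to linear independence of $\cR(r_c),\cR'(r_c),\dots,\cR^{(n-1)}(r_c)$, which is precisely Assumption~\ref{assump:cR_free_family}. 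This also shows $\|C_1^{-1}C_2\|$ and $C_2^{T}C_1^{-T}e_0$ can be rewritten intrinsically: writing $u_m:=r_c^{m}\cR^{(m)}(r_c)$ for the columns of $\Theta_{r_c}$, $\Theta_{r_c}^{(2)}:=(u_n\,|\,\cdots\,|\,u_{d-1})$ and $W_{r_c}:=(\Theta_{r_c}^{(1)})^{-1}\Theta_{r_c}^{(2)}$, one gets from $(\Theta_{r_c}^{(1)})^{-1}\Theta_{r_c}=(I_n\,|\,W_{r_c})$ and the block decomposition $\mathcal{P}^{-T}=(V_1\,|\,V_2)$ (splitting $V_2$ into its top $n$ rows $Q_1$ and bottom $d-n$ rows $Q_2$) that $C_1^{-1}C_2=\mathcal{P}_n^{T}(Q_1+W_{r_c}Q_2)$; and, using again that the first row of $\mathcal{P}^{-T}$ is $e_0^{T}$, that $C_2^{T}C_1^{-T}e_0=Q_2^{T}\bigl(u_n^{T}x,\dots,u_{d-1}^{T}x\bigr)^{T}$, where $x:=(\Theta_{r_c}^{(1)})^{-T}e_0$ is the unique vector with $\cR(r_c)^{T}x=1$ and $\cR^{(j)}(r_c)^{T}x=0$ for $1\le j\le n-1$. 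Since $\mathcal{P}_n,Q_1,Q_2$ are fixed, it remains to show, uniformly in $0<r_c<r_{\min}$, that (i) $\|W_{r_c}\|$ is bounded and (ii) $r_c^{m}\cR^{(m)}(r_c)^{T}x=O(r_c)$ for $n\le m\le d-1$.

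The only non-elementary point — and the main obstacle — is that $\Theta_{r_c}^{(1)}$ \emph{degenerates} as $r_c\to 0$: by the singular expansion of $\cR$ (Lemma~\ref{lem:zeta_k}), $\cR^{(m)}(r_c)\to m!\,\zeta_m$, and the Kato relation $\zeta_1=-Z\zeta_0$ makes the first two limit columns collinear, so $(\Theta_{r_c}^{(1)})^{-1}$ blows up; (i)–(ii) can hold only through an exact cancellation with the vanishing of the higher columns $u_n,\dots,u_{d-1}$ and of $\cR^{(m)}(r_c)^{T}x$. I would make this precise as follows. Repeated differentiation of the radial Schr\"odinger equation~\eqref{eq:radial_schrodinger} (a $2$nd-order system with coefficients that are scalar multiples of the identity up to the constant matrix $\cE$) gives $\cR^{(m)}(r)=a_m(r)\cR(r)+b_m(r)\cR'(r)$ with $\|a_m(r)\|=O(r^{-(m-1)})$, $\|b_m(r)\|=O(r^{-(m-2)})$, whose leading parts in powers of $1/r$ are scalar and whose $\cE$-dependent corrections are of strictly lower order in $1/r$; hence $u_m=(r_c^{m}a_m(r_c))u_0+(r_c^{m-1}b_m(r_c))u_1$ with $r_c^{m}a_m(r_c)$ and $r_c^{m-1}b_m(r_c)$ uniformly bounded (indeed $O(r_c)$). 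Feeding this ``propagation'' identity into the singular expansion $\cR(r)=\sum_j\zeta_j r^{j}+\xi$ with $\zeta_j=\mathrm{poly}_j(\cE)\zeta_0$, and using the simplicity of the eigenvalues $\epsilon_k$ of $\mathfrak{h}_\ell$ together with $R_{k\ell}(0)\neq0$ (Assumption~\ref{assump:radial_PAW_function_cusp}), so that $\zeta_0,\cE\zeta_0,\dots,\cE^{n-1}\zeta_0$ form a basis of $\R^n$, one determines the exact order of vanishing of $\det\Theta_{r_c}^{(1)}$ and of the relevant minors, and checks that the powers of $r_c$ lost on inverting $\Theta_{r_c}^{(1)}$ are exactly recovered from $\Theta_{r_c}^{(2)}$ (for (i)) and from the alignment of $e_0$ with the well-conditioned direction (for (ii)). The case $n=1$ is trivial, everything being scalar with $R_{1\ell}(0)\neq0$. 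Carrying out this bookkeeping carefully — the hard and somewhat technical part — is the three-dimensional analogue of the corresponding computation in \cite{blanc2017vpaw1d}.
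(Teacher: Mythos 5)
Your reduction is sound and, up to notation, reproduces the paper's starting point: the matching conditions give $C_{r_c}\mathcal{P}^{T}=\Theta_{r_c}$ with $\mathcal{P}=\bigl(P_j^{(m)}(1)\bigr)$ lower triangular with unit diagonal and first column $e_0$, which is exactly the recurrence \eqref{eq:recurrence_colonne} in matrix form; the identification $C_1=\Theta_{r_c}^{(1)}\mathcal{P}_n^{-T}$, the reduction of the invertibility of $C_1$ to Assumption~\ref{assump:cR_free_family}, and the formulas $C_1^{-1}C_2=\mathcal{P}_n^{T}(Q_1+W_{r_c}Q_2)$ and $C_2^{T}C_1^{-T}e_0=Q_2^{T}(u_n^{T}x,\dots,u_{d-1}^{T}x)^{T}$ are all correct. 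The genuine gap is that the two quantitative conclusions of the lemma --- uniform boundedness of $W_{r_c}=(\Theta_{r_c}^{(1)})^{-1}\Theta_{r_c}^{(2)}$ and $u_m^{T}x=\cO(r_c)$ for $m\geq n$ --- are precisely what you do not prove. Your diagnosis that $\Theta_{r_c}^{(1)}$ degenerates as $r_c\to 0$ (because $\zeta_1=-Z\zeta_0$, etc.) is correct and is indeed the heart of the matter, but the route you sketch does not close it: differentiating \eqref{eq:radial_schrodinger} gives $\cR^{(m)}=A_m(r)\cR+B_m(r)\cR'$ with $A_m,B_m$ \emph{matrix-valued} (polynomials in $\cE$) and both of size $\cO(r^{-(m-1)})$ (already $B_2=-2(\ell+1)/r$), not the scalar coefficients of sizes $\cO(r^{-(m-1)})$ and $\cO(r^{-(m-2)})$ you assert; consequently $u_m$ is not a combination of $u_0,u_1$ with bounded (let alone $\cO(r_c)$) scalar coefficients, and this ``propagation identity'' cannot by itself offset the blow-up of $(\Theta_{r_c}^{(1)})^{-1}$. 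The decisive step --- showing, via the structure of Lemma~\ref{lem:zeta_k}, Assumption~\ref{assump:radial_PAW_function_cusp} and the simplicity of the radial eigenvalues, that after rescaling each $\zeta_0,\cE\zeta_0,\dots$-component by its natural power of $r_c$ the limiting matrix is invertible, so that the columns $u_m$, $m\geq n$, pair boundedly with the dual vectors and pair at order $r_c$ with the $e_0$-direction --- is exactly the ``bookkeeping'' you defer, i.e.\ the content of the lemma rather than a technicality.

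For comparison, the paper's own proof never touches the radial ODE: it stays with the columns $c_j$, introduces the dual family $(g_j)$ of $(c_0,\dots,c_{n-1})$, asserts a bound on $\|g_j\|$ in negative powers of $r_c$, and combines it with $\|r_c^{j}\cR^{(j)}(r_c)\|=\cO(r_c^{j})$ and the recurrence \eqref{eq:recurrence_colonne} to obtain both conclusions in a few lines, using only Assumption~\ref{assump:cR_free_family}; neither Lemma~\ref{lem:zeta_k} nor Assumption~\ref{assump:radial_PAW_function_cusp} enters there. Your framing is compatible with that argument (and your observation that the degeneration of $\Theta_{r_c}^{(1)}$ is what makes any such bound delicate is well taken), but as submitted your proof stops where the difficulty begins, so it cannot be accepted as a proof of the two estimates stated in the lemma.
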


\begin{proof}
Let $C_{r_c}$ be the matrix $ \Big( C_1 \ \Big| \ C_2 \Big)$. Let $c_j$ be the columns of $C_{r_c}$. By continuity of $\widetilde{\cR}$ and of its derivatives at $r_c$, and by our choice of the polynomials $P_k$, the columns of $C_{r_c}$ satisfy
\begin{equation}
\label{eq:recurrence_colonne}
\forall \, 0 \leq j \leq d-1, \ c_j = {r_c}^j \cR^{(j)}(r_c) - \sumlim{k=0}{j-1} P_k^{(j)}c_k. 
\end{equation}
Hence $c_k$ is a linear combination of the vectors ${r_c}^j \cR^{(j)}(r_c)$ for $j \leq k$ with coefficients that are \emph{independent} of $r_c$. Moreover we can deduce that the transformation of $(c_j)_{0\leq j \leq n-1}$ to $({r_c}^j \cR^{(j)}(r_c))_{0\leq j \leq n-1}$ is invertible. 
If $r_c$ is sufficiently small, by Assumption \ref{assump:cR_free_family}, the family $({r_c}^j \cR^{(j)}(r_c))_{0\leq j \leq n-1}$ is linearly independent, thus we can define $(g_j)_{0\leq j \leq n-1}$ to be the dual family to $(c_j)_{0\leq j \leq n-1}$ (\emph{i.e.} $c_j^T g_k = \delta_{jk}$) and we have $\|g_j\| = \cO \left( \tfrac{1}{{r_c}^{n-1}} \right)$. 
Hence, using the recurrence \eqref{eq:recurrence_colonne}, we can show that the norm of $C^{-1}_1 C_2$ is uniformly bounded with respect to $r_c$. 

To prove $C_2^T C_1^{-T} e_0 = \cO(r_c)$, first notice that $C_1^{-T}e_0 = g_1$. Since $P_0$ is a constant polynomial, for $j \geq 1$, we have
\begin{equation*}
c_j = {r_c}^j \cR^{(j)}(r_c) - \sumlim{k=1}{j-1} P_k^{(j)}c_k. 
\end{equation*}
Thus for $j = n$, we have
$$
c_n^T g_0 = {r_c}^n \cR^{(n)}(r_c)^T g_0 - \sumlim{k=1}{j-1} P_k^{(j)}c_k^T g_0 = \cO(r_c),
$$
and by iteration, we can check that 
$$
\forall \, n \leq j \leq d-1, \ c_j^T g_0 = \cO(r_c).
$$
\end{proof}

\begin{lem}
\label{lem:norm_T}
We have
$$
\psh{\widetilde{p}}{f}^T (\cR(r) - \widetilde{\cR}(r)) = C_{r_c}^T (C_{r_c} G(P) C_{r_c}^T)^{-1} C_{r_c} \itg{0}{1}{\chi(t) P(t) f_{00}(r_ct) t^2}{t} \cdot \left( \begin{pmatrix}
C_1^{-1} \\ 0
\end{pmatrix} \cR(r) - P(\tfrac{r}{r_c}) \right),
$$
with $P$ being the vector of the polynomials $P_k$ defined in \eqref{eq:P_k}, $C_{r_c}$ the matrix of coefficients of $\widetilde{\cR}$ in the basis $(P_k)$ given in \eqref{eq:C_r_c} and $G(P)$ the matrix 
\[
G(P) = \itg{0}{1}{\chi(t) P(t) P(t)^T t^2}{t}.
\]
The norm of the matrix $C_{r_c}^T (C_{r_c} G(P) C_{r_c}^T)^{-1} C_{r_c}$ is uniformly bounded as $r_c$ goes to 0 and we have
\[
\left\| \begin{pmatrix}
C_1^{-1} \\ 0
\end{pmatrix} \cR(r) \right\|_{L^\infty(0,r_c)} \leq C \quad \text{ and } \quad \left\| \begin{pmatrix}
C_1^{-1} \\ 0
\end{pmatrix} \cR'(r) \right\|_{L^\infty(0,r_c)} \leq \frac{C}{r_c}.
\]
\end{lem}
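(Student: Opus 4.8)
The plan is to obtain the closed formula by a direct computation, and then to prove the two uniform bounds separately. For the formula: since for $\ell=0$ each projector is $\widetilde p_k(\mathbf r)=p_k(r)Y_{00}(\hat{\mathbf r})$, the $3$D integral collapses, $\psh{\widetilde p_k}{f}=\itg{0}{{r_c}}{p_k(r)f_{00}(r)r^2}{r}$ for $f\in L^2_\mathrm{per}(\Gamma)$; with $p(r)=B^{-1}\chi(r)\widetilde\cR(r)$ and $B:=\itg{0}{{r_c}}{\chi(r)\widetilde\cR(r)\widetilde\cR(r)^T r^2}{r}$ (the $\ell=0$ Gram matrix of Section~\ref{sec:vpaw_method}) this reads $\psh{\widetilde p}{f}=B^{-1}\itg{0}{{r_c}}{\chi(r)\widetilde\cR(r)f_{00}(r)r^2}{r}$. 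Inserting $\widetilde\cR(r)=C_{r_c}P(\tfrac r{r_c})$ and rescaling $r=r_ct$ turns this into $B=r_c^{3}C_{r_c}G(P)C_{r_c}^T$ and $\psh{\widetilde p}{f}=(C_{r_c}G(P)C_{r_c}^T)^{-1}C_{r_c}I_f$ with $I_f:=\itg{0}{1}{\chi(t)P(t)f_{00}(r_ct)t^2}{t}$. Setting $M:=C_{r_c}^T(C_{r_c}G(P)C_{r_c}^T)^{-1}C_{r_c}$ (symmetric, since $G(P)$ is), one has $\psh{\widetilde p}{f}^T\widetilde\cR(r)=I_f^TMP(\tfrac r{r_c})$; and since $C_{r_c}=(C_1\mid C_2)$ gives $C_{r_c}\begin{pmatrix}C_1^{-1}\\0\end{pmatrix}\cR(r)=\cR(r)$, also $\psh{\widetilde p}{f}^T\cR(r)=I_f^TM\begin{pmatrix}C_1^{-1}\\0\end{pmatrix}\cR(r)$. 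Subtracting and rewriting $I_f^TM\,v=MI_f\cdot v$ yields the announced identity.

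\emph{The bound on $M$.} Factor $G(P)=LL^T$ with $L$ fixed and invertible; $G(P)$ is symmetric positive definite uniformly in $r_c$, because $(P_k)$ is a basis of even polynomials and $\chi(t)t^2>0$ on an interval. Then $C_{r_c}G(P)C_{r_c}^T=(C_{r_c}L)(C_{r_c}L)^T$ and $M=L^{-T}\Pi\,L^{-1}$, where $\Pi:=(C_{r_c}L)^T\bigl((C_{r_c}L)(C_{r_c}L)^T\bigr)^{-1}(C_{r_c}L)$ is the Euclidean orthogonal projection onto the row space of $C_{r_c}L$; hence $\|\Pi\|=1$ and $\|M\|\le\|L^{-1}\|^2=\|G(P)^{-1}\|$, a constant independent of $r_c$.

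\emph{The bound on $\begin{pmatrix}C_1^{-1}\\0\end{pmatrix}\cR$.} Put $a(r):=C_1^{-1}\cR(r)\in\R^n$. Matching $\widetilde\cR=C_{r_c}P(\cdot/r_c)$ with $\cR$ up to order $d-1$ at $r_c$ gives $C_{r_c}P^{(m)}(1)=r_c^m\cR^{(m)}(r_c)$; since $P_k^{(m)}(1)=0$ for $k>m$, for $m\le n-1$ the vector $P^{(m)}(1)$ lives in $\R^n$, so $a^{(m)}(r_c)=r_c^{-m}P^{(m)}(1)=\cO(r_c^{-m})$. Moreover $\|C_1^{-1}\|=\cO(r_c^{-(n-1)})$ (its rows are the dual family $g_j$ of Lemma~\ref{lem:C_1C2}, with $\|g_j\|=\cO(r_c^{-(n-1)})$), while $\cR$ is smooth near $0$ by \eqref{eq:cR_zeta_expansion}, so $\sup_{[0,r_c]}\|\cR^{(n)}\|$ is bounded uniformly. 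Taylor-expanding $a$ at $r_c$ to order $n-1$ with integral remainder and using $|r-r_c|\le r_c$ on $[0,r_c]$: each term $\tfrac{(r-r_c)^m}{m!}a^{(m)}(r_c)$ is $\cO(1)$ and the remainder $\tfrac1{(n-1)!}\itg{r_c}{r}{(r-s)^{n-1}C_1^{-1}\cR^{(n)}(s)}{s}$ is $\cO(r_c)$ (since $\|C_1^{-1}\cR^{(n)}\|=\cO(r_c^{-(n-1)})$ and the kernel integrates to $\cO(r_c^n)$), giving $\|a\|_{L^\infty(0,r_c)}\le C$. The same expansion applied to $a'$, now with dominant term $m=0$ and $\|a'(r_c)\|=\cO(r_c^{-1})$, gives $\|a'\|_{L^\infty(0,r_c)}\le C/r_c$.

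The hard part is this last estimate: $C_1^{-1}$ on its own blows up like $r_c^{-(n-1)}$ as $r_c\to0$, so boundedness of $C_1^{-1}\cR(r)$ on $[0,r_c]$ is genuinely non-trivial. It is salvaged only by the precise derivative scaling $a^{(m)}(r_c)=\cO(r_c^{-m})$, which the shrinking interval length $|r-r_c|\le r_c$ exactly offsets term by term in the Taylor expansion.
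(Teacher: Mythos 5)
Your proof is correct and follows essentially the same route as the paper: the identity is the same definition-chasing computation (which the paper leaves implicit), the uniform bound on $C_{r_c}^T (C_{r_c} G(P) C_{r_c}^T)^{-1} C_{r_c}$ is the paper's projector argument with $G(P)^{1/2}$ replaced by a Cholesky factor, and the $L^\infty$ bounds come from the same Taylor expansion of $\cR$ at $r_c$, your exact identity $C_1^{-1} r_c^m \cR^{(m)}(r_c) = [P^{(m)}(1)]_{1,\dots,n}$ being precisely the content the paper extracts from Lemma~\ref{lem:C_1C2}. If anything, you are slightly more explicit than the paper in treating the Taylor remainder via $\|C_1^{-1}\| = \cO(r_c^{-(n-1)})$ and the boundedness of $\cR^{(n)}$ near the origin.
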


\begin{proof}
Since $G(P)$ is positive-definite, $G(P)^{1/2}$ exists. Note that the matrix \linebreak $G(P)^{1/2} C_{r_c}^T (C_{r_c} G(P) C_{r_c}^T)^{-1} C_{r_c} G(P)^{1/2}$ is symmetric and is a projector, hence its norm is independent of $r_c$. 

Writing down the Taylor expansion of $\cR$ at $r_c$, we obtain
\begin{align*}
\cR(r) & =  \sumlim{k=0}{n-1} \frac{(r-r_c)^k}{k!} \cR^{(k)}(r_c) + \cO((r-r_c)^n) \\
& =  \sumlim{k=0}{n-1} \frac{1}{k!} \left( \frac{r}{r_c}-1 \right)^k {r_c}^k \cR^{(k)}(r_c) + \cO((r-r_c)^n). 
\end{align*}
By Lemma \ref{lem:C_1C2}, $\| C_1^{-1} {r_c}^k \cR^{(k)}(r_c)\|$ is uniformly bounded as $r_c$ goes to 0, which concludes the proof of the lemma.
\end{proof}

\begin{lem}
\label{lem:tilde_p_stuff}
Let $f \in L^2(\Gamma)$. Let $\ell \geq 0$, $|m| \leq \ell$ be integers. Let $n$ be the number of PAW functions associated to the angular momentum $\ell,m$ for a cut-off radius $r_c$. There exists a constant independent of $r_c$ and $f$ such that
$$
|\psh{\widetilde{p}}{f}^T \cR(0)| \leq \frac{C}{{r_c}^{3/2}} \|f\|_{L^2(B_{r_c})}.
$$
\end{lem}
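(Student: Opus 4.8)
The plan is to reduce $\psh{\widetilde p}{f}$ to an explicit finite-dimensional expression via the rescaling $r = {r_c}t$ already used in the proof of Lemma~\ref{lem:norm_T}, and then to bound each resulting factor by a quantity independent of ${r_c}$. We detail the case $\ell=m=0$ (the one needed for Theorem~\ref{theo:energy}); the general case is analogous. Writing $\widetilde p_k = p_k Y_{00}$ with $p_k = \sum_{k'}(B^{-1})_{kk'}\chi \widetilde R_{k'}$ and carrying out the angular integration gives $\psh{\widetilde p_k}{f} = \itg{0}{{r_c}}{r^2 p_k(r)\, f_{00}(r)}{r}$, where $f_{00}$ is the spherical average from \eqref{eq:def_f_ellm}. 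Inserting $\widetilde{\cR}(r) = C_{r_c}P(r/{r_c})$ and substituting $r = {r_c}t$ turns the Gram matrix into $B = {r_c}^3\, C_{r_c} G(P) C_{r_c}^T$ and yields
\begin{equation*}
\psh{\widetilde p}{f} = \big(C_{r_c} G(P) C_{r_c}^T\big)^{-1} C_{r_c}\, \mathbf w, \qquad \mathbf w := \itg{0}{1}{t^2 \chi(t) P(t)\, f_{00}({r_c}t)}{t}\in\R^d,
\end{equation*}
the two factors ${r_c}^3$ cancelling. Moreover $\cR(0) = C_{r_c}\mathbf z$ with $\mathbf z := \begin{pmatrix} C_1^{-1}\cR(0) \\ 0 \end{pmatrix}$, since $C_{r_c} = \Big(C_1 \,\Big|\, C_2\Big)$ with $C_1$ invertible (Lemma~\ref{lem:C_1C2}), so that $\psh{\widetilde p}{f}^T \cR(0) = \mathbf w^T M\, \mathbf z$ with $M := C_{r_c}^T (C_{r_c}G(P)C_{r_c}^T)^{-1} C_{r_c}$.

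It then remains to bound $\|\mathbf w\|$, $\|M\|$ and $\|\mathbf z\|$ uniformly in ${r_c}$ and $f$. For $\mathbf w$, Cauchy--Schwarz in $t$ gives componentwise $|\mathbf w_j| \le \|t\,\chi P_j\|_{L^2(0,1)}\big(\itg{0}{1}{t^2 f_{00}({r_c}t)^2}{t}\big)^{1/2}$; the substitution $s = {r_c}t$ rewrites the last integral as ${r_c}^{-3}\itg{0}{{r_c}}{s^2 f_{00}(s)^2}{s}$, and Cauchy--Schwarz on $S(0,1)$ with $\|Y_{00}\|_{L^2(S(0,1))}=1$ gives $|f_{00}(s)|^2 \le \int_{S(0,1)} |f(s\hat{\mathbf r})|^2\,\mathrm d\hat{\mathbf r}$, hence $\itg{0}{{r_c}}{s^2 f_{00}(s)^2}{s} \le \|f\|_{L^2(B_{r_c})}^2$ and $\|\mathbf w\| \le C{r_c}^{-3/2}\|f\|_{L^2(B_{r_c})}$ with $C$ depending only on $\chi$ and $d$. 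For $M$, Lemma~\ref{lem:norm_T} already records that its norm is bounded uniformly in ${r_c}$ (because $G(P)^{1/2} M G(P)^{1/2}$ is an orthogonal projector and $G(P)$ is a fixed positive-definite matrix). For $\mathbf z$, Lemma~\ref{lem:norm_T} states that $\left\| \begin{pmatrix} C_1^{-1} \\ 0 \end{pmatrix}\cR \right\|_{L^\infty(0,{r_c})}$ is bounded independently of ${r_c}$; evaluating at $r=0$ bounds $\|\mathbf z\|$.

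Putting these together, $\big| \psh{\widetilde p}{f}^T \cR(0) \big| = |\mathbf w^T M \mathbf z| \le \|\mathbf w\|\,\|M\|\,\|\mathbf z\| \le C{r_c}^{-3/2}\|f\|_{L^2(B_{r_c})}$, which is the claim. The argument is elementary once the rescaled formula is in place, so the only thing requiring care is the bookkeeping: one must confirm that every power of ${r_c}$ generated by $r = {r_c}t$ cancels inside $\psh{\widetilde p}{f}$, so that the surviving ${r_c}^{-3/2}$ arises purely from comparing $\|t\,f_{00}({r_c}\cdot)\|_{L^2(0,1)}$ with $\|f\|_{L^2(B_{r_c})}$, and that it is $\cR(0)$ --- not $\widetilde{\cR}(0)$ --- that must be placed in the column span of $C_{r_c}$ with a preimage bounded uniformly in ${r_c}$; both points are supplied by Lemmas~\ref{lem:C_1C2} and~\ref{lem:norm_T}.
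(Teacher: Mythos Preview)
Your proof is correct and is considerably shorter than the paper's. You factor $\psh{\widetilde p}{f}^T\cR(0) = \mathbf w^T M\,\mathbf z$ with $M = C_{r_c}^T(C_{r_c}G(P)C_{r_c}^T)^{-1}C_{r_c}$ and then invoke Lemma~\ref{lem:norm_T} for the uniform bounds on $\|M\|$ and $\|\mathbf z\|$, leaving only the Cauchy--Schwarz estimate on $\|\mathbf w\|$ to produce the factor $r_c^{-3/2}$. The paper instead works in a \emph{different} polynomial basis $Q_k$ (dual to monomials under the weight $\chi(t)t^2$), rewrites $\psh{\widetilde p}{f}$ through the matrix $G_{r_c} = \int_0^1\chi(t)Q(t)\cR(r_ct)^Tt^2\,\mathrm dt$ involving $\cR$ rather than $\widetilde\cR$, and then performs a careful expansion of $G_{r_c}$ via Lemma~\ref{lem:zeta_k} to exhibit the explicit leading structure of $(C_{r_c}^{(Q)})^T(C_{r_c}^{(Q)}G_{r_c})^{-T}\cR(0)$. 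That route also invokes Assumption~\ref{assump:radial_PAW_function_cusp} (so that $(\cE^j\zeta_0)_{0\le j\le n-1}$ is a basis), which your argument avoids. The payoff of the paper's longer computation is not for this lemma itself: the auxiliary objects $\mathcal H$, $\mathcal A$, and the $Q$-basis are reused immediately afterwards in the proof of Lemma~\ref{lem:d-th der jump}, where the \emph{direction} of the leading term (the $e_0$-component structure) matters, not just its size. For the bare statement of Lemma~\ref{lem:tilde_p_stuff}, your projector-based argument is the cleaner one.
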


\begin{proof}
The proof of this lemma is similar to the proof of Proposition 3.2 in \cite{blanc2017vpaw1d}. 
We give the proof in case $\ell=m =0$. First, it is possible to show that 
\begin{equation}
\label{eq:tildep_f}
\psh{\widetilde{p}}{f} = \itg{0}{1}{\chi(t) (C_{r_c}^{(Q)} G_{r_c})^{-1} C_{r_c}^{(Q)} Q(t) f(r_c t) t^2}{t},
\end{equation}
where 
$Q(t) = (Q_0(t),\dots,Q_{d-1}(t))^T$ is a vector of even polynomials which forms a basis of even polynomials of degree at most $2d-2$,
$$
\widetilde{\cR}(x) = C_{r_c}^{(Q)} Q( \tfrac{x}{r_c} ), 
$$
with $C_{r_c}^{(Q)} \in \R^{n \times d}$ and
$$
G_{r_c} = \itg{0}{1}{\chi(t)  Q(t) \mathcal{R}(r_c t)^T t^2}{t} \in \R^{d \times n}.
$$
By Lemma \ref{lem:zeta_k}, we have that $(\zeta_{2k})_{0 \leq k \leq n-1}$ and $(\zeta_{2k+1})_{0 \leq k \leq n-1}$ defined by the singular expansion of $\cR$ :
\begin{equation}
\label{eq:cR_singular_expansion}
\mathcal{R}(t) = \sumlim{k=0}{n-1} \zeta_{2k} t^{2k} + \zeta_{2k+1} t^{2k+1} + \eta_{2n}(t),
\end{equation}
satisfy
\begin{equation*}
\zeta_{2k}   = \sumlim{j=0}{k} \mu_j^{(k)} \cE^j \zeta_0 \quad \text{ and } \quad \zeta_{2k+1}   = \sumlim{j=0}{k} \nu_j^{(k)} \cE^j \zeta_0 ,
\end{equation*}
where $\mu_k^{(k)} \not= 0$ and $\cE$ is the diagonal matrix of the eigenvalues $(\epsilon_{1}, \dots, \epsilon_n)$. By Assumption \ref{assump:radial_PAW_function_cusp}, $\zeta_0$ has no null entry. The eigenvalues of the atomic operator \eqref{eq:atomic_hamiltonian} for a fixed $\ell,m$ are simple, hence $(\cE^j \zeta_0)_{0 \leq j \leq n-1}$ is a linearly independent family. Hence, $(\zeta_{2k})_{0 \leq k \leq n-1}$ is a basis of $\R^n$. Let $(h_k)$ be the dual basis to $({r_c}^{2j} \cE^j \zeta_0)_{0 \leq j \leq n-1}$, \emph{i.e.} $h_k^T {r_c}^{2j} \cE^j \zeta_0 = \delta_{kj}$. 

Injecting \eqref{eq:cR_singular_expansion} in the definition of $G_{r_c}$, we obtain
\begin{align}
G_{r_c}^T & = \itg{0}{1}{\chi(t) \cR(r_c t) Q(t)^T t^2}{t} \\
& = \itg{0}{1}{ \chi(t) \left( \sumlim{k=0}{n-1} {r_c}^{2k} \cE^k \zeta_0 \sumlim{j=k}{n-1} \mu_k^{(j)} {r_c}^{2j-2k} t^{2j} + r_c \sumlim{k=0}{n-1} {r_c}^{2k} \cE^k \zeta_0 \sumlim{j=k}{n-1} \nu_k^{(j)} {r_c}^{2j-2k} t^{2j+1} +\eta_{2n}(r_c t)  \right) Q(t)^T t^2}{t}.
\end{align}
By Lemma \ref{lem:cK_a_decay}, we have 
\begin{equation}
\left| \itg{0}{1}{\chi(t) \eta_{2n}(r_c t) Q(t)^T t^2}{t} \right| = \frac{1}{r_c^3} \left| \itg{0}{r_c}{\chi(\tfrac{t}{r_c}) \eta_{2n}(t) Q(\tfrac{t}{r_c})^T t^2}{t} \right| \leq C {r_c}^{2n}.
\end{equation}
Let $(Q_k)_{0 \leq k \leq d-1}$ be the even polynomials such that 
$$
\itg{0}{1}{\chi(t) t^{2j} Q_k(t) t^2}{t} = \delta_{jk}.
$$
Such polynomials exist since the Gram matrix $(\itg{0}{1}{\chi(t) t^{2j+2k+2}}{t})_{0\leq j,k \leq n-1}$ is invertible. Let 
$$
X_j = \itg{0}{1}{ \chi(t)  t^{2j+1} Q(t) t^2}{t},
$$
and 
\begin{equation}
\label{eq:mathcal_H}
\mathcal{H} = \begin{pmatrix}
h_0^T \\
\vdots \\
h_{n-1}^T
\end{pmatrix} \in \R^{n \times n}.
\end{equation}
Then denoting by $e_k$ the $k$-th canonical vector, we have
\begin{align*}
\mathcal{H}  G_{r_c}^T & = \sumlim{k=0}{n-1} \mu_k^{(k)} e_k e_k^T + \sumlim{k=0}{n-1} \sumlim{j=k+1}{n-1} \mu_k^{(j)} {r_c}^{2j-2k} e_k e_j^T + r_c \sumlim{k=0}{n-1} \nu_k^{(j)} {r_c}^{2j-2k} e_k X_j^T + \cO({r_c}^2) \\
& = \sumlim{k=0}{n-1} \mu_k^{(k)} e_k e_k^T + \cO({r_c}).
\end{align*}
Let 
\begin{equation}
\label{eq:mathcal_A}
\mathcal{A} = \sumlim{k=0}{n-1} \mu_k^{(k)} e_k e_k^T \in \R^{n \times n},
\end{equation}
and $\Pi$ the transition matrix such that
$$
C_{r_c}^{(Q)} = C_{r_c} \Pi,
$$
where $C_{r_c}$ is defined in \eqref{eq:C_r_c}. Hence we have 
\begin{align*}
(C_{r_c}^{(Q)})^T (C_{r_c}^{(Q)} G_{r_c})^{-T} \cR(0) &= (C_{r_c}^{(Q)})^T (C_{r_c}^{(Q)} G_{r_c})^{-T} \mathcal{H}^{-1} e_0 \\
& = (C_{r_c}^{(Q)})^T (C_{r_c}^{(Q)} G_{r_c} \mathcal{H}^T)^{-T} e_0 \\
& = \Pi^T \begin{pmatrix}
I_n \\ M^T + \cO(r_c)
\end{pmatrix} \left( C^{-1} C_{r_c} \Pi \left( \begin{pmatrix} \mathcal{A} \\ 0\end{pmatrix} + \cO(r_c)\right) \right)^{-T} e_0 \\
& =  \Pi^T \begin{pmatrix}
I_n \\ M^T + \cO(r_c)
\end{pmatrix} \left( \Big( \ I_n \Big| \ M + \cO(r_c) \Big) \Pi \left( \begin{pmatrix} \mathcal{A} \\ 0\end{pmatrix} + \cO(r_c) \right) \right)^{-T} e_0 ,
\end{align*}
where we used Lemma \ref{lem:C_1C2} in the third and fourth inequality. Decomposing $\Pi$ into four blocks 
\[
\Pi = \begin{pmatrix}
\Pi_1 & \Pi_2 \\
\Pi_3 & \Pi_4
\end{pmatrix}, \quad \text{with } \Pi_1 \in \R^{n \times n},
\]
we obtain 
\begin{align*}
(C_{r_c}^{(Q)})^T (C_{r_c}^{(Q)} G_{r_c})^{-T} \cR(0) &= \left( \begin{pmatrix}
\Pi_1^T + \Pi_3^T M^T \\
\Pi_2^T + \Pi_4^T M^T
\end{pmatrix} + \cO(r_c) \right) \left( \Pi_1 \mathcal{A} + M \Pi_3 \mathcal{A} + \cO(r_c) \right) e_0 \\
& = \begin{pmatrix}
\mathcal{A}^{-1} \\
(\Pi_2^T + \Pi_4^T M^T)(\Pi_1 + M \Pi_3)^{-1}
\end{pmatrix} e_0 + \cO(r_c).
\end{align*}
Hence $\| (C_{r_c}^{(Q)})^T (C_{r_c}^{(Q)} G_{r_c})^{-T} \cR(0)\|$ is uniformly bounded as $r_c$ goes to 0. Thus, there exists a constant $C$ independent of $r_c$ and $f$ such that :
\begin{align*}
\left| \psh{\widetilde{p}}{f}^T \cR(0) \right| & \leq C \itg{0}{1}{ |f(r_c t)| t^2}{t} \\
& \leq \frac{C}{{r_c}^{3/2}} \|f\|_{L^2_\mathrm{per}}.
\end{align*}
\end{proof}

We can now prove Lemma \ref{lem:d-th der jump}.

\begin{proof}[Proof of Lemma \ref{lem:d-th der jump}]
We start with the proof of the estimate of $\left[ \psh{\widetilde{p}}{\widetilde{\psi}}^T \widetilde{\cR}^{(d)}  \right]_{r_c}$.  We have using \eqref{eq:tildep_f} and \eqref{eq:C_r_c}
\begin{align}
\left[ \psh{\widetilde{p}}{\widetilde{\psi}}^T \widetilde{\cR}^{(d)}  \right]_{r_c} & = \psh{\widetilde{p}}{\widetilde{\psi}}^T (\cR^{(d)}(r_c) - \widetilde{\cR}^{(d)}(r_c)) \\ 
& = \frac{1}{{r_c}^d} \itg{0}{1}{\chi(t) \psi_{00}(r_c t)Q(t) t^2}{t} \cdot (C^{(Q)}_{r_c})^T (C^{(Q)}_{r_c} G_{r_c})^{-T} \left( {r_c}^d \cR^{(d)}(r_c) - C_{r_c} P^{(d)}(1)  \right) .
\end{align}

First, we prove that 
\begin{equation}
\label{eq:d-th-der_step1}
(C^{(Q)}_{r_c})^T (C^{(Q)}_{r_c} G_{r_c})^{-T} \left( {r_c}^d \cR^{(d)}(r_c) - C_{r_c} P^{(d)}(1) \right) = \begin{pmatrix}
0 \\
*
\end{pmatrix} + \cO(r_c),
\end{equation}
then 
\begin{equation}
\label{eq:d-th-der_step2}
\itg{0}{1}{\chi(t)\psi_{00}(r_ct) Q(t) t^2}{t} = \psi(0) e_0 + \cO(r_c).
\end{equation}

If both statements are true, then we deduce that there exists a constant $C$ independent of $r_c$ such that
$$
\left| \left[ \psh{\widetilde{p}}{\widetilde{\psi}}^T \widetilde{\cR}^{(d)}  \right]_{r_c} \right| \leq \frac{C}{{r_c}^{d-1}} .
$$

\paragraph{Step 1 (proof of \eqref{eq:d-th-der_step1})}

By \eqref{eq:cR_zeta_expansion}, we have for $0\leq j \leq 2n-1$ even
\begin{align*}
{r_c}^j \cR^{(j)}(r_c)& = \sumlim{k=j/2}{n-1} \zeta_{2k} \frac{(2k)!}{(2k-j)!} {r_c}^{2k} + \zeta_{2k+1} \frac{(2k+1)!}{(2k+1-j)!} {r_c}^{2k+1} + \cO({r_c}^{2n})\\ 
& = \sumlim{k=j/2}{n-1} \sumlim{\ell=0}{k} \mu_\ell^{(k)} {r_c}^{2\ell}\cE^\ell \zeta_0 \frac{(2k)!}{(2k-j)!} {r_c}^{2k-2\ell} +  r_c \sumlim{k=j/2}{n-1} \sumlim{\ell=0}{k} \nu_\ell^{(k)} {r_c}^{2\ell}\cE^\ell \zeta_0 \frac{(2k+1)!}{(2k+1-j)!} {r_c}^{2k-2\ell} + \cO({r_c}^{2n}),
\end{align*}
where we applied Lemma \ref{lem:size_weighted_sobolev_remainder} to estimate the remainder of the singular expansion.
By noticing that $\mathcal{H} {r_c}^{2j} \cE^j \zeta_0 = e_j$ with $\mathcal{H}$ defined in \eqref{eq:mathcal_H}, we have  
\[
(C^{(Q)}_{r_c})^T (C^{(Q)}_{r_c} G_{r_c})^{-T} {r_c}^j \cE^j \zeta_0 = \begin{pmatrix}
\mathcal{A}^{-1} \\ *
\end{pmatrix} e_j + \cO(r_c).
\]
Using $\|(C^{(Q)}_{r_c}) (C^{(Q)}_{r_c} G_{r_c})^{-1}\| = \cO \left(\frac{1}{{r_c}^{2n-2}}\right)$, we thus get 
\begin{align}
\label{eq:d-th-der-jump_even}
(C^{(Q)}_{r_c})^T (C^{(Q)}_{r_c} G_{r_c})^{-T} {r_c}^j \cR^{(j)}(r_c) & = \begin{pmatrix}
I_n \\ *
\end{pmatrix} \sumlim{k=j/2}{n-1}  e_k + \cO(r_c).
\end{align}

For $0\leq j \leq 2n-1$ odd, we have
$$
{r_c}^j \cR^{(j)}(r_c) = {r_c}^j \zeta_j + \sumlim{k=\frac{j+1}{2}}{n-1} \zeta_{2k} \frac{(2k)!}{(2k-j)!} {r_c}^{2k}+ \zeta_{2k+1} \frac{(2k+1)!}{(2k+1-j)!} {r_c}^{2k+1}+ \cO({r_c}^{2n}),
$$
similarly to the even case, we thus obtain 
\begin{align}
\label{eq:d-th-der-jump_odd}
(C^{(Q)}_{r_c})^T (C^{(Q)}_{r_c} G_{r_c})^{-T} {r_c}^j \cR^{(j)}(r_c) & = \begin{pmatrix}
\mathcal{A}^{-1} \\ *
\end{pmatrix} \sumlim{k=\frac{j+1}{2}}{n-1}  \nu_k^{(k)} e_k + \cO(r_c).
\end{align}

For $j \geq 2n$, using $\|(C^{(Q)}_{r_c}) (C^{(Q)}_{r_c} G_{r_c})^{-1}\| = \cO\left(\frac{1}{{r_c}^{2n-2}}\right)$, then 
\begin{equation}
\label{eq:d-th-der-jump_d_geq_2n}
(C^{(Q)}_{r_c})^T (C^{(Q)}_{r_c} G_{r_c})^{-T} {r_c}^j \cR^{(j)}(r_c) = \cO(r_c).
\end{equation}
From \eqref{eq:d-th-der-jump_even} (when $d \leq 2n-1$ and $d$ is even), \eqref{eq:d-th-der-jump_odd} (when $d \leq 2n-1$ and $d$ is odd) or \eqref{eq:d-th-der-jump_d_geq_2n} (when $d \geq 2n$), we have
$$
(C^{(Q)}_{r_c})^T (C^{(Q)}_{r_c} G_{r_c})^{-T} {r_c}^{d} \cR^{(d)}(r_c) = \begin{pmatrix}
0 \\
*
\end{pmatrix} + \cO(r_c).
$$
It remains to prove the same statement for the other part. By definition of the polynomials $P_k$ \eqref{eq:P_k}, we have
$$
P^{(d)}(1) = (\underbrace{0,\dots,0}_{\lfloor \frac{d}{2} \rfloor}, *, \dots,*)^T,
$$
so $C_{r_c} P^{(d)}(1) $ is a linear combination of the last $\lceil \frac{d}{2} \rceil$ columns of $C_{r_c}$. However, by Lemma \ref{lem:C_1C2}, we know that except the first column of $C_{r_c}$, the columns of $C_{r_c}$ do not depend on $\cR(r_c)$ and by \eqref{eq:d-th-der-jump_even} and \eqref{eq:d-th-der-jump_odd}, for $j \geq 1$, 
$$
e_0^T (C^{(Q)}_{r_c})^T (C^{(Q)}_{r_c} G_{r_c})^{-T} {r_c}^j \cR^{(j)}(r_c) = \cO(r_c),
$$
which finishes the proof of the first step. 

\paragraph{Step 2 (proof of \eqref{eq:d-th-der_step2})}

Since $\psi \in H^2_\mathrm{per}(\Gamma)$, by Sobolev embedding theorem, $\psi$ is continuous, hence $\psi(0)$ is finite. Thus
\begin{align*}
 \itg{0}{1}{\chi(t)\psi_{00}(r_ct) Q(t) t^2}{t}  & =  \itg{0}{1}{\chi(t)\left( \psi(0) + \itg{0}{r_ct}{ \psi_{00}'(u)}{u} \right) Q(t) t^2}{t}  \\
& = \psi(0) e_0 + \itg{0}{1}{\chi(t)  \itg{0}{r_ct}{ \psi_{00}'(u)}{u} \, Q(t) t^2}{t},
\end{align*}
by definition of the polynomials $Q_k$. 

We have ($C$ denotes a constant independent of $r_c$)
\begin{align*}
\left| \itg{0}{1}{\chi(t)  \itg{0}{r_ct}{ \psi_{00}'(u)}{u} \ Q(t) t^2}{t} \right| & \leq \left( \itg{0}{1}{\chi(t)^2   Q(t)^2 t^4}{t} \itg{0}{1}{ \left(\itg{0}{r_ct}{ \psi_{00}'(u)}{u} \right)^2 }{t} \right)^{1/2} \\
& \leq C r_c \left( \itg{0}{1}{ \left( \frac{1}{u} \psi_{00}' \right)^2 u^2}{u} \right)^{1/2} .
\end{align*}
Using Hardy inequality, we get
\begin{align*}
\left| \itg{0}{1}{\chi(t)  \itg{0}{r_ct}{ \psi_{00}'(u)}{u} \  Q(t) t^2}{t} \right| & \leq C r_c \|\psi\|_{H^2_\mathrm{per}}, 
\end{align*}
which ends the proof of \eqref{eq:d-th-der_step2}.
\newline

The proof of the bound on $\psh{\widetilde{p}}{\widetilde{\psi}}^T (\cR^{(k)}-\widetilde{\cR}^{(k)})$ is a direct extension of the proof of \eqref{eq:d-th-der_step1}.
\end{proof}

\bibliographystyle{siam}
\bibliography{VPAW-3D}

\end{document}